\theoremstyle{plain}
\newtheorem{theorem}{Theorem}[section]
\newtheorem{lemma}[theorem]{Lemma}
\newtheorem{proposition}[theorem]{Proposition}
\newtheorem{Bounded Diameter Lemma}[theorem]{Bounded Diameter Lemma}
\theoremstyle{definition}
\newtheorem{definition}[theorem]{Definition}
\newtheorem{remark}[theorem]{Remark}
\newcommand{\Hmm}[1]{\leavevmode{\marginpar{\tiny%
$\hbox to 0mm{\hspace*{-0.5mm}$\leftarrow$\hss}%
\vcenter{\vrule depth 0.1mm height 0.1mm width \the\marginparwidth}%
\hbox to
0mm{\hss$\rightarrow$\hspace*{-0.5mm}}$\\\relax\raggedright #1}}}
\DeclareFixedFont{\Acknowledgment}{OT1}{cmr}{bx}{n}{14pt}
\begin{document}
%\pagewiselinenumbers
\numberwithin{equation}{section}

%\title{Regularity of Monge-Amp\`ere equation for $(n-1)$-PSH functions on Hermitian manifolds%\renewcommand{\thefootnote}{\fnsymbol{footnote}}\footnote{text overlap with the second parts of arXiv:2001.09238, 2106.14837.}
%}
 \title{The Dirichlet problem for Monge-Amp\`ere equation for $(n-1)$-PSH functions on Hermitian manifolds}
\author{Rirong Yuan\thanks{
		 School of Mathematics, \,South China University of Technology, \,Guangzhou 510641, \,China \\ 	\indent 	\indent 
		Email address:\,yuanrr@scut.edu.cn 
% \,rirongyuan@stu.xmu.edu.cn
}
}
\date{}

\maketitle

\begin{abstract}

	We solve the Dirichlet problem for Monge-Amp\`ere equation for $(n-1)$-PSH functions possibly with degenerate right-hand side, through deriving a quantitative version of boundary estimate under the assumption of  $(n-1)$-PSH subsolutions.  
	  In addition, we confirm the subsolution assumption
	  on a product of a closed balanced manifold with a compact Riemann surface with boundary.
	   %Our results extend Sz\'ekelyhidi-Tosatti-Weinkove's work to complex manifolds with boundary.

%\medskip
%\noindent{\bf Mathematics Subject Classifications (2020):} 52C26, 51M10, 57M50.

\end{abstract}

%\tableofcontents

%\setcounter{section}{-1}

 \section{Introduction}
 
 A basic work in K\"ahler geometry is Yau's \cite{Yau78} %seminal 
 proof of Calabi's conjecture.
 In \cite{Gauduchon77}  Gauduchon showed that
 every closed Hermitian manifold $(M,J,\omega)$ of complex dimension $n\geq 2$
 admits a unique (up to rescaling)  Gauduchon metric
 %($\partial\overline{\partial}\omega^{n-1}=0$) Hence it imposes no assumption on $M.$
 %which is conformal to %the original one
in the conformal class $[\omega]$, and he furthermore
 conjectured in \cite{Gauduchon84} that the Calabi-Yau theorem %in the counterpart of 
 is also true for Gauduchon metric. 
 On  complex surfaces and astheno-K\"ahler manifolds %($\partial\overline{\partial}\omega^{n-2}=0$)  
 introduced by Jost-Yau \cite{Jost1993Yau},
 the Gauduchon conjecture was solved by Cherrier \cite{Cherrier1987} and Tosatti-Weinkove \cite{TW19}, respectively.
 The Gauduchon conjecture in higher dimensions 
 amounts to solving a Monge-Amp\`ere type equation 
 \begin{equation}
 	\label{MA-n-1}
 	\begin{aligned}
 		\left(\tilde{\chi}+\frac{1}{n-1}\left(\Delta u\omega-\sqrt{-1}\partial \overline{\partial}u\right)+Z\right)^n=\phi \omega^n
 		% \mbox{ in } M,
 \end{aligned}  \end{equation} 
 %\begin{equation}  \begin{aligned}
 % \,& \tilde{\chi}=\frac{1}{(n-1)!}* \omega_0^{n-1}, \,&  Z = \frac{1}{(n-1)!}*\mathfrak{Re}(\sqrt{-1}\partial u\wedge \overline{\partial}(\omega^{n-2})),  \nonumber
 %\end{aligned}  \end{equation}
 where  
 $$\tilde{\chi}=\frac{1}{(n-1)!}* \omega_0^{n-1}, \quad 
 Z = \frac{1}{(n-1)!}*\mathfrak{Re}\left(\sqrt{-1}\partial u\wedge \overline{\partial}(\omega^{n-2})\right),$$   see  \cite{Popovici2015,TW19}. Here $*$ is the Hodge star operator with respect to $\omega$ and $\omega_0$ is a Gauduchon metric.  
 %When $n=2$, \eqref{MA-n-1} is exactly the standard complex Monge-Amp\`ere equation.
 Following \cite{Harvey2011Lawson}, also \cite{TW17}, we call $u$ an $(n-1)$-plurisubharmonic  ($(n-1)$-PSH in short) function if 
 \begin{equation}
 	\label{gamma-cone}
 	\begin{aligned}
 		\tilde{\chi}+\frac{1}{n-1}\left(\Delta u\omega-\sqrt{-1}\partial \overline{\partial}u\right)+Z\geq0
 		\mbox{ in }  M. \nonumber %\mbox{ (in } \bar M \mbox{ if } \partial M\neq\emptyset). \nonumber
 	\end{aligned}
 \end{equation}
Furthermore, we call $u$ a strictly $(n-1)$-PSH function if 
 $$\tilde{\chi}+\frac{1}{n-1}\left(\Delta u\omega-\sqrt{-1}\partial \overline{\partial}u\right)+Z>0 \mbox{ in } M.$$
 % (In $\bar M$ if $\partial M\neq\emptyset$).
 Geometrically,   %one can use
  the strictly $(n-1)$-PSH solution of
  \eqref{MA-n-1} allows one to construct 
 a Gauduchon metric, say $\Omega_u$, 
 with prescribed volume form
 \begin{equation}
 	\label{Prescribed-Volume-form1}
 	\begin{aligned}
 		 \Omega_u^{n}= \tilde{\phi}\omega^n  
 	\end{aligned}
 \end{equation}
and $\Omega_u^{n-1}=\omega_0^{n-1}+ \sqrt{-1}\partial \overline{\partial}u \wedge \omega^{n-2} + \mathfrak{Re}(\sqrt{-1}\partial u \wedge\overline{\partial}\omega^{n-2}),$
 % where the right-hand side of prescribed volume form is 
 where and hereafter 
 \begin{equation}
 	\label{tildephi-1}
 	\begin{aligned}
 		\tilde{\phi}=\phi^{{1}/{(n-1)}}. %\nonumber
 	\end{aligned}
 \end{equation}
 Recently, %based on their previous work \cite{Gabor,TW17,TW19}, 
 Sz\'ekelyhidi-Tosatti-Weinkove \cite{STW17} proved Gauduchon's conjecture for $n\geq3$, thereby extending the Calabi-Yau theorem to Gauduchon metric.

 \vspace{1mm}
 This paper is devoted to extending Sz\'ekelyhidi-Tosatti-Weinkove's results to complex manifolds with boundary.
 %a compact Hermitian manifold $(M,J,\omega)$ of complex dimension $n\geq 2$ with boundary $\partial M$, $\bar M:=M\cup\partial M$. Here  $J$ is the complex structure, and  $\omega= \sqrt{-1}  g_{i\bar j} dz^i\wedge d\bar z^j$ denotes the K\"ahler form.
 %To state our main results, 
 To this end, we assume that $(M,J,\omega)$ is a compact Hermitian manifold of complex dimension $n\geq 2$  with  boundary $\partial M$, $\bar M:=M\cup\partial M$, 
 where $J$ is the complex structure, $\omega= \sqrt{-1}  g_{i\bar j} dz^i\wedge d\bar z^j$ denotes the K\"ahler form. We consider the Dirichlet problem with prescribed boundary value condition
 \begin{equation}
 	\label{bdy-value-2}
 	\begin{aligned}
 		u=\varphi \mbox{ }\mbox{ on } {\partial M}.
 	\end{aligned}
 \end{equation}
 
Before stating our results, we summarize some notion.

\begin{definition}
	 A strictly $(n-1)$-PSH function $\underline{u}\in C^{2}(\bar M)$ is called a \textit{subsolution} of the Dirichlet problem \eqref{MA-n-1} and \eqref{bdy-value-2}, if 
 \begin{equation}
 	\label{subsolution-MA-n-1}
 	\begin{aligned}
 		\left(\tilde{\chi}+\frac{1}{n-1}\left(\Delta \underline{u}\omega-\sqrt{-1}\partial \overline{\partial}\underline{u}\right)+\underline{Z}\right)^n\geq \,& \phi \omega^n \,& \mbox{ in } M, \\
 		\underline{u}=\,& \varphi \,& \mbox{ on } \partial M. 
 	\end{aligned}
 \end{equation} 
We say a strictly $(n-1)$-PSH function $\underline{u}\in C^2(\bar M)$ is a \textit{strict subsolution} of the Dirichlet problem \eqref{MA-n-1} and \eqref{bdy-value-2}, if
\begin{equation}
	\label{strictly-subsolution-MA-n-1}
	\begin{aligned}
		\left(\tilde{\chi}+\frac{1}{n-1}\left(\Delta \underline{u}\omega-\sqrt{-1}\partial \overline{\partial}\underline{u}\right)+\underline{Z}\right)^n \geq\,& (\phi+\delta)\omega^n \,& \mbox{ in }  M \\
		\underline{u}=\,& \varphi \,& \mbox{ on } \partial M  %\nonumber
	\end{aligned}
\end{equation} 
 for a positive constant $\delta$.
Here $\underline{Z} = \frac{1}{(n-1)!}*\mathfrak{Re}(\sqrt{-1}\partial \underline{u}\wedge \overline{\partial}(\omega^{n-2}))$. 

\end{definition}

 %\subsection{Statement of main results}

  Our main results are stated as follows.
%We solve the Dirichlet problem in class of $(n-1)$-PSH functions.
 \begin{theorem}
 	\label{thm0-n-1-yuan3}
 	Let $(M,J,\omega)$ be a compact Hermitian manifold with smooth boundary.  
 	Assume the given data $\varphi\in C^{\infty}(\partial M)$ and 
 	$0<\phi\in C^\infty(\bar M)$ support an $(n-1)$-PSH  subsolution $\underline{u}\in C^{2,1}(\bar M)$ subject to \eqref{subsolution-MA-n-1}.
 	Then the Dirichlet problem \eqref{MA-n-1} and \eqref{bdy-value-2} 
 	is uniquely solvable in the class of smooth strictly $(n-1)$-PSH functions. 
 	%provided that 	there is a   $(n-1)$-PSH admissible subsolution $\underline{u}\in C^{2,1}(\bar M)$ satisfying \eqref{subsolution-MA-n-1}.
 	 
 \end{theorem}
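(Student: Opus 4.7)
My plan is to solve the Dirichlet problem \eqref{MA-n-1}--\eqref{bdy-value-2} by the standard continuity method, thereby reducing everything to a priori $C^{2,\alpha}(\bar M)$ estimates uniform along the deformation. I would connect the target datum $\phi\omega^n$ to a reference volume form by a path $\phi_t\omega^n$, $t\in[0,1]$, chosen so that $\underline u$ itself solves the equation at $t=0$ and \eqref{MA-n-1} is recovered at $t=1$. Openness along this path follows from the implicit function theorem, since the linearization at a smooth strictly $(n-1)$-PSH function is a linear second-order elliptic operator with Dirichlet data, invertible in $C^{2,\alpha}$ by Schauder theory. Uniqueness within the class of smooth strictly $(n-1)$-PSH solutions follows from a standard maximum-principle comparison argument based on ellipticity of the operator on the admissible cone.

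For the a priori estimates I proceed in the usual order. A $C^0$ bound is obtained by comparing $u$ with $\underline u$ from below via the comparison principle, and with a suitable supersolution (for instance the harmonic extension of $\varphi$ plus a quadratic corrector) from above. The gradient estimate reduces to its boundary value via a maximum principle applied to an auxiliary quantity of the form $|\nabla u|^2 e^{\psi(u)}$, with the boundary gradient then squeezed between local barriers built from $\underline u$ and a boundary defining function. Interior second-order estimates of the form $\sup_M|\sqrt{-1}\partial\overline{\partial}u|\le C(1+\sup_{\partial M}|\sqrt{-1}\partial\overline{\partial}u|)$ should follow from a maximum-principle calculation on the largest eigenvalue of the Hermitian endomorphism associated with $\tilde\chi+\frac{1}{n-1}(\Delta u\,\omega-\sqrt{-1}\partial\overline{\partial}u)+Z$, along the lines of Tosatti--Weinkove and Sz\'ekelyhidi--Tosatti--Weinkove, with the third-order torsion contributions absorbed using the strict $(n-1)$-plurisubharmonicity of $\underline u$.

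The main obstacle, and what I expect to be the technical heart of the argument, is the boundary $C^2$ estimate. At a point $p\in\partial M$ introduce local coordinates adapted to the boundary, with real inward normal $\partial_\nu$. The tangent-tangent block of $\nabla^2 u(p)$ is read off from the identity $u=\varphi$ on $\partial M$ by differentiating twice along $\partial M$. The mixed tangent-normal entries are controlled by a Guan--Spruck-style barrier: one builds a function of the form $v=A(\underline u-u)+B\rho-C|z|^2$ with $\rho$ a boundary defining function, verifies that $v$ is a supersolution of the linearized operator on $M\cap B_r(p)$ while dominating $\pm T\partial_\nu(u-\underline u)$ on the boundary of this region, and concludes by the maximum principle. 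The most delicate step is the double-normal entry $\partial_\nu^2 u(p)$: reading it off from the PDE requires a \emph{quantitative} lower bound on the determinant of the tangential minor of the $(n-1)$-PSH endomorphism at $p$, and supplying such a bound from the strict $(n-1)$-plurisubharmonicity of $\underline u$ together with the inequality \eqref{subsolution-MA-n-1} is precisely where the quantitative boundary estimate advertised in the abstract enters. Once $|\nabla^2 u|$ is controlled up to $\partial M$, the Evans--Krylov theorem applied to a concave reformulation of the equation yields $C^{2,\alpha}(\bar M)$, and Schauder bootstrapping closes the continuity method.
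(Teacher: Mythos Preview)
Your overall architecture (continuity method, $C^0$ via comparison, interior $C^2$ via Sz\'ekelyhidi--Tosatti--Weinkove, boundary $C^2$ via barriers, then Evans--Krylov) is the right frame, and your description of the tangent-tangent and tangent-normal boundary steps is in line with what the paper actually does. However, there is a genuine gap at the gradient step, and it forces a reordering of the whole scheme.

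You propose to obtain the interior gradient estimate by a direct maximum-principle computation on $|\nabla u|^2 e^{\psi(u)}$, as in Hanani, B{\l}ocki, or Guan--Li for the ordinary complex Monge--Amp\`ere equation. The paper explicitly records that this is \emph{not known to work} for \eqref{MA-n-1}: the extra first-order term $Z$, coming from $\mathfrak{Re}(\sqrt{-1}\partial u\wedge\overline{\partial}\omega^{n-2})$, injects uncontrolled gradient terms into the linearized operator, and the usual auxiliary functions do not close. Consequently the paper does \emph{not} estimate $|\nabla u|$ before $\Delta u$. Instead it proves the boundary second-order estimate in the \emph{quantitative} form
\[
\sup_{\partial M}\Delta u \le C\bigl(1+\sup_M|\nabla u|^2\bigr),
\]
combines it with the interior STW estimate $\sup_M\Delta u\le C(1+\sup_M|\nabla u|^2+\sup_{\partial M}\Delta u)$ to obtain $\sup_M\Delta u\le C(1+\sup_M|\nabla u|^2)$, and only then extracts the gradient bound by a blow-up/Liouville argument \`a la Dinew--Ko{\l}odziej and Sz\'ekelyhidi. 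Your interior $C^2$ estimate, as stated, already presupposes a gradient bound and is therefore not usable in the order you give.

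This also sharpens what is required at the boundary. In your double-normal discussion you aim for a bounded constant; what is actually needed is that every constant in the mixed and double-normal estimates be tracked so that the final bound is $C(1+\sup_M|\nabla u|^2)$ with $C$ independent of $\sup_M|\nabla u|$. The paper achieves this for the mixed derivatives by scaling the barrier with $\sqrt{1+\sup_M|\nabla u|^2}$, and for the double-normal derivative by a quantitative eigenvalue-location lemma (Lemma~\ref{yuan's-quantitative-lemma}) applied to the matrix $\tilde{\mathfrak g}$ at a boundary point, together with a delicate upper barrier $w$ (Lemma~\ref{key-lemma-B2}) that controls $(1-t_0)^{-1}$. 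The ``lower bound on the tangential minor'' you allude to is exactly this step, but it must be carried out in quantitative form, and the presence of $Z$ makes the barrier construction substantially more involved than in the classical case.
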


We say the boundary is 
 \textit{mean pseudoconcave} if
 \begin{equation}
 	\label{mean-pseudoconcave1}
 	\begin{aligned}
 		-(\kappa_1+\cdots+\kappa_{n-1})\geq 0 \quad \mbox{ on } \partial M 
 	\end{aligned}
 \end{equation} 
where $\kappa_1,\cdots, \kappa_{n-1}$ are the eigenvalues of  Levi form $L_{\partial M}$
with respect to 
$\omega'=\omega|_{T_{\partial M}\cap JT_{\partial M}}$.
When the boundary is mean pseudoconcave we can solve the equation
with degenerate right-hand side $\phi\geq0$.

 \begin{theorem}
 	\label{thm2-n-1-yuan3}
 	Let $(M,J,\omega)$ be a compact Hermitian manifold with smooth mean pseudoconcave  boundary. Let $\varphi\in C^{2,1}(\partial M)$ and $0\leq\phi^{{1}/{n}}\in C^{1,1}(\bar M)$.  Suppose in addition that there exists a $C^{2,1}$-smooth $(n-1)$-PSH strict subsolution %satisfying
 	satisfying \eqref{strictly-subsolution-MA-n-1}.
 	Then the Dirichlet problem \eqref{MA-n-1} and \eqref{bdy-value-2}
 	admits a (weak) $(n-1)$-PSH solution $u\in C^{1,\alpha}(\bar M)$, $\forall 0<\alpha<1$,
 	with $\Delta u \in L^{\infty}(\bar M)$.
 	% \begin{equation} \label{n-1-PSH-de} \begin{aligned}
 	% \tilde{\chi}+\frac{1}{n-1}\left(\Delta u\omega-\sqrt{-1}\partial \overline{\partial}u\right)+Z\geq0 \mbox{ in } \bar M.
 	% \end{aligned} \end{equation}
 \end{theorem}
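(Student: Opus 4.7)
}
The plan is a standard approximation argument that deduces the degenerate result from Theorem \ref{thm0-n-1-yuan3} together with uniform a priori estimates that do \emph{not} deteriorate as the right-hand side degenerates. For $\epsilon>0$ small, set $\phi_\epsilon=\phi+\epsilon$ and regularize $\varphi$ to smooth $\varphi_\epsilon\to\varphi$ in $C^{2,1}(\partial M)$ such that $\underline{u}$ remains a strict subsolution with respect to the data $(\phi_\epsilon,\varphi_\epsilon)$; this is possible because \eqref{strictly-subsolution-MA-n-1} has slack $\delta>0$, so shrinking $\epsilon<\delta/2$ gives $(\tilde\chi+\tfrac{1}{n-1}(\Delta\underline{u}\,\omega-\sqrt{-1}\partial\bar\partial\underline{u})+\underline Z)^n\ge(\phi_\epsilon+\delta/2)\omega^n$, and the boundary values of $\underline u$ can be matched to $\varphi_\epsilon$ by a small perturbation that is negligible on the cone of strictly $(n-1)$-PSH functions. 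Theorem \ref{thm0-n-1-yuan3} then produces a smooth strictly $(n-1)$-PSH solution $u_\epsilon$ to the approximate Dirichlet problem.

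The core of the proof is to establish uniform (in $\epsilon$) a priori estimates on $u_\epsilon$. First, the $C^0$-bound follows from comparing $u_\epsilon$ with $\underline u$ from below and with harmonic-type barriers built from $\varphi_\epsilon$ from above, using the maximum principle for the linearized $(n-1)$-Monge--Amp\`ere operator. Second, I would establish a uniform gradient bound $\|\nabla u_\epsilon\|_{L^\infty(\bar M)}\le C$, combining an interior Blocki/Sz\'ekelyhidi-type maximum-principle argument for $|\nabla u|^2$ with a boundary gradient estimate obtained from barriers constructed from $\underline u$ and from the distance function to $\partial M$. Third, and decisively, I would derive a uniform bound
\begin{equation*}
\|\Delta u_\epsilon\|_{L^\infty(\bar M)}\le C
\end{equation*}
with $C$ independent of $\epsilon$. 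Interior second-order estimates for the $(n-1)$-Monge--Amp\`ere equation in this generality are available from \cite{STW17} (and its boundary-version counterpart developed for Theorem \ref{thm0-n-1-yuan3}), and they depend on $\phi_\epsilon^{1/(n-1)}$ only through its $C^{1,1}$ norm, which is controlled under the hypothesis $\phi^{1/n}\in C^{1,1}(\bar M)$.

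\textbf{The main obstacle} is the \emph{boundary} second-order estimate in the degenerate regime. The tangential and mixed tangential-normal estimates go through as in the proof of Theorem \ref{thm0-n-1-yuan3}, relying only on the strict subsolution $\underline u$ and on boundary barriers. The double normal second derivative is the delicate one: in the non-degenerate case it is controlled by inverting the equation, which costs a factor of $\phi_\epsilon^{-1/n}$ and blows up as $\epsilon\to 0$. Here I would instead exploit the mean pseudoconcavity \eqref{mean-pseudoconcave1}: writing the equation along $\partial M$ in a frame adapted to $T_{\partial M}\cap JT_{\partial M}$, the coefficient of the double normal derivative in the expansion of the determinant acquires a nonnegative sign from $-(\kappa_1+\cdots+\kappa_{n-1})\ge 0$, and this allows one to bound $u_{\nu\bar\nu}$ from above in terms of quantities already estimated, independently of a lower bound on $\phi_\epsilon$. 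This is the structural reason that mean pseudoconcavity replaces non-degeneracy.

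Once the $\epsilon$-uniform bound $\|\Delta u_\epsilon\|_{L^\infty}\le C$ is in hand, standard elliptic regularity (Evans--Krylov is not needed here) gives $u_\epsilon\in C^{1,\alpha}(\bar M)$ uniformly for every $\alpha\in(0,1)$. By Arzel\`a--Ascoli I extract a subsequence with $u_\epsilon\to u$ in $C^{1,\alpha'}(\bar M)$ for any $\alpha'<\alpha$; the limit $u$ satisfies $u=\varphi$ on $\partial M$, is $(n-1)$-PSH in the weak sense as a uniform limit of such functions, has $\Delta u\in L^\infty(\bar M)$ by the uniform bound, and solves \eqref{MA-n-1} in the weak (e.g.\ pluripotential / viscosity, or distributional via $\Delta u\in L^\infty$) sense by passing to the limit in $(\tilde\chi+\tfrac{1}{n-1}(\Delta u_\epsilon\omega-\sqrt{-1}\partial\bar\partial u_\epsilon)+Z_{u_\epsilon})^n=\phi_\epsilon\omega^n$, completing the proof. \qed
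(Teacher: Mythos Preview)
Your overall approximation framework is right, but there is a genuine gap in Step~2 (the gradient bound), and it forces the whole structure of the argument to change.

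\textbf{The gap.} You propose to obtain a uniform gradient bound on $u_\epsilon$ via a direct B{\l}ocki/Sz\'ekelyhidi-type maximum principle for $|\nabla u|^2$. For the equation \eqref{MA-n-1} this is explicitly open: because of the gradient terms coming from $Z$, no direct $C^1$ estimate is known (see the remark~(2) in the introduction and the opening of Section~\ref{sketchproof}). So Step~2 as written does not go through, and without it your Step~3 cannot be executed either, since your second-order argument presupposes a gradient bound.

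\textbf{What the paper actually does.} The order is reversed. One first proves the \emph{quantitative} second-order estimate
\[
\sup_{\bar M}\Delta u_\epsilon \le C\bigl(1+\sup_M|\nabla u_\epsilon|^2\bigr),
\]
with $C$ independent of $(\inf_M\phi_\epsilon)^{-1}$. The interior part is Theorem~\ref{STW-estimate1}; the boundary part is Theorem~\ref{thm-21-yuan}. In that boundary estimate, the role of mean pseudoconcavity is not what you describe (a sign in the expansion of the determinant). It enters in the pure-normal estimate of Proposition~\ref{proposition-quar-yuan2}: the key quantity $(1-t_0)^{-1}$ from \eqref{t0-2} is automatically $\le 1$ when $\sum_\alpha\sigma_{\alpha\bar\alpha}\ge 0$, so one avoids Lemma~\ref{key-lemma-B1}, which is the only place a dependence on $(\inf_M\phi)^{-1}$ would otherwise creep in. The tangential-normal estimate (Proposition~\ref{mix-general}) is already independent of $(\inf_M\phi)^{-1}$. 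Once the quantitative second-order bound is in hand, the gradient bound follows \emph{indirectly} from a blow-up argument and a Liouville-type theorem (as in \cite{STW17}, \cite{Dinew2017Kolo}), not from a maximum principle on $|\nabla u|^2$. After that, passing to the limit is as you describe.

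So: keep your approximation and limit steps, but replace ``gradient first, then Hessian'' by ``quantitative Hessian bound of the form $\Delta u\le C(1+|\nabla u|^2)$, then gradient by blow-up,'' and use the $(1-t_0)^{-1}$ mechanism for the way mean pseudoconcavity eliminates the dependence on $(\inf\phi)^{-1}$.
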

 
 As shown in the above theorem, we obtain a $C^{1,\alpha}$-smooth $(n-1)$-PSH weak solution for \eqref{MA-n-1}
 % and the prescribed volume form  \eqref{Prescribed-Volume-form1} as well,
 with right-hand side of the form $\phi=w^n$ for some $0\leq w\in C^{1,1}(\bar M)$.
 %which is natural for the study of equation \eqref{MA-n-1} within framework of concave elliptic equations.
 %since $P_{n-1}^{1/n}(\lambda)$ ($P_{n-1}$ is defined in \eqref{P_n-1}) is concave in $\mathcal{P}_{n-1}$.
 %While the assumption on the right-hand side of prescribed volume form 
 %$\tilde{\phi}=a^{n/(n-1)}$
% which  is, however, %rather
 However, it is fairly restrictive even in the case $n=2$.
 In some problems arising from complex geometry and analysis, 
 it would be interesting 
 %to use $C^{1,\alpha}$-smooth $(n-1)$-PSH  functions 
 to build up 
 %the prescribed volume form %\eqref{Prescribed-Volume-form1} 
 %an $(n,n)$-form with 
 $\Omega_u^{n}= \tilde{\phi}\omega^n$ %("a volume form Gauduchon metric") 
 with $\partial\overline{\partial}\Omega_u^{n-1}=0$ 
in the weak sense,  given a  nonnegative
 $C^{1,1}$-smooth
 function $\tilde{\phi}$.
  %$\Omega_u^{n}= \tilde{\phi}\omega^n$
 %with right-hand side being $\tilde{\phi}$

 %$\tilde{\phi}$ is only assumed to 
 
 %We partially settle this problem
 We make progress on this subject under the assumption that
 \begin{equation}
 	\label{keykey-phi}
 	\begin{aligned}
 		\frac{\partial \tilde{\phi}}{\partial \nu}=0 \mbox{ }\mbox{ at } \left\{p\in\partial M: \tilde{\phi}(p)=0\right\}
 	\end{aligned}
 \end{equation}
 where $\nu$ is the unit inner normal vector along the boundary. Such a condition is automatically satisfied if the data $(M,\tilde{\phi})$ is \textit{extensible} in the sense that
 \begin{itemize} 
 	\item $(M,\omega)$ is a complex submanifold of an open Hermitian manifold $(M',\omega)$ of complex dimension $n$, 
 	$\bar M\subsetneq M'$; and 
 	\item %the right-hand side of prescribed volume form, 
 	$\tilde{\phi}$  can be $C^{1,1}$-smoothly extended to a nonnegative function on $M'$.
 \end{itemize}

 \begin{theorem}
 	\label{them1-n-1-phi}
 	Let $(M,J,\omega)$  be a compact Hermitian manifold with smooth \textit{mean pseudoconcave} boundary.  
 	% Suppose, in addition to \eqref{strictly-subsolution-MA-n-1}, that $0\leq\tilde{\phi}\in C^{1,1}(\bar M)$ satisfies \eqref{keykey-phi}.  
 	Let 	$\varphi\in C^{2,1}(\partial M)$ and $\tilde{\phi}=\phi^{1/(n-1)}$ be as in \eqref{tildephi-1} for $\phi\geq0$. 
 Assume the data  $\tilde{\phi}\in C^{1,1}(\bar M)$ satisfies  \eqref{keykey-phi}  and suppose a $C^{2,1}$ $(n-1)$-PSH strict subsolution subject to \eqref{strictly-subsolution-MA-n-1}. Then 
 	%the volume form
 $$\Omega_u^n=\tilde{\phi}\omega^n$$
 	%with prescribed boundary data $u|_{\partial M}=\varphi$,
 	possesses an $(n-1)$-PSH weak solution $u\in C^{1,\alpha}(\bar M)$ with $\forall 0<\alpha<1$, $u|_{\partial M}=\varphi$,
 %	with %\eqref{n-1-PSH-de} and 
 	$\Delta u \in L^{\infty}(\bar M)$.
 	
 \end{theorem}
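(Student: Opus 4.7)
The plan is to reduce the degenerate problem to the non-degenerate setting of Theorem~\ref{thm0-n-1-yuan3} by approximation. For each small $\varepsilon>0$, set $\tilde{\phi}_\varepsilon:=\tilde{\phi}+\varepsilon$ and $\phi_\varepsilon:=\tilde{\phi}_\varepsilon^{n-1}>0$ on $\bar M$. Since $\underline{u}$ is a strict subsolution with margin $\delta>0$ in the sense of \eqref{strictly-subsolution-MA-n-1} and $\phi_\varepsilon-\phi\to 0$ uniformly on $\bar M$, $\underline{u}$ remains a subsolution of the perturbed Dirichlet problem with datum $\phi_\varepsilon$ once $\varepsilon$ is small. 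After a standard $C^\infty$-mollification of $\phi_\varepsilon$, Theorem~\ref{thm0-n-1-yuan3} yields a smooth strictly $(n-1)$-PSH solution $u_\varepsilon$ of
\[
\Omega_{u_\varepsilon}^{n}=\tilde{\phi}_\varepsilon\,\omega^{n}\text{ in }M,\qquad u_\varepsilon=\varphi\text{ on }\partial M.
\]
The theorem then reduces to $\varepsilon$-uniform a priori estimates and a limiting argument.

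Uniform $C^0$ and $C^1$ bounds on $u_\varepsilon$ come from standard considerations: the $C^0$ bound by comparison with $\underline{u}$ from below together with an upper barrier, and the gradient bound by the $(n-1)$-PSH first-order estimate, depending only on $\|u_\varepsilon\|_{C^0(\bar M)}$, $\|\tilde{\phi}\|_{C^1(\bar M)}$, and the background geometry. Neither estimate sees the degeneracy of $\phi$, so the constants are uniform in $\varepsilon$.

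The main obstacle is the $\varepsilon$-uniform $L^\infty$ bound on $\Delta u_\varepsilon$. The interior second-order estimate proceeds with constants depending only on $\|\tilde{\phi}_\varepsilon\|_{C^{1,1}(\bar M)}\leq\|\tilde{\phi}\|_{C^{1,1}(\bar M)}+1$, hence uniformly in $\varepsilon$. The boundary tangential-tangential and tangential-normal estimates likewise carry through, using the strict subsolution and the mean pseudoconcavity \eqref{mean-pseudoconcave1} in the manner of Theorem~\ref{thm2-n-1-yuan3}. The delicate case is the double-normal derivative at a boundary point $p$ with $\tilde{\phi}(p)=0$, since the standard barrier argument produces a constant involving $(\inf\phi_\varepsilon)^{-1/n}$ that blows up as $\varepsilon\to 0$. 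Hypothesis \eqref{keykey-phi} is precisely what resolves this: at such a point $p$ the tangential gradient of $\tilde{\phi}$ vanishes (because $p$ is a boundary minimum of $\tilde{\phi}|_{\partial M}\geq 0$), and the normal derivative vanishes by \eqref{keykey-phi}; combined with $\tilde{\phi}\in C^{1,1}(\bar M)$ this forces the quadratic bound $\tilde{\phi}(x)\leq C\,\dist(x,p)^{2}$ in a neighborhood of $p$. Feeding this quadratic vanishing into a refined boundary barrier construction yields an $\varepsilon$-uniform double-normal estimate depending only on $\|\tilde{\phi}\|_{C^{1,1}(\bar M)}$, the strict subsolution, and the geometric data.

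Given the uniform $L^\infty$ bound on $\Delta u_\varepsilon$ and the uniform $C^1$ bound, standard linear elliptic theory yields uniform $C^{1,\alpha}(\bar M)$ estimates for every $\alpha\in(0,1)$. A subsequence converges in $C^{1,\beta}(\bar M)$, $\beta<1$, to a limit $u\in C^{1,\alpha}(\bar M)$ with $u|_{\partial M}=\varphi$ and $\Delta u\in L^\infty(\bar M)$; the $(n-1)$-PSH property passes to the limit by closedness under uniform convergence. Passing to the limit in $\Omega_{u_\varepsilon}^{n}=\tilde{\phi}_\varepsilon\omega^{n}$, where the left-hand side is meaningful (for instance almost everywhere, or in the viscosity sense) thanks to $\Delta u\in L^\infty(\bar M)$, delivers the desired weak solution $\Omega_u^{n}=\tilde{\phi}\omega^{n}$.
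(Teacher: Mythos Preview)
Your overall approximation scheme is the right one, but two key points are mislocated and this leaves genuine gaps.

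First, there is no direct gradient estimate available for this equation: as the paper notes explicitly, deriving a first-order bound directly is still open. The actual route is to prove the \emph{quantitative} second-order estimate $\sup_M\Delta u_\varepsilon\leq C(1+\sup_M|\nabla u_\varepsilon|^2)$ first (combining the boundary estimate of Theorem~\ref{thm-21-yuan} with the interior estimate \eqref{STW-estimate}), and only then extract the gradient bound via a blow-up argument. So the $C^1$ control cannot be invoked as a ``standard'' step preceding the second-order estimate; the logic runs the other way.

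Second, you have inverted where hypothesis~\eqref{keykey-phi} enters. Under the mean pseudoconcavity assumption, the pure double-normal estimate (Proposition~\ref{proposition-quar-yuan2}) is already independent of $(\inf_M\phi)^{-1}$ and needs nothing from~\eqref{keykey-phi}; the ``standard barrier'' there does \emph{not} blow up. The genuine difficulty lies in the tangential-normal estimate and in the interior second-order estimate: both involve $\nabla\psi=\nabla\log\phi$, which is unbounded as $\phi\to 0$. The role of~\eqref{keykey-phi} is to guarantee the global inequality $|\nabla\tilde{\phi}|\leq C\sqrt{\tilde{\phi}}$ on $\bar M$ (see~\eqref{sqrt-condition1}), and this is stable under the perturbation $\tilde{\phi}\mapsto\tilde{\phi}+\varepsilon$. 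That inequality then yields $|\nabla\psi|\leq C\phi^{-1/n}\leq C\sum_i f_i$, which is exactly what is needed to close the tangential-normal barrier argument (Proposition~\ref{mix-Leviflat--2}) and to make the Sz\'ekelyhidi--Tosatti--Weinkove interior estimate go through with constants depending only on $\|\tilde{\phi}\|_{C^{1,1}}$ rather than on $\|\phi^{1/n}\|_{C^2}$ (Proposition~\ref{remark2.4}). Your local quadratic-vanishing observation at boundary zeros of $\tilde{\phi}$ is correct but does not by itself feed into either of these estimates.
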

 
 The existence of 
 %$\underline{u}$ satisfying \eqref{subsolution-MA-n-1} (resp. \eqref{strictly-subsolution-MA-n-1}), also called a  \textit{subsolution} (resp. \textit{strict subsolution}) of Dirichlet problem  \eqref{MA-n-1} and \eqref{bdy-value-2}, 
 subsolution %is a necessary and crucial assumption for the study of Dirichlet problem. It 
 is imposed as an important ingredient to study the Dirichlet problem, subsequent to the works of %\cite{Guan1993Spruck,Guan1998The,Hoffman1992Boundary}
  \cite{Guan1993Spruck,Guan1998The}
   concerning Mong-Amp\`ere equations on domains. It would be worthwhile to note that the subsolution assumption is a necessary condition for the solvability of Dirichlet problem.
   %since it is not always solvable without the subsolution assumption.  
 
Let  $(X, J_X, \omega_X)$ be a closed balanced manifold (introduced by Michelsohn \cite{Michelsohn1982Acta}), let $(S, J_S, \omega_S)$ be a compact Riemann surface  with boundary $\partial S$.
We confirm the subsolution assumption 
% We can construct strictly $(n-1)$-PSH subsolutions 
on the standard product 
 \begin{equation}
 	\label{product-1}
 	\begin{aligned}
 		(M,J,\omega)=(X\times S,J,\pi_1^*\omega_X+\pi_2^*\omega_S),
 \end{aligned}\end{equation}
when  the boundary data $\varphi$ can be extended to a $C^{2,1}$-smooth functions on $\bar M$, still denoted by
$\varphi$, satisfying
\begin{equation}
	\label{bdy-n-1}
	\begin{aligned}
		\tilde{\chi}+\frac{1}{n-1}\left(\Delta \varphi\omega-\sqrt{-1}\partial \overline{\partial}\varphi\right)+ Z[\varphi] +t\pi_1^*\omega_X>0  
	 \mbox{ for some } t\gg1.
	\end{aligned}
\end{equation}
% This is necessary for the study of Dirichlet problem within the framework of elliptic equations.
Here  
 $\pi_{1}$, $\pi_2$ denote the nature projections:
 \begin{equation}
 	\begin{aligned}
 		\,&  \pi_1: X\times S\rightarrow X, 
 		\,&  \pi_2: X\times S\rightarrow S.  \nonumber
 	\end{aligned}
 \end{equation} 
 The construction starts with the solution of
 \begin{equation}  
 	\label{possion-def}
 	\begin{aligned}
 		\,&  \Delta_S h =1 \mbox{ } \mbox{ in } S, \,& h=0 \mbox{ } \mbox{ on } \partial S.  
 	\end{aligned}
 \end{equation} 
 %Since  $\partial (\pi_2^*h)\wedge \overline{\partial}\omega^{n-2}=\partial (\pi_2^*h)\wedge  \overline{\partial}(\pi_1^*\omega_X^{n-2}),$ ($\pi_2^* h=h\circ\pi_2$, still denoted by $h$ for simplicity),
 According to Lemma \ref{obser-subsolution} below, the obstruction to constructing subsolutions  
 vanishes whenever  %$\overline{\partial}\omega_X^{n-2}=0$, i.e. 
 $\omega_X$ is
 balanced. 
We can verify 
 $$*(\sqrt{-1}\partial\overline{\partial}h\wedge\omega^{n-2})
 = (n-2)! (\Delta h\omega-\sqrt{-1}\partial\overline{\partial}h)
 % = *(\pi_2^*\omega_S\wedge \pi_1^*\omega_X^{n-2})
 =(n-2)!\pi_1^*\omega_X.$$
The $(n-1)$-PSH subsolution is given by
 \begin{equation}   \begin{aligned}
 		\underline{u}=\varphi+t h \quad \mbox{for } t\gg1. \nonumber
 \end{aligned}\end{equation}
 %provided that

 As a consequence, we can solve Dirichlet problem  \eqref{MA-n-1} and \eqref{bdy-value-2} on such products. In addition, combining with Propositions \ref{proposition-quar-yuan2} and \ref{mix-Leviflat}, we obtain some more delicate results
  with less regularity assumption.
 
  \begin{theorem}
 	\label{thm2-volumeform}
 	
 	Let $0<\beta<1$, and let $(M,J,\omega)=(X\times S,J,\pi_1^*\omega_X+\pi_2^*\omega_S)$
 	be as above with 
 	$\partial S\in C^{2,\beta}$. For any $0\leq\tilde{\phi}\in C^{1,1}(\bar M)$ satisfying \eqref{keykey-phi},  
 	%$\Omega_u^n=\tilde{\phi}\omega^n$
 	%the prescribed volume form \eqref{VolForm-homo1}
 	%with homogeneous boundary data 
 	the Dirichlet problem 
 	\begin{equation}
 		\label{VolForm-homo1}
 		\begin{aligned}
 			\Omega_u^n=\tilde{\phi}\omega^n 
 			\mbox{ } \mbox{ in } M, 
 			\quad u=0 \mbox{ }  \mbox{ on } \partial M 
 		\end{aligned}
 	\end{equation}
 	admits a $C^{1,\alpha}$ 
 	$(n-1)$-PSH solution with  $\forall 0<\alpha<1$, 
 	%$u|_{\partial M}=0$, 
 	$\Delta u\in {L^\infty(\bar M)}$  in the weak sense.

 \end{theorem}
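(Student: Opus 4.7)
The plan is to derive Theorem \ref{thm2-volumeform} from Theorem \ref{them1-n-1-phi} by producing an explicit strict $(n-1)$-PSH subsolution of the Dirichlet problem \eqref{VolForm-homo1} on $M=X\times S$, and then by absorbing the mild regularity deficit using Propositions \ref{proposition-quar-yuan2} and \ref{mix-Leviflat}. First I would take $h\in C^{2,\beta}(\bar S)$ to be the unique solution of the Poisson problem \eqref{possion-def}, which exists by standard elliptic theory since $\partial S\in C^{2,\beta}$. Pulling $h$ back via $\pi_2$ and using the identity
$$*\!\left(\sqrt{-1}\,\partial\overline{\partial}h\wedge\omega^{n-2}\right)=(n-2)!\,\pi_1^*\omega_X$$
recorded in the excerpt, I would show that $\underline{u}:=t h$ (with boundary value $\varphi\equiv 0$) is a $C^{2,\beta}$ strict subsolution in the sense of \eqref{strictly-subsolution-MA-n-1} once $t$ is taken sufficiently large. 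The hypothesis $\tilde\phi\in C^{1,1}(\bar M)$ guarantees $\phi$ is bounded, and the balanced condition $d\omega_X^{n-1}=0$ is precisely what keeps the obstruction term $\underline Z$ under control (cf. Lemma \ref{obser-subsolution}); condition \eqref{bdy-n-1} with $\varphi=0$ reduces to $\tilde\chi+t\pi_1^*\omega_X>0$, which is immediate for $t\gg 1$.

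Next I would verify that $\partial M=X\times\partial S$ is mean pseudoconcave, in fact Levi flat. At each boundary point $(p,q)$, because $\partial S$ is a real curve in the Riemann surface $S$, the maximal $J$-invariant subspace of $T_{(p,q)}\partial M$ is exactly $T_pX$. Since the complex submanifold $X\times\{q\}\subset\partial M$ is tangent to this distribution, the Levi form $L_{\partial M}$ vanishes identically, so the $\kappa_i$ in \eqref{mean-pseudoconcave1} all vanish and mean pseudoconcavity holds. Together with Step 1, this places \eqref{VolForm-homo1} within the scope of Theorem \ref{them1-n-1-phi}: the compatibility condition \eqref{keykey-phi} is assumed by hypothesis, and the strict subsolution has been constructed.

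The only remaining point is a regularity gap: Theorem \ref{them1-n-1-phi} is stated with smooth boundary and a $C^{2,1}$ subsolution, while here both the subsolution and $\partial M$ inherit only $C^{2,\beta}$ regularity from $h$ and $\partial S$. I expect this to be the main obstacle, and it is precisely why Propositions \ref{proposition-quar-yuan2} and \ref{mix-Leviflat} are invoked. Levi flatness is the key enabling feature: tangential second derivatives along the leaves $X\times\{q\}$ are controlled directly by the balanced structure on $X$, while transverse and mixed derivative bounds follow from the degenerate boundary analysis built into those propositions. Passing to the limit through smooth approximations $\tilde\phi_\varepsilon\to\tilde\phi$, combined with the $L^\infty$ Laplacian bound and the $C^{1,\alpha}$ interior regularity supplied by Theorem \ref{them1-n-1-phi}, then delivers the claimed weak solution $u\in C^{1,\alpha}(\bar M)$ with $\Delta u\in L^\infty(\bar M)$.
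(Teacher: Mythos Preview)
Your construction of the subsolution $\underline u=th$ and your verification that $\partial M=X\times\partial S$ is Levi-flat (hence mean pseudoconcave and holomorphically flat) are both correct and match the paper. The gap is in how you close the regularity deficit.

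You propose to apply Theorem \ref{them1-n-1-phi} directly and then repair the $C^{2,\beta}$ versus smooth-boundary mismatch by invoking Propositions \ref{proposition-quar-yuan2} and \ref{mix-Leviflat} and approximating $\tilde\phi$. But Theorem \ref{them1-n-1-phi} cannot be invoked on $M$ at all: its hypotheses require smooth boundary and a $C^{2,1}$ subsolution, neither of which you have. Propositions \ref{proposition-quar-yuan2} and \ref{mix-Leviflat} are a priori estimates for solutions already known to exist; they do not by themselves produce a solution on a $C^{2,\beta}$ domain. Approximating $\tilde\phi$ alone does nothing for the boundary regularity.

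The paper's mechanism is different: it approximates the \emph{domain}. One uses the sublevel sets $\{h=-\alpha_k\}$ of the Poisson solution $h$ to exhaust $S$ by Riemann surfaces $S_k$ with smooth boundary, sets $M_k=X\times S_k$, and solves the Dirichlet problem on each $M_k$ with constant boundary data $-t\alpha_k$ via Theorem \ref{thm0-n-1-yuan3}. On $\bar M_k$ the subsolution $\underline u=th$ is smooth, so there is no regularity issue at this stage. The point of Propositions \ref{proposition-quar-yuan2} and \ref{mix-Leviflat} (together with Proposition \ref{remark2.4}) is that, because $\partial M_k$ is holomorphically flat and mean pseudoconcave and the boundary data is constant, the quantitative boundary estimate depends on $\partial M_k$ only up to second derivatives; since $\partial S_k\to\partial S$ in $C^{2,\beta}$, these constants are uniform in $k$. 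A separate comparison argument with an explicit supersolution (bounding $w^{(k)}\le -\underline u-2t\alpha_k$) is needed to make $\sup_{M_k}|u^{(k)}|+\sup_{\partial M_k}|\nabla u^{(k)}|$ uniform in $k$; you do not mention this step. Finally, the Silvestre--Sirakov boundary $C^{2,\alpha'}$ result is used to pass to the limit with only $C^{2,\beta}$ boundary. Your outline is missing the domain exhaustion, the uniform-in-$k$ supersolution bound, and the appeal to Silvestre--Sirakov.
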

 
 \begin{theorem}
 	\label{thm2-volumeform-2}
 	
 	Let $(M,J,\omega)=(X\times S,J,\pi_1^*\omega_X+\pi_2^*\omega_S)$
 	be as above with %in \eqref{product-1} with a closed balanced factor,
 	$\partial S\in C^{2,\beta}$ for some $0<\beta<1$. Given a  $0<\tilde{\phi}\in  C^{2}(\bar M)$,
 	% satisfying \eqref{keykey-phi}, we have
  there is a unique $C^{2,\alpha}$-smooth $(n-1)$-PSH function for some $0<\alpha\leq\beta$ solving  \eqref{VolForm-homo1}.

 \end{theorem}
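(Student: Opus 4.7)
The plan is to reduce to Theorem \ref{thm0-n-1-yuan3} via a simultaneous domain and data approximation, using the product structure to guarantee the strict subsolution hypothesis, and then pass to the limit with uniform $C^{2,\alpha}$ estimates that the nondegeneracy $\tilde{\phi}>0$ makes available.

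First I would construct a $C^{2,\beta}$ strict subsolution. The boundary datum $\varphi\equiv 0$ extends trivially to $\bar M$, and for this extension the positivity requirement \eqref{bdy-n-1} collapses to $\tilde{\chi}+t\pi_1^*\omega_X>0$, which holds for $t\gg 1$ because $\tilde{\chi}=\frac{1}{(n-1)!}*\omega_0^{n-1}$ is already positive definite. Let $h$ solve the Poisson problem \eqref{possion-def} on $(S,\omega_S)$; since $\partial S\in C^{2,\beta}$, Schauder theory for the Laplacian on a $C^{2,\beta}$ domain gives $h\in C^{2,\beta}(\bar S)$, hence $\underline{u}:=t h\in C^{2,\beta}(\bar M)$. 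Using the identity $*(\sqrt{-1}\partial\overline{\partial}h\wedge\omega^{n-2})=(n-2)!\pi_1^*\omega_X$ recorded before Theorem \ref{thm2-volumeform} (which is exactly where the balanced condition on $X$ enters), the argument of Lemma \ref{obser-subsolution} shows that $\underline{u}$ satisfies \eqref{strictly-subsolution-MA-n-1} with a gap $\delta>0$ secured by choosing $t$ large relative to $\sup_{\bar M}\tilde{\phi}$.

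Next I would approximate. Pick $C^\infty$-smooth domains $\{S_k\}$ exhausting $S$ with $\partial S_k\to\partial S$ in $C^{2,\beta}$, and $C^\infty$-smooth positive functions $\tilde{\phi}_k\to\tilde{\phi}$ in $C^2(\bar M)$. On $M_k:=X\times S_k$ the restriction of $\underline{u}$ is still a strict subsolution (after an inessential adjustment of $\delta$), so Theorem \ref{thm0-n-1-yuan3} yields a unique smooth strictly $(n-1)$-PSH solution $u_k$ of $\Omega_{u_k}^n=\tilde{\phi}_k^{\,n-1}\omega^n$ with $u_k|_{\partial M_k}=0$. The decisive step is to establish uniform $C^{2,\alpha}(\bar M_k)$ bounds for $u_k$ depending only on $\|\tilde{\phi}\|_{C^2}$, $\inf\tilde{\phi}$, and the $C^{2,\beta}$ regularity of $\partial S$. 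The $C^0$ bound follows from comparison of $\underline{u}$ with a standard supersolution. The quantitative boundary estimate of the paper provides the $C^1$ and real Hessian bounds up to $\partial M_k$. Because $\tilde{\phi}$ is bounded below by a positive constant, the linearization along $u_k$ is uniformly elliptic, and a concavity-based Evans--Krylov type argument, interior and up to the boundary, then upgrades to uniform $C^{2,\alpha}$ control for some $0<\alpha\le\beta$.

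The principal obstacle is handling the merely $C^{2,\beta}$ boundary in the boundary Hölder estimate on $D^2u_k$: after flattening $\partial M_k$ by a $C^{2,\beta}$ chart, the coefficients of the transformed equation inherit only $C^\beta$ regularity, so the resulting Hölder exponent cannot exceed $\beta$, which is precisely why the conclusion reads "for some $0<\alpha\le\beta$". Once the uniform bounds are in hand, Arzelà--Ascoli extracts a $C^{2,\alpha'}$-convergent subsequence for every $\alpha'<\alpha$, whose limit $u\in C^{2,\alpha}(\bar M)$ solves \eqref{VolForm-homo1}. Uniqueness is by the standard comparison principle: if $u_1,u_2$ are two $C^2$ strictly $(n-1)$-PSH solutions, concavity of $\log\det$ on the positive cone produces a linear elliptic inequality satisfied by $u_1-u_2$ with zero boundary values, forcing $u_1\equiv u_2$.
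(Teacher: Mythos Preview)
Your strategy---build a subsolution from the Poisson solution $h$, exhaust by smooth domains, solve via Theorem \ref{thm0-n-1-yuan3}, obtain uniform $C^{2,\alpha}$ bounds, pass to the limit---is exactly the paper's. There is, however, a concrete gap in your approximation step. You set $u_k|_{\partial M_k}=0$ and claim that the restriction of $\underline{u}=th$ is a strict subsolution on $M_k$. In the paper's definition a subsolution must satisfy $\underline{u}=\varphi$ on the boundary, but $h<0$ in $S$, so $\underline{u}<0$ on $\partial M_k$ for any $S_k\Subset S$; thus $\underline{u}$ does \emph{not} meet the boundary condition and Theorem \ref{thm0-n-1-yuan3} cannot be invoked as stated. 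The paper circumvents this by choosing the approximating surfaces as sublevel sets $S_k=\{h<-\alpha_k\}$ of $h$ itself, so that $\underline{u}\equiv -t\alpha_k$ on $\partial M_k$, and then solves with boundary data $-t\alpha_k$ rather than $0$; the limit $\alpha_k\to 0$ recovers the homogeneous condition.

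This choice is not merely cosmetic: having \emph{constant} boundary data on a holomorphically flat boundary is precisely what allows the paper to invoke Proposition \ref{mix-Leviflat} (in place of Proposition \ref{mix-general}), where the constant in the tangential--normal estimate depends only on $\partial M_k$ up to second derivatives. That is the mechanism by which the $C^{2,\beta}$ (rather than $C^3$) regularity of $\partial S$ suffices. Your argument implicitly relies on this, but your domain choice does not guarantee constant boundary data for the subsolution/solution pair. Finally, for the boundary $C^{2,\alpha}$ estimate the paper cites Silvestre--Sirakov \cite{Silvestre2014Sirakov}, which is tailored to $C^{2,\beta}$ domains and gives the exponent $\alpha\le\beta$ you anticipate; your ``Evans--Krylov up to the boundary after flattening'' is the right intuition, but the precise reference is what makes the low boundary regularity rigorous.
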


 Finally, we make some remarks:

 %Finally, we shall remark here that:

  \vspace{1mm}
 $\mathrm{{(1)}}$  Our results 
 %hold for more general real $(1,1)$-form $\tilde{\chi}$, not just  $\tilde{\chi}=\frac{1}{(n-1)!}*\omega_0^{n-1}$ for a Gauduchon metric $\omega_0$.
 are valid when replacing $\tilde{\chi}=\frac{1}{(n-1)!}*\omega_0^{n-1}$ by  more general real $(1,1)$-forms $\tilde{\chi}$.

  \vspace{1mm}
 $\mathrm{{(2)}}$
 It is still open to derive gradient estimate directly for Monge-Amp\`ere equation for $(n-1)$-PSH functions. 
 %as done by Hanani \cite{Hanani1996},  B{\l}ocki \cite{Blocki09gradient} and Guan-Li \cite{Guan2010Li} for complex Monge-Amp\`ere equation. 
 %A straightforward extension was obtained by the author \cite{yuan2021cvpde}  to equations with positive cone. 
 Our strategy is to derive  a quantitative version of %second order 
 boundary estimate 
 \begin{equation}
 	%\label{bdy-sec-estimate-quar1}
 	\begin{aligned}
 		\sup_{\partial M} \Delta u \leq C\left(1+\sup_M |\nabla u|^2\right). \nonumber
 	\end{aligned}
 \end{equation}
 This is precisely the estimate we prove in Theorem \ref{thm-21-yuan}. In the case of $Z=0$ such a boundary estimate was derived by the author in 
 %Such a boundary estimate was proved in \cite{Chen,Boucksom2012,Phong-Sturm2010} for complex Monge-Amp\`ere equation, and by the author 
 % \cite{yuan2017} and
 \cite{yuan-V}.
 %\renewcommand{\thefootnote}{\fnsymbol{footnote}}\footnote{The paper  \cite{yuan-V} is essentially extracted from %[arXiv:2203.03439],
 	%\cite{yuan2017}, Sections 4-6 in [arXiv:2001.09238] and Section 3 in [arXiv:2106.14837].}.
 %\cite{yuan-V} text overlap with arXiv: ??????, 2001.09238, 2106.14837.}
 %for general equations  \eqref{mainequ-standard}
 %extending Sz\'ekelyhidi's work to complex manifolds with boundary.
 %What we need to pay attention to 
 %We need to be aware of is  that the equation
 It would be worthwhile to note that when $Z\neq 0$ the proof of quantitative boundary estimate
 %of quantitative boundary estimate for \eqref{MA-n-1} 
 % is highly nontrivial and
 is much more complicated and fairly %different 
 difficult, due to the gradient terms from $Z$. It requires some new ideas and insights. 
 %We observe that the structural conditions on $Z$ allow us to achieve the conclusion.

\vspace{1mm}
$\mathrm{{(3)}}$  We prove in Theorem \ref{them1-n-1-phi} the existence of $C^{1,\alpha}$-smooth solution to 
%the prescribed volume form \eqref{Prescribed-Volume-form1} 
$\Omega_u^n=\tilde{\phi}\omega^n$ with the assumption  \eqref{keykey-phi} but without restriction to the dimension. 
That is different from
%to be compared with 
the works of Guan \cite{GuanP1997Duke} and Guan-Li \cite{Guan1997Li} on certain degenerate real Monge-Amp\`ere type equations, in which the assumptions  were only completely confirmed in dimensions $2$ and $3$.
% but it seems one only completely confirmed the assumptions imposed there in dimensions $2$ and $3$.

 \vspace{1mm}
 $\mathrm{{(4)}}$   In Theorems \ref{thm2-volumeform}-\ref{thm2-volumeform-2} we find a significant phenomenon on weakening regularity assumptions on boundary. %More precisely,
 For the Dirichlet problem with homogeneous boundary data
 on $M=X\times S$,
 the regularity assumption on boundary can be weakened to $C^{2,\beta}$; while
 such $C^{2,\beta}$ regularity assumption %on the boundary 
 is impossible even for Dirichlet problem of
 nondegenerate real Monge-Amp\`ere equation on certain bounded domains $\Omega\subset \mathbb{R}^2$,
 as shown by Wang \cite{WangXujia-1996}, the optimal regularity assumptions on the boundary and boundary data are both $C^3$-smooth.
 More results %on Dirichlet problem
 with less regularity assumption are established in  Section  \ref{sec5}.

  \vspace{1mm}
 $\mathrm{{(5)}}$ When $\omega_0$ is a balanced metric and $\omega$ is astheno-K\"ahler, one obtains  a $\mathrm{d}$-closed $(n-1,n-1)$-form
 \[%\Psi_u^{n-1}=
 \omega_0^{n-1}+\sqrt{-1}\partial \overline{\partial}u \wedge \omega^{n-2} + 2\mathfrak{Re}(\sqrt{-1}\partial u \wedge\overline{\partial}\omega^{n-2}),\]
 the Form-type Calabi-Yau equation introduced in \cite{FuWangWuFormtype2010} thus falls into a Monge-Amp\`ere type equations analogous to \eqref{MA-n-1}. By the same argument, we obtain a balanced metric $\Psi_u$ with prescribed volume form 
 \[\Psi_u^n=\tilde{\phi}\omega^n\]
 with the boundary value \eqref{bdy-value-2}; similarly, 
 we can construct the subsolutions on the products as we constructed in \eqref{product-1}. It was shown by Latorre-Ugarte
 \cite{Latorre2017Ugarte} that there are closed complex manifolds $X$ admitting a non-K\"ahler balanced metric
 $\omega_{X,0}$ %and $\omega_X$, where $\omega_{X,0}$ is of a balanced metric and 
 and a non-K\"ahler metirc $\omega_X$  simultaneously being astheno-K\"ahler and Gauduchon.
 %   and an astheno-K\"ahler and Gauduchon metric 
 On that product $X\times S$ one obtains a balanced metric
 $\omega_0=\pi_1^*\omega_{X,0}+\pi_2^*\omega_{S}$ and an astheno-K\"ahler metric
 $\omega=\pi_1^*\omega_X+\pi_2^*\omega_S$.
 As a result, we can %extend
 %Theorems \ref{thm0-n-1-yuan3}-\ref{thm2-volumeform-2}
 %Theorems \ref{thm2-volumeform}-\ref{thm2-volumeform-2}  to   
 solve the Dirichlet problem for Form-type Calabi-Yau equation on such products. %for balanced metrics.

 \vspace{1.5mm}
 The paper is organized as follows. 
 In Section \ref{sketchproof} we sketch the proof of main theorems.
 In Section \ref{sec2} we summarize some lemmas.
 %properties of structure for Monge-Amp\`ere equation for $(n-1)$-PSH functions.
 In Sections \ref{sec3} and \ref{sec4} we derive the quantitative boundary estimate, which is the main part of this paper.
 In Section \ref{sec5} we also use our estimate to derive more delicate results for Dirichlet problem with less regularity assumptions.
 In Section \ref{sec6} we briefly extend our main results to more general equations.
 %In Appendix \ref{appendix0} we collect a technical lemma proved in \cite{GSS14}. 
 In Appendix \ref{appendix1} we summarize and give the proof of a quantitative lemma proposed in \cite{yuan2017}, which is a crucial ingredient for quantitative boundary estimate.

 \section{Sketch of the proof}
 \label{sketchproof}
 
  It is mysterious to derive gradient estimate for
   % Monge-Amp\`ere equation for $(n-1)$-PSH functions 
  equation \eqref{MA-n-1} directly, as done by Hanani \cite{Hanani1996},  B{\l}ocki \cite{Blocki09gradient} and Guan-Li \cite{Guan2010Li} for complex Monge-Amp\`ere equation. 
 %A straightforward extension was obtained by the author \cite{yuan2021cvpde}  to the equations with positive cone. %\begin{equation}\label{mainequ-standard}
%	\begin{aligned}	f\left(\lambda(\chi+\sqrt{-1}\partial\overline{\partial}u)\right)=\psi.   % \nonumber
%\end{aligned}\end{equation}
%  Tosatti-Weinkove \cite{TW19} reduced the Gauduchon conjecture to deriving  
In \cite[Section 3]{STW17},   Sz\'ekelyhidi-Tosatti-Weinkove
derived gradient estimate 
for \eqref{MA-n-1} on closed Hermitian manifolds via a blow-up argument, 
establishing  the second order estimate of the form
\begin{equation}
	\label{sec-estimate-quar1}
	\begin{aligned}
		\sup_{M}\Delta u\leq C \left(1+\sup_{M} |\nabla u|^2\right) 
	\end{aligned}
\end{equation}
left open by Tosatti-Weinkove \cite{TW19}. 
%The blow-up argument is an alternative approach to deriving gradient estimate. 
Such a blow-up argument using \eqref{sec-estimate-quar1} appeared in previous works  as done by
Chen \cite{Chen}, complemented by \cite{Boucksom2012,Phong-Sturm2010},
for Dirichlet problem of complex 
Monge-Amp\`ere equation, %on $X\times A$, $A=\mathbb{S}^1\times [0,1]$, 
and by Dinew-Ko{\l}odziej \cite{Dinew2017Kolo} based on Hou-Ma-Wu's second estimate 
%\cite[Theorem 1.1]{HouMaWu2010}
\cite{HouMaWu2010}  for complex $k$-Hessian equations on
closed K\"ahler manifolds. 
An extensive extension %for  
%$$f(\lambda(\chi+\sqrt{-1}\partial\overline{\partial}u))=\psi$$  %\begin{equation}\label{mainequ-standard}\begin{aligned}	f\left(\lambda(\chi+\sqrt{-1}\partial\overline{\partial}u)\right)=\psi.   % \nonumber
% \end{aligned}\end{equation}
 was obtained by Sz\`ekelyhidi  \cite{Gabor} on closed manifolds, and   by the author \cite{yuan-V}
for the Dirichlet problem.
%For equation \eqref{mainequ-standard} on closed Hermitian manifolds,
%%on compact Hermitian manifolds without and with boundary, 
%the second estimate \eqref{sec-estimate-quar1} was established  by Sz\`ekelyhidi  \cite[Proposotion 13]{Gabor}. 
%Sz\`ekelyhidi's estimate was extended by the author \cite{yuan2017,yuan2019,yuan2021} (see also \cite{yuan-V}) to
%compact Hermitian manifolds with boundary,  
%and by the author \cite{yuan2017,yuan2019,yuan2021} (see also \cite{yuan-V}) to Dirichlet problem  \eqref{mainequ-standard} and \eqref{bdy-value-2}.
%thereby settling  gradient estimate for Dirichlet problem of \eqref{mainequ-standard} in  general context.

 When $\partial M\neq\emptyset$,  
Sz\'ekelyhidi-Tosatti-Weinkove's estimate  
 yields the following:
 \begin{theorem}
 	[\cite{STW17}]
 	\label{STW-estimate1}
 	Let $u\in C^{4}(M)\cap C^2(\bar M)$ be a $(n-1)$-PSH solution to  \eqref{MA-n-1}, then
 	\begin{equation}
 		\label{STW-estimate}
 		\begin{aligned}
 			\sup_{M} \Delta u
 			\leq C\left(1+ \sup_{M}|\nabla u|^{2} +\sup_{\partial M}|\Delta u|\right)
 		\end{aligned}
 	\end{equation}
 	where $C$ depends on $|\phi^{1/n}|_{C^2(\bar M)}$  
 	and other known data but not on $(\inf_M\phi)^{-1}$.
 \end{theorem}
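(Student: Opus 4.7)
The plan is to apply the maximum principle to a test function that controls the largest eigenvalue of the structural endomorphism. Write
$$\mathfrak{g}_u := \tilde{\chi} + \tfrac{1}{n-1}\bigl(\Delta u\,\omega - \sqrt{-1}\partial\overline{\partial}u\bigr) + Z,$$
with eigenvalues $\lambda_1 \geq \cdots \geq \lambda_n > 0$, so the equation reads $F := \log\det \mathfrak{g}_u = \log\phi$. Since $\sum_k \lambda_k = \Delta u + \mathrm{tr}(\tilde{\chi}) + \mathrm{tr}(Z)$ with $\mathrm{tr}(Z)$ linear in $\nabla u$, and all $\lambda_k \geq 0$, one has $\Delta u \leq n\lambda_1 + C(1+|\nabla u|)$; so a bound on $\lambda_1$ yields a bound on $\Delta u$. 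I would consider
$$Q := \log \lambda_1 + A|\nabla u|^2 + B\psi(u),$$
with large constants $A,B$ to be chosen and $\psi$ strictly convex. If the maximum of $Q$ on $\bar M$ is attained on $\partial M$, the conclusion follows from $\sup_{\partial M}|\Delta u|$; otherwise we work at an interior maximum $p_0$.

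At $p_0$, choose holomorphic normal coordinates diagonalising both $\omega$ and $\mathfrak{g}_u$, and set $F^{i\overline{j}} = \partial F/\partial (\mathfrak{g}_u)_{i\overline{j}}$ and $\mathcal{T} := \sum F^{i\overline{i}}$. The linearisation has the schematic form
$$\mathcal{L}v \;=\; \tfrac{1}{n-1}\bigl(\mathcal{T}\Delta v - \sum F^{i\overline{i}} v_{i\overline{i}}\bigr) \;+\; (\text{first-order piece from } Z),$$
which is uniformly elliptic with $\mathcal{T} \geq n\phi^{-1/n}$ by the arithmetic--geometric mean. Next, compute $\mathcal{L}(\log \lambda_1)$ using the standard perturbation trick to handle coinciding eigenvalues, and $\mathcal{L}(|\nabla u|^2)$ by differentiating the equation once. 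The concavity of $\log\det$ yields the positive third-derivative term
$$\mathcal{P} \;:=\; \sum_{\alpha>1} \frac{|\partial_k (\mathfrak{g}_u)_{\alpha\overline{1}}|^2 + |\partial_k (\mathfrak{g}_u)_{1\overline{\alpha}}|^2}{\lambda_1(\lambda_1 - \lambda_\alpha)},$$
while $\mathcal{L}(|\nabla u|^2)$ produces the positive quadratic term $\sum F^{i\overline{j}}(|u_{ki}|^2 + |u_{k\overline{i}}|^2)$; for $A\gg 1$ this dominates the "bad" second-order terms and, in particular, generates a piece of size $A\mathcal{T}\lambda_1^2$. The $B\psi(u)$ term supplies an extra $B\mathcal{T}$ that guarantees positivity of the remaining lower-order slack.

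The main obstacle is the gradient contribution of the Hermitian torsion term $Z$: differentiating the equation produces new mixed terms $F^{i\overline{j}}(\partial_k Z)_{i\overline{j}}$, first-order in $u$ paired with second derivatives of $\mathfrak{g}_u$, bounded by Cauchy--Schwarz by $C|\nabla u|\,\lambda_1^{1/2}\mathcal{T}$. The resulting $\varepsilon \lambda_1 \mathcal{T}$ must be split between $\mathcal{P}$ and the quadratic term from $A\mathcal{L}(|\nabla u|^2)$, and the interplay of these absorptions with the trace-minus-Hessian structure of $\mathcal{L}$ is the delicate bookkeeping that distinguishes this equation from the classical complex Monge--Amp\`ere case. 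Provided this is carried through, at $p_0$ one obtains $\lambda_1(p_0) \leq C(1+|\nabla u(p_0)|^2)$, which combined with the boundary alternative yields the claimed estimate. The elegant alternative employed by Sz\'ekelyhidi--Tosatti--Weinkove is to bypass this bookkeeping by a blow-up argument: rescale a putative counterexample to obtain an entire $(n-1)$-PSH solution on $\mathbb{C}^n$ with bounded gradient, to which a Liouville-type theorem applies.
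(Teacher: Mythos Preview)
The paper does not supply its own proof of this theorem: it is quoted directly from \cite{STW17}, and later (Proposition~\ref{remark2.4}) the paper only records that the STW argument can be rerun under the weaker hypothesis $\phi^{1/(n-1)}\in C^{1,1}$ once one bounds $|\psi_k|$ and $-\psi_{k\bar k}$ in terms of $\sum_i f_i$. So there is no line-by-line paper proof to compare against; your outline is roughly in the spirit of the actual STW maximum-principle computation.

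There is, however, a factual error in your final paragraph. Sz\'ekelyhidi--Tosatti--Weinkove do \emph{not} use a blow-up argument to prove the second-order estimate \eqref{STW-estimate}; they prove it by a direct maximum-principle computation with a test function of the form $\log\lambda_1+\varphi(|\nabla u|^2)+\psi(u-\underline u)$. The blow-up argument is applied \emph{afterwards}, with \eqref{STW-estimate} already in hand, to extract the gradient bound $\sup_M|\nabla u|\le C$ from it---this is exactly what the present paper says in the paragraph preceding Theorem~\ref{STW-estimate1}. You have the logical order reversed.

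On the test function itself: the linear choice $A|\nabla u|^2$ in place of a strictly convex $\varphi(|\nabla u|^2)$ is not what is done in \cite{STW17} and is unlikely to close the argument. In the Hou--Ma--Wu/STW scheme the nonlinearity (specifically $\varphi''\ge 2(\varphi')^2$, e.g.\ $\varphi(s)=-\tfrac12\log(1-s/2K)$ with $K=1+\sup_M|\nabla u|^2$) is used at the critical point, together with $\nabla Q=0$, to absorb the negative $-\lambda_1^{-2}F^{i\bar i}|\partial_i\lambda_1|^2$ term arising from $\mathcal L(\log\lambda_1)$; with $\varphi''=0$ that absorption is unavailable. Similarly, the function of $u$ needs a specific nonlinear choice so that its second derivative produces a large multiple of $\sum_i f_i$ while its first derivative stays bounded. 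Your sketch acknowledges that the bookkeeping is delicate but does not engage with these two structural points, which are precisely where the proof lives.
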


It only requires to prove quantitative boundary estimate.
% can be stated as follows.
 \begin{theorem}
 	\label{thm-21-yuan}
 	Under the assumptions of  Theorem \ref{thm0-n-1-yuan3},  any strictly $(n-1)$-PSH solution $u\in C^3(M)\cap C^2(\bar M)$ to the Dirichlet problem \eqref{MA-n-1} and \eqref{bdy-value-2} satisfies
 	\begin{equation}
 		\label{bdy-sec-estimate-quar1}
 		\begin{aligned}
 			\sup_{\partial M} \Delta u \leq C
 			\left(1+\sup_M |\nabla u|^2\right),
 		\end{aligned}
 	\end{equation}
 	where $C$ depends on $|\phi^{{1}/{n}}|_{C^{1}(\bar M)}$, $|\varphi|_{C^3(\bar M)}$,
 	%$(\inf_{\bar M}\min_i\underline{\mu}_i)^{-1}$, 
 	$|\underline{u}|_{C^2(\bar M)}$, $\partial M$ up to third derivatives and other known data.
 		
 	Furthermore, $C$ is independent of $(\inf_M \phi)^{-1}$ if. $\partial M$ is mean pseudoconcave.
 \end{theorem}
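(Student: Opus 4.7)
The plan is to adapt the classical three-step strategy for boundary Hessian estimates (tangential-tangential, mixed tangential-normal, double-normal) to the $(n-1)$-PSH operator, with the novelty being the careful handling of the gradient-dependent term $Z[u]$ that enters the linearization. Fix $p_0 \in \partial M$ and pick local coordinates in which $\omega_{i\bar j}(p_0) = \delta_{ij}$ and the boundary is the zero set of a smooth local defining function $\rho$. Writing the equation as $\log\det U = \log\phi$ with $U_{i\bar j} = \tilde\chi_{i\bar j} + \frac{1}{n-1}(g^{k\bar l}u_{k\bar l}g_{i\bar j} - u_{i\bar j}) + Z_{i\bar j}$, denote the linearized operator by $L$; because $Z$ depends on $\nabla u$, $L$ carries first-order coefficients in addition to the second-order $F^{i\bar j} = \partial \log\det U/\partial U_{i\bar j}$. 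Concavity of $\log\det$ on the positive cone and the strict subsolution inequality $F^{i\bar j}(U_{i\bar j} - \underline U_{i\bar j}) \leq -\delta_0$ will be used repeatedly.

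For the tangential-tangential bound, the identity $u = \underline u$ on $\partial M$ yields, after differentiating twice along tangential vector fields $T_\alpha, T_\beta$ and using the second fundamental form $\Pi$ of $\partial M$,
\begin{equation*}
(u - \underline u)_{\alpha\beta}(p_0) = -\Pi_{\alpha\beta}(p_0)\,(u - \underline u)_\nu(p_0),
\end{equation*}
which immediately gives $|u_{\alpha\beta}(p_0)| \leq C(1 + |\nabla u|(p_0))$, better than required.

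The mixed tangential-normal bound is the heart of the proof. For a tangential operator $T$, I would consider, on a small half-ball $B_\delta^+ \cap \bar M$, the barrier
\begin{equation*}
\Psi = A(\underline u - u) + B\rho - N|z - z(p_0)|^2 \pm T(u - \underline u),
\end{equation*}
and apply $L$. Differentiating the equation along $T$ produces the commutator $[T,L]u$, which in the present setting generates extra first-order terms of the form $F^{i\bar j}(\partial Z_{i\bar j}/\partial(\nabla u))\cdot\nabla\nabla u$ absent in the $Z=0$ treatment of \cite{yuan-V}. These are absorbed via Cauchy-Schwarz,
\begin{equation*}
|F^{i\bar j}\,(\text{first order in }\nabla u)\cdot\partial_k u|\ \leq\ \varepsilon\,F^{i\bar j}U_{i\bar j} + \varepsilon^{-1}|\nabla u|^2\,\mathrm{tr}(F),
\end{equation*}
which is precisely where the quadratic dependence on $\sup_M|\nabla u|^2$ in the final estimate originates. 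Choosing $A$ large (to exploit the strict subsolution term $F^{i\bar j}(U_{i\bar j}-\underline U_{i\bar j})\leq -\delta_0$), $B$ large (to dominate on the flat part of the half-ball boundary), and $N$ moderate (to dominate on the curved part), the maximum principle yields $|T(u-\underline u)|\leq C\Psi$, and differentiating at $p_0$ in the normal direction bounds $|T\nu(u-\underline u)|(p_0)$ by $C(1+\sup_M|\nabla u|^2)$.

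For the double-normal bound, tangential-tangential and mixed estimates have controlled every entry of $U_{i\bar j}(p_0)$ except $u_{n\bar n}(p_0)$, appearing both through the Laplacian trace and the $n\bar n$ slot. The equation $\det U = \phi$ is then an affine equation in the remaining unknown whose coefficients are already bounded; the quantitative lemma in Appendix \ref{appendix1} (from \cite{yuan2017}) extracts an upper bound of the required form $C(1 + \sup_M|\nabla u|^2)$. In the mean pseudoconcave case, the Levi-form assumption $\kappa_1 + \cdots + \kappa_{n-1} \leq 0$ on $\partial M$ ensures that the tangential $(n-1)\times(n-1)$ principal minor of $U(p_0)$ has a uniform positive lower bound coming from the subsolution alone, which allows the quantitative lemma to be applied with constants independent of $(\inf_M\phi)^{-1}$. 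The main obstacle is Step 3: honestly tracking the new first-order commutator terms produced by $Z$ and showing they can be absorbed without contaminating the $|\nabla u|^2$ dependence, which requires new algebraic manipulations beyond the $Z=0$ case.
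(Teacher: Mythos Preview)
Your three-step outline is the right skeleton, and you correctly identify the quantitative lemma of Appendix~\ref{appendix1} as the tool for the double-normal step. However, two of your three steps contain genuine gaps that the paper's argument is specifically designed to close.

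\textbf{Mixed tangential-normal step.} Your barrier $\Psi = A(\underline u - u) + B\rho - N|z|^2 \pm T(u-\underline u)$ is too crude. When you apply $L$ to $\pm T u$, covariant differentiation produces terms of the form $F^{i\bar j}\Gamma^l_{ik}u_{l\bar j}$, which in a diagonalizing frame contribute $C\sum_i f_i|\lambda_i|$; these arise from the curvature of the background metric and appear even when $Z=0$, so they are not the ``first-order'' $Z$-commutators you isolate. Your proposed absorption $\varepsilon F^{i\bar j}U_{i\bar j} + \varepsilon^{-1}|\nabla u|^2\mathrm{tr}(F)$ cannot work here: for this operator $\sum_i f_i\lambda_i = n$ is a constant, so $F^{i\bar j}U_{i\bar j}$ is already bounded and buys you nothing against $\sum_i f_i|\lambda_i|$, which can be large. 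The paper therefore augments the barrier with the quadratic pieces $\frac{1}{\sqrt{b_1}}\sum_{\tau<n}|(u-\varphi)_\tau|^2$ and $\frac{1}{\sqrt{b_1}}(u_{y_n}-\varphi_{y_n})^2$. The first, via Guan's inequality $\sum_{\tau<n}F^{i\bar j}\tilde{\mathfrak g}_{\bar\tau i}\tilde{\mathfrak g}_{\tau\bar j}\geq \tfrac14\sum_{i\neq r}f_i\lambda_i^2$, manufactures a good term of exactly the right strength to absorb $\sum f_i|\lambda_i|$ after a weighted Cauchy--Schwarz; the second controls the $F^{i\bar j}u_{y_n i}u_{y_n\bar j}$ term coming from the $\widetilde\eta\,\partial_{x_n}$ part of $\mathcal T$. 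The paper also adds $N\sigma^2$ to $v$, whose contribution $2NF^{i\bar j}\sigma_i\sigma_{\bar j}$ is essential in the ``good eigenvalue ratio'' case. With this barrier the mixed bound is actually \emph{linear}, $|\tilde{\mathfrak g}(\xi_\alpha,J\bar\xi_n)|\leq C(1+\sup_M|\nabla u|)$; the quadratic dependence in the theorem arises only when this is fed into the double-normal step.

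\textbf{Double-normal step for general boundary.} The equation is not affine in $u_{n\bar n}$: since $u_{n\bar n}$ enters each $U_{\alpha\bar\alpha}$ through the trace but not $U_{n\bar n}$, $\det U=\phi$ is degree $n-1$ in it. More importantly, applying Lemma~\ref{yuan's-quantitative-lemma} requires a uniform \emph{positive lower bound} on the fixed entry $U_{n\bar n}(p_0)$ (equivalently on $\sum_\alpha \tilde{\mathfrak g}_{\alpha\bar\alpha}(p_0)=(1-t_0)\sum_\alpha\underline{\tilde{\mathfrak g}}_{\alpha\bar\alpha}(p_0)$). You correctly note that mean pseudoconcavity gives $t_0\leq 0$ and hence this lower bound for free from the subsolution. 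But without mean pseudoconcavity $t_0$ can approach $1$, and you give no mechanism to prevent it. The paper handles this with a separate, rather delicate construction (Lemmas~\ref{key-lemma-B1}--\ref{key-lemma-B2}): one builds a local $C^2$ supersolution $w$ with $\Lambda(\tilde{\mathfrak g}[w])\leq 0$, perturbs it so that its eigenvalue vector stays outside $\overline{\mathcal P}_{n-1}^{\inf_M\psi}$, and applies a comparison principle to bound $(1-t_0)^{-1}$. This is exactly the step where the dependence on $(\inf_M\phi)^{-1}$ enters, which your sketch does not explain.
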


 With Theorems \ref{thm-21-yuan} and \ref{STW-estimate1} at hand, we derive \eqref{sec-estimate-quar1} and then establish gradient estimate via the blow-up argument.
  %It is %noteworthy 
% worthwhile to note
 A somewhat remarkable fact to us is that we can prove the quantitative boundary estimate %\eqref{bdy-sec-estimate-quar1} 
 under a weaker assumption 
 %  \eqref{keykey-phi}.  
 $\tilde{\phi}=\phi^{1/(n-1)}\in C^{1,1}(\bar M)$.
 %which coincides exactly with the right-hand side of prescribed volume form \eqref{Prescribed-Volume-form1}. 
We observe in Proposition \ref{remark2.4} that Sz\'ekelyhidi-Tosatti-Weinkove's original proof for second estimate
 % of \eqref{sec-estimate-quar1}  \eqref{STW-estimate}
 also works under such a weak assumption.
 As a consequence we obtain Theorem \ref{them1-n-1-phi}.

 %$\mathrm{{(5)}}$
 %Our argument   %as the proof of Theorems  \ref{thm0-n-1-yuan3} and \ref{thm2-n-1-yuan3}  
 %allows us to treat  %deal with %in Section \ref{sec6} 
%more general fully nonlinear elliptic equations in Section \ref{sec6}, dating back to the work of Caffarelli-Nirenberg-Spruck \cite{CNS3}.
% on the Dirichlet problem on bounded domains $\Omega\subsetneq\mathbb{R}^n$. 
 %The case of  the equations involve neither $\partial u$ nor $\overline{\partial}u$
 %The equation \eqref{mainequ-standard}  is also considered by the author in \cite{yuan2017,yuan2019,yuan2021} (also \cite{yuan-V}).
 %Some other works on fully nonlinear equations in complex variables include \cite{Collins2019Picard,Guan1998The,Guan2010Li,GQY2018,Guan2015Sun,LiSY2004,yuan2018CJM,yuan2021cvpde,yuan2019PAMQ}.

 %\section{Preliminaries}
 \section{Preliminaries and lemmas}
 \label{sec2}
 
 \subsection{Background of the equation}

The equation \eqref{MA-n-1} %can be reduced to 
can be reformulated in the following form
\begin{equation}
	\label{mainequ-gauduchon}
	\begin{aligned}
		\log P_{n-1}(\lambda(\tilde{\mathfrak{g}}[u]))=\psi  \mbox{ and }
		\lambda(\tilde{\mathfrak{g}}[u])\in\mathcal{P}_{n-1}.
		% \mbox{ in }M,
	\end{aligned}
\end{equation}
%for %$\psi=(n-1)\phi+n\log (n-1)$,
Here 
\begin{equation}
	\label{P_n-1}
	\begin{aligned}
		P_{n-1}(\lambda)=\mu_1\cdots\mu_n, \quad
		\mu_i=\sum_{j\neq i}\lambda_j. %\nonumber
	\end{aligned}
\end{equation}
%with corresponding cone 
 \begin{equation} \label{yuan3-buchong112} \begin{aligned}
		\mathcal{P}_{n-1}=\{\lambda\in \mathbb{R}^n: \mu_i>0, \mbox{  } \forall 1\leq i\leq n\},
\end{aligned} \end{equation}
and $\psi=\log\phi+n\log(n-1)$,   
\begin{equation}
	\label{yuan3-buchong111}
	\begin{aligned}
		\tilde{\mathfrak{g}}_{i\bar j}=u_{i\bar j}+\check{\chi}_{i\bar j}+W_{i\bar j},
	\end{aligned}
\end{equation}
%$\psi=(n-1)\phi+n\log(n-1)$,
where $\check{\chi}_{i\bar j}=(\mathrm{tr}_\omega \tilde{\chi})g_{i\bar j}-(n-1)\tilde{\chi}_{i\bar j}$,
$W_{i\bar j}=(\mathrm{tr}_\omega Z)g_{i\bar j}-(n-1)Z_{i\bar j}$, 
%(see also  \cite{Popovici2015,TW19}).
%Locally,
\begin{equation}
	\label{Z-tensor1}
	\begin{aligned}
		Z_{i\bar j}=\,& \frac{g^{p\bar q}   \bar T^l_{ql} g_{i\bar j}u_p
			+g^{p\bar q} T^{k}_{pk} g_{i\bar j}u_{\bar q}
			-g^{k\bar l}g_{i\bar q} \bar T^q_{lj}u_k
			-g^{k\bar l}g_{q\bar j}T^{q}_{ki}u_{\bar l}
			-\bar T^{l}_{jl}u_i -T^{k}_{ik}u_{\bar j} }{2(n-1)}, %\nonumber
	\end{aligned}
\end{equation}
where %$T^k_{ij}$ are the torsions,
$T^{k}_{ij}= g^{k\bar l}  (\frac{\partial g_{j\bar l}}{\partial z_i} - \frac{\partial g_{i\bar l}}{\partial z_j}),$
see also \cite{STW17}.  

For simplicity we denote
\begin{equation}
	\begin{aligned}
		f(\lambda)=\log P_{n-1}(\lambda),
	\end{aligned}
\end{equation}
For any $\lambda\in \mathcal{P}_{n-1}$ we have the following simple properties:
\begin{equation}
	\label{formular26}
	\begin{aligned}
	  f_i(\lambda)=\sum_{j\neq i} \frac{1}{\mu_j}, \quad
		\sum_{i=1}^n f_i(\lambda)=(n-1)\sum_{i=1}^n \frac{1}{\mu_i}, 
	\end{aligned}
\end{equation}
\begin{equation}
	\label{yuan-37}
	\begin{aligned}
		f_i(\lambda)\geq \frac{\min_k{\mu_k}}{n\max_k{\mu_k}}\sum_{j=1}^n   f_j(\lambda), \quad \forall i,
	\end{aligned}
\end{equation}
\begin{equation}
	\label{formular27}
	\begin{aligned}
		\sum_{i=1}^n f_i(\lambda)\lambda_i=n, 
		\quad
		\sum_{i=1}^n f_i(\lambda)\underline{\lambda}_i=\sum_{i=1}^n\frac{\underline{\mu}_i}{\mu_i}.
		%\mbox{ } \underline{\mu}_i=\sum_{j\neq i}\underline{\lambda}_j,
	\end{aligned}
\end{equation}
Moreover %for $f(\lambda)=\sigma,$
\begin{equation}
	\label{sumf_i1}
	\begin{aligned}
		\sum_{i=1}^n f_i(\lambda)\geq
		% n(1+\log(n-1))e^{-\sigma/n},
		 n (n-1)e^{-\sigma/n},
	  \mbox{ for } f(\lambda)=\sigma,
	\end{aligned}
\end{equation}

\begin{lemma}
	\label{type-lemma1}
	Let $\underline{\lambda}\in \mathcal{P}_{n-1} $ and $\varepsilon=\inf_M \min_{i}\underline{\mu}_i$ (then $\varepsilon>0$). 
	Suppose that
	\begin{equation}
		\label{key-yuanrr-1}
		\begin{aligned}
			%\mathrm{dist}(\mu,\partial\Gamma_n)=
			\min_j\mu_j\leq \frac{\varepsilon}{2n}.
		\end{aligned}
	\end{equation}
	Then
	\begin{equation}
		\label{sumfi-3}
		\begin{aligned}
			\sum_{i=1}^n\frac{1}{\mu_i}\geq \frac{2n}{\varepsilon},
		\end{aligned}
	\end{equation}
	\begin{equation}
		\begin{aligned}
			\sum_{i=1}^n f_i(\lambda)(\underline{\lambda}_i-\lambda_i)\geq \frac{\varepsilon}{2}\sum_{i=1}^n \frac{1}{\mu_i}=
		\frac{\varepsilon}{2(n-1)}\sum_{i=1}^n f_i(\lambda).
		\end{aligned}
	\end{equation}
	
\end{lemma}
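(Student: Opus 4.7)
The plan is to derive the two conclusions in order, using only the listed identities in \eqref{formular26} and \eqref{formular27} together with the positivity hypothesis on $\underline{\lambda}$.

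First I would prove \eqref{sumfi-3}: since $\lambda \in \mathcal{P}_{n-1}$ every $\mu_j$ is positive, so
\[
\sum_{i=1}^n \frac{1}{\mu_i} \;\geq\; \frac{1}{\min_j \mu_j} \;\geq\; \frac{2n}{\varepsilon},
\]
where the last inequality is precisely the hypothesis \eqref{key-yuanrr-1}. This part is immediate.

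Next I would handle the main inequality by starting from the two identities in \eqref{formular27}, which together give
\[
\sum_{i=1}^n f_i(\lambda)(\underline{\lambda}_i - \lambda_i) \;=\; \sum_{i=1}^n \frac{\underline{\mu}_i}{\mu_i} - n.
\]
Using $\underline{\mu}_i \geq \varepsilon$ coordinate-wise gives $\sum_i \underline{\mu}_i/\mu_i \geq \varepsilon \sum_i 1/\mu_i$. Splitting this sum in half, the first half absorbs the subtracted $n$ thanks to \eqref{sumfi-3}, while the second half yields the desired lower bound
\[
\sum_{i=1}^n f_i(\lambda)(\underline{\lambda}_i - \lambda_i) \;\geq\; \frac{\varepsilon}{2}\sum_{i=1}^n \frac{1}{\mu_i}.
\]
The final equality $\frac{\varepsilon}{2}\sum \frac{1}{\mu_i} = \frac{\varepsilon}{2(n-1)}\sum f_i(\lambda)$ is then just the second identity in \eqref{formular26}.

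There is no real obstacle here: the lemma is purely algebraic and follows by chaining the identities \eqref{formular26}, \eqref{formular27} with the bound $\underline{\mu}_i \geq \varepsilon$ and the assumption \eqref{key-yuanrr-1}. The only point requiring mild care is ensuring that splitting the sum $\varepsilon\sum 1/\mu_i$ into two equal halves leaves enough to dominate $n$, which is exactly what \eqref{sumfi-3} provides and why the threshold in \eqref{key-yuanrr-1} is $\varepsilon/(2n)$ rather than $\varepsilon/n$.
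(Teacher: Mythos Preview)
Your proof is correct. The paper states this lemma without proof, treating it as an elementary consequence of the identities \eqref{formular26} and \eqref{formular27}; your argument is exactly the natural derivation one would supply, so there is nothing to compare.
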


\begin{remark}
	\label{lemma2.2}
	It only requires to consider the case when \eqref{key-yuanrr-1} holds. Otherwise for   
	$ \sum_{i=1}^n \log\mu_i=\sigma$, we have
	\begin{equation}
		\begin{aligned}
			\max_i\mu_i \leq (2n)^{n-1}e^\sigma/\varepsilon^{n-1}, \\ %\nonumber
		\end{aligned}
	\end{equation}
and
\begin{equation}
	\frac{\min_i\mu_i}{\max_i\mu_i}\geq \frac{\varepsilon^n}{(2n)^{n}e^\sigma}.
	\end{equation}
\end{remark}

 \vspace{1mm}
 Throughout this paper we denote 
 \[\mathfrak{\tilde{g}}=\mathfrak{\tilde{g}}[u],  
 \mbox{  } \mathfrak{\underline{\tilde{g}}}=\mathfrak{\tilde{g}}[\underline{u}],  
 \mbox{ } \underline{Z}=Z[\underline{u}],
 \mbox{  } Z=Z[u], 
 \mbox{  } W=W[u],
 \mbox{ } \underline{W}=W[\underline{u}]\]
 for solution $u$ and subsolution $\underline{u}$. In Sections \ref{sec3} and \ref{sec4} we denote
 \begin{equation}
 	\begin{aligned}
 		\,& \lambda=\lambda(\mathfrak{\tilde{g}}), \,& \underline{\lambda}=\lambda(\mathfrak{\underline{\tilde{g}}}).
 	\end{aligned}
 \end{equation}

 \subsection{Some computation and notation}
 Fix $x_0\in \partial M$ we can choose a local holomorphic coordinate system
 %$(z_1,\cdots,z_n)$, $z_i=x_i+\sqrt{-1}y_i$,
 \begin{equation}
 	\label{goodcoordinate1}
 	\begin{aligned}
 		(z_1,\cdots, z_n), \mbox{  } z_i=x_i+\sqrt{-1}y_i, 
 	\end{aligned}
 \end{equation}
 centered at $x_0$,
 % in a neighborhood which we assume to be contained in $M_\delta$,  \[ M_\delta:=\{z\in M: \sigma(z)<\delta\},\]
 such that $g_{i\bar j}(0)=\delta_{ij}$, $\frac{\partial}{\partial x_n}$ is the inner normal vector at the origin, and
 $T^{1,0}_{{x_0},{\partial M}}$ is spanned by $\frac{\partial}{\partial z_\alpha}$ for $1\leq\alpha\leq n-1$.
 We   denote $\sigma(z)$  the distance function from $z$ to $\partial M$ with respect to $\omega$. 
 Also we denote %B_\delta(0)=\{z\in M: |z|<\delta\}
 \begin{equation}
 	\label{def-omega1}
 	\begin{aligned}
 		\,& \Omega_\delta:=\{z\in M: |z|<\delta\}, \,& M_\delta:=\{z\in M: \sigma(z)<\delta\}.
 	\end{aligned}
 \end{equation}
 
 In the computation we use derivatives with respect to  Chern connection $\nabla$ of $\omega$,
 %In local coordinates $z=(z_1,\cdots, z_{n})$,
 and write
 $\partial_{i}=\frac{\partial}{\partial z_{i}}$, 
 $\overline{\partial}_{i}=\frac{\partial}{\partial \bar z_{i}}$,
 $\nabla_{i}=\nabla_{\frac{\partial}{\partial z_{i}}}$,
 $\nabla_{\bar i}=\nabla_{\frac{\partial}{\partial \bar z_{i}}}$.
 
 For a smooth function $v$,
 %\begin{equation} \label{jian5}\begin{aligned}
 $$
 v_i:=
 \partial_i v,  
 \mbox{  } v_{\bar i}:=
 \partial_{\bar i} v,
 \mbox{  } 
 v_{i\bar j}:= 
 %\nabla_{\bar j}\nabla_{i} v=
 \partial_i\overline{\partial}_j v, 
 \mbox{  } 
 v_{ij}:=
 \nabla_{j}\nabla_{i} v =
 \partial_i \partial_j v -\Gamma^k_{ji}v_k, \cdots \mbox{etc},
 $$
 %\end{aligned}end{equation}
 where $\Gamma_{ij}^k$ are the Christoffel symbols
 defined  by 
 $\nabla_{\frac{\partial}{\partial z_i}} \frac{\partial}{\partial z_j}=\Gamma_{ij}^k \frac{\partial}{\partial z_k}.$
 
 The boundary value condition implies 
 \begin{equation}
 	\label{yuan3-buchong5}
 	\begin{aligned}
 		\,&  u_\alpha(0)=\underline{u}_\alpha(0), \,&
 		u_{\alpha\bar\beta}(0)=\underline{u}_{\alpha\bar\beta}(0)+(u-\underline{u})_{x_n}(0)\sigma_{\alpha\bar\beta}(0)
 	\end{aligned}
 \end{equation}
 for $1\leq \alpha, \beta\leq n-1$.  
 % and the boundary estimates for pure tangential derivatives.
 Let $\check{u}$ be the solution to 
 \begin{equation}
 	\label{supersolution-1}
 	\begin{aligned}
 		\,&  \mathrm{tr}_{\omega} (\mathfrak{\tilde{g}}[\check{u}])=0 \mbox{ in } M,   \,& \check{u}=\varphi \mbox{ on } \partial M.
 	\end{aligned}
 \end{equation}
 The existence of $\check{u}$ follows from standard theory of elliptic equations.
 % Also we call such $\check{u}$  a supersolution of Dirichlet problem \eqref{MA-n-1} and \eqref{bdy-value-2}. 
 By the maximum principle and boundary value condition,  one derives
 the following:
 \begin{lemma}
 \begin{equation}
 	\label{key-14-yuan3}
 	\begin{aligned}
 		% u_\alpha(0)=\underline{u}_\alpha(0),
 		\underline{u}\leq u\leq \check{u} \mbox{ in } M, \quad 0\leq (u-\underline{u})_{x_n}(0) \leq (\check{u}-\underline{u})_{x_n}(0).  
 		%C_0 \mbox{ holds for a uniform positive constant } C_0.
 	\end{aligned}
 \end{equation}
 %Thus we obtain the $C^0$-estimate, and  boundary gradient  estimates
 In particular,
 \begin{equation}
 	\label{c-0-c1b}
 	\begin{aligned}
 		\sup_M|u|+\sup_{\partial M}|\nabla u| \leq C.
 	\end{aligned}
 \end{equation}
\end{lemma}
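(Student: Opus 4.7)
The plan is to establish $\underline{u}\leq u\leq\check{u}$ in $M$ by two applications of the comparison principle---with $\check{u}$ as an upper barrier and $\underline{u}$ as a lower barrier---and then read off the normal derivative inequality together with the $C^0$ and boundary gradient bounds from the fact that all three functions agree on $\partial M$.

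For the upper bound $u\leq\check{u}$, I would argue by contradiction: suppose $u-\check{u}$ attains a positive interior maximum at $x_0$. Then $\nabla(u-\check{u})(x_0)=0$ forces $W[u](x_0)=W[\check{u}](x_0)$, while $D^2(u-\check{u})(x_0)\leq 0$ gives $u_{i\bar j}(x_0)\leq\check{u}_{i\bar j}(x_0)$ and hence $\tilde{\mathfrak{g}}[u](x_0)\leq\tilde{\mathfrak{g}}[\check{u}](x_0)$ as Hermitian matrices. Taking traces,
\[
\mathrm{tr}_\omega\tilde{\mathfrak{g}}[u](x_0)\leq\mathrm{tr}_\omega\tilde{\mathfrak{g}}[\check{u}](x_0)=0,
\]
which contradicts $\lambda(\tilde{\mathfrak{g}}[u])\in\mathcal{P}_{n-1}$, since admissibility forces $\mathrm{tr}_\omega\tilde{\mathfrak{g}}[u]=\frac{1}{n-1}\sum_i\mu_i>0$.

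For the lower bound $u\geq\underline{u}$, I would linearize $f(\lambda)=\sum_i\log\mu_i$ along the affine segment in the convex cone $\mathcal{P}_{n-1}$ joining $\tilde{\mathfrak{g}}[\underline{u}]$ and $\tilde{\mathfrak{g}}[u]$. Since $\tilde{\mathfrak{g}}$ depends affinely on the $1$-jet of $v$ and $f^{i\bar j}>0$ on $\mathcal{P}_{n-1}$, this produces a linear elliptic operator $\mathcal{L}$ satisfying
\[
\mathcal{L}(\underline{u}-u)=f(\lambda(\tilde{\mathfrak{g}}[\underline{u}]))-f(\lambda(\tilde{\mathfrak{g}}[u]))=f(\lambda(\tilde{\mathfrak{g}}[\underline{u}]))-\psi\geq 0\quad\mbox{in }M.
\]
Since $\underline{u}-u=0$ on $\partial M$, the weak maximum principle gives $\underline{u}\leq u$ in $M$.

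The boundary statements are then immediate. Since $\underline{u}=u=\check{u}=\varphi$ on $\partial M$ and $\underline{u}\leq u\leq\check{u}$ in $M$, comparing inner normal derivatives at a reference point $x_0\in\partial M$ yields $0\leq (u-\underline{u})_{x_n}(0)\leq(\check{u}-\underline{u})_{x_n}(0)$. The tangential derivatives of $u$ on $\partial M$ coincide with those of $\varphi$, so this controls $|\nabla u|$ pointwise on $\partial M$, and the $C^0$ bound $\sup_M|u|\leq C$ is direct from $\underline{u}\leq u\leq\check{u}$. The only point requiring care is ensuring that the linearization in the lower-barrier comparison stays elliptic: this is guaranteed by convexity of $\mathcal{P}_{n-1}$ together with the affine dependence of $\tilde{\mathfrak{g}}$ on the $1$-jet, so the connecting segment stays inside $\mathcal{P}_{n-1}$ where $f^{i\bar j}>0$.
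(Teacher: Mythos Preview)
Your argument is correct and is precisely the standard maximum-principle argument the paper alludes to; the paper itself gives no details beyond the sentence ``By the maximum principle and boundary value condition, one derives the following.'' One small remark: in your lower-barrier step you invoke convexity of $\mathcal{P}_{n-1}$ to keep the connecting segment admissible, but strictly speaking what you need is convexity of the set of Hermitian matrices with eigenvalues in $\mathcal{P}_{n-1}$; this follows since $\lambda(A)\in\mathcal{P}_{n-1}$ is equivalent to $\mathrm{tr}\,A-\lambda_{\max}(A)>0$, and $\lambda_{\max}$ is convex on Hermitian matrices, so the matrix-level set is indeed convex.
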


 By \eqref{Z-tensor1}, \eqref{yuan3-buchong5} and $W[v]_{i\bar j}=(\mathrm{tr}_\omega Z[v])g_{i\bar j}-(n-1)Z[v]_{i\bar j}$, one can verify the following:
 \begin{lemma}
 	\label{key-lemma34}
 	At the origin ($\{z=0\}$),
 \begin{equation}
 	\label{yuan3-buchong12}
 	\begin{aligned} 
 		\sum_{\alpha=1}^{n-1}\tilde{\mathfrak{g}}[v]_{\alpha\bar\alpha} 
 		=  \sum_{\alpha=1}^{n-1}(v_{\alpha\bar\alpha}+ \check{\chi}_{\alpha\bar\alpha})+ \sum_{\alpha=1}^{n-1}W[v]_{\alpha\bar\alpha},
 		%= \,& \sum_{\alpha=1}^{n-1}(u_{\alpha\bar\alpha}+  \check{\chi}_{\alpha\bar\alpha})+ (n-1)Z[v]_{n\bar n}, %\nonumber
 	\end{aligned}
 \end{equation}
 %\end{lemma}
 % one has at the origin %(see e.g. \cite{STW17})
 \begin{equation}
 	\label{stw-44}
 	\begin{aligned}
 		2(n-1)Z[v]_{n\bar n}
 		%=\,&  \sum_{p,k=1}^n (\bar T^k_{pk}  v_p+ T^{k}_{pk}  v_{\bar p})-  \sum_{k=1}^n(\bar T^n_{kn}v_k + T^{n}_{kn}v_{\bar k} +\bar T^{k}_{nk}v_n + T^{k}_{nk}v_{\bar n} )  \\
 		=\,&  \sum_{\alpha, \beta=1}^{n-1}  (\bar T^\beta_{\alpha \beta}  v_\alpha
 		+ T^{\beta}_{\alpha \beta}  v_{\bar \alpha}), %\mbox{ (see e.g. \cite{STW17})}, %\nonumber
 		% =  \sum_{\alpha,\beta=1}^{n-1}   (\bar T^\beta_{\alpha \beta}  \underline{u}_\alpha+ T^{\beta}_{\alpha \beta}  \underline{u}_{\bar \alpha}) \nonumber  = 2(n-1)\underline{Z}_{n\bar n}.
 	\end{aligned}
 \end{equation}

 %At the origin, $Z_{n\bar n}=\underline{Z}_{n\bar n}$ and
 \begin{equation}
 	\label{lemma-B5}
 	\begin{aligned} 
 		\sum_{\alpha=1}^{n-1}\tilde{\mathfrak{g}}_{\alpha\bar\alpha} =
 		\sum_{\alpha=1}^{n-1}\underline{\tilde{\mathfrak{g}}}_{\alpha\bar\alpha}
 		+(u-\underline{u})_{x_n}\sum_{\alpha=1}^{n-1}\sigma_{\alpha\bar\alpha}. %\mbox{ (see e.g. \cite{yuan2019})}.  
 	\end{aligned}
 \end{equation}

 %We rewrite $W[v]_{i\bar j}$ as follows
 If in addition we take
 \begin{equation}
 	\label{w-rewrite1}
 	\begin{aligned}
 		W[v]_{i\bar j} = W_{i\bar j}^k v_k + W_{i\bar j}^{\bar k} v_{\bar k}, %\nonumber
 	\end{aligned}
 \end{equation}
 %From \eqref{yuan3-buchong11} and \eqref{stw-44}, 
 %One can see 
 then at the origin  
 \begin{equation}
 	\label{lemma-B4}
 	\begin{aligned}
 		\sum_{\alpha=1}^{n-1} W_{\alpha\bar\alpha}^n = \sum_{\alpha=1}^{n-1} W_{\alpha\bar\alpha}^{\bar n}=0.
 	\end{aligned}
 \end{equation}
 \end{lemma}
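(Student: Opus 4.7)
The plan is to verify the four identities by direct computation, exploiting the special coordinates at the origin and the antisymmetry of the torsion tensor $T^k_{ij}=-T^k_{ji}$ (coming from $T^k_{ij}=g^{k\bar l}(\partial_i g_{j\bar l}-\partial_j g_{i\bar l})$). At the origin the metric satisfies $g_{i\bar j}(0)=\delta_{ij}$, $g^{p\bar q}(0)=\delta^{pq}$, so all contractions with $g$ and $g^{-1}$ collapse to index substitutions.

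Identity \eqref{yuan3-buchong12} is immediate from the definition $\tilde{\mathfrak{g}}[v]_{i\bar j}=v_{i\bar j}+\check{\chi}_{i\bar j}+W[v]_{i\bar j}$, requiring no special coordinate choice. Identity \eqref{stw-44} is the main computational step: I would substitute \eqref{Z-tensor1} with $(i,j)=(n,n)$, replace each occurrence of $g_{i\bar j}$ and $g^{p\bar q}$ by Kronecker deltas, and then split each sum according to whether the remaining indices range over $\{1,\dots,n-1\}$ or equal $n$. The terms involving $v_n$ and $v_{\bar n}$ cancel once one uses $T^n_{nn}=0$: the pieces $\sum_l\bar T^l_{nl}v_n$ and $\sum_k T^k_{nk}v_{\bar n}$ in \eqref{Z-tensor1} kill the $l=n$ (resp.\ $k=n$) contributions from the first two sums, while the third and fourth sums remove the $n$-terms from the first two via the Kronecker delta $\delta_{nq}$. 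After the smoke clears one is left only with the tangential terms $\sum_{\alpha,\beta\le n-1}\bar T^{\beta}_{\alpha\beta}v_\alpha+T^{\beta}_{\alpha\beta}v_{\bar\alpha}$.

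For identity \eqref{lemma-B5} I would first observe, from $W[v]_{i\bar j}=(\mathrm{tr}_\omega Z[v])g_{i\bar j}-(n-1)Z[v]_{i\bar j}$ and $g_{i\bar j}(0)=\delta_{ij}$, that
\[
\sum_{\alpha=1}^{n-1}W[v]_{\alpha\bar\alpha}(0)=(n-1)\,\mathrm{tr}_\omega Z[v]-(n-1)\sum_{\alpha=1}^{n-1}Z[v]_{\alpha\bar\alpha}=(n-1)Z[v]_{n\bar n}(0).
\]
By \eqref{stw-44} this quantity depends only on the tangential derivatives $v_\alpha,v_{\bar\alpha}$ with $\alpha\le n-1$. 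Since $u_\alpha(0)=\underline{u}_\alpha(0)$ for $\alpha\le n-1$ by \eqref{yuan3-buchong5}, one concludes $\sum_{\alpha\le n-1}W[u]_{\alpha\bar\alpha}(0)=\sum_{\alpha\le n-1}W[\underline{u}]_{\alpha\bar\alpha}(0)$. Combining this with identity \eqref{yuan3-buchong12} applied to $u$ and to $\underline{u}$, and with the remaining relation $u_{\alpha\bar\alpha}(0)-\underline{u}_{\alpha\bar\alpha}(0)=(u-\underline{u})_{x_n}(0)\,\sigma_{\alpha\bar\alpha}(0)$ from \eqref{yuan3-buchong5}, yields \eqref{lemma-B5}.

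Finally \eqref{lemma-B4} is read off \eqref{stw-44}: since $\sum_{\alpha\le n-1}W[v]_{\alpha\bar\alpha}(0)=(n-1)Z[v]_{n\bar n}(0)$ is a linear functional of $v_k,v_{\bar k}$ whose $k=n$ coefficients vanish, the decomposition $W[v]_{i\bar j}=W^k_{i\bar j}v_k+W^{\bar k}_{i\bar j}v_{\bar k}$ forces $\sum_{\alpha\le n-1}W^n_{\alpha\bar\alpha}(0)=\sum_{\alpha\le n-1}W^{\bar n}_{\alpha\bar\alpha}(0)=0$. The main obstacle is thus the careful bookkeeping in step~\eqref{stw-44}; everything else follows from this identity combined with the boundary jet relations \eqref{yuan3-buchong5}.
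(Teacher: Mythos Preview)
Your proposal is correct and follows exactly the route the paper indicates: the paper itself does not spell out a proof but simply says that \eqref{yuan3-buchong12}--\eqref{lemma-B4} ``can be verified'' from \eqref{Z-tensor1}, \eqref{yuan3-buchong5} and the relation $W[v]_{i\bar j}=(\mathrm{tr}_\omega Z[v])g_{i\bar j}-(n-1)Z[v]_{i\bar j}$, and your argument carries out precisely this verification. Your key reduction $\sum_{\alpha\le n-1}W[v]_{\alpha\bar\alpha}(0)=(n-1)Z[v]_{n\bar n}(0)$, combined with \eqref{stw-44} and the tangential jet relations \eqref{yuan3-buchong5}, is the intended mechanism behind both \eqref{lemma-B5} and \eqref{lemma-B4}.
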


 \section{Quantitative boundary estimate for pure normal derivative}
 \label{sec3}
 
 The goal of this section is to derive a quantitative version of boundary estimate for pure normal derivative.
 Before stating it, we   denote  an orthonormal basis of 
 $T^{1,0}_{\partial M}:= T^{1,0}_{\bar M} \cap T^{\mathbb{C}}_{ \partial M}$ by
 \begin{equation}
 	\label{xi1-yuan3}
 	\begin{aligned}
 		\xi_1, \cdots, \xi_{n-1}. % \nonumber
 	\end{aligned}
 \end{equation} 
 As above, $\nu$ denotes the unit inner normal vector along the boundary. Let
 \begin{equation}
 	\label{xi2-yuan3}
 	\begin{aligned}
 		%\,& T^{1,0}_{\partial M}=T^{1,0}_{\bar M} \cap T^{\mathbb{C}}_{ \partial M},   
 		\xi_n=\frac{1}{\sqrt{2}}\left(\mathrm{{\bf \nu}}-\sqrt{-1}J\nu\right). %\nonumber
 	\end{aligned}
 \end{equation}

 \begin{proposition}
 	\label{proposition-quar-yuan2}
 	Let $(M, J,\omega)$ be a compact Hermitian manifold with $C^3$ boundary.
 	%let $\psi\in C^0(\bar M)$ and $\varphi\in C^2(\partial M)$.  
 	In addition we assume \eqref{subsolution-MA-n-1} is satisfied.
 	For any strictly $(n-1)$-PSH solution  $u\in  C^{2}(\bar M)$ to the 
 	Dirichlet problem \eqref{MA-n-1} and \eqref{bdy-value-2}, 
 	then there is a uniform positive constant $C$ depending on   % $(\inf_M\phi)^{-1}$, 
 	$|u|_{C^0(M)}$,   $|\nabla u|_{C^0(\partial M)}$,  $\sup_M\phi$,
 	%$\sup_{\bar M} (\mathrm{disc}(\lambda(\mathfrak{\underline{\tilde{g}}}),\partial\mathcal{P}_{n-1}))^{-1}$, 
 	$|\underline{u}|_{C^{2}(\bar M)}$,  $\partial M$ up to third derivatives 
 	and other known data, such that 
 	\begin{equation}
 		\label{yuan-prop2}
 		\begin{aligned}
 			%\mathfrak{\tilde{g}}(\xi_n, J\bar \xi_n)(x_0) 
 			\mathrm{tr}_\omega(\mathfrak{\tilde{g}})(x_0)  \leq C\left(1 +  \sum_{\alpha=1}^{n-1} |\mathfrak{\tilde{g}}(\xi_\alpha, J\bar \xi_n)(x_0)|^2\right),  \mbox{  } \forall x_0\in\partial M.
 		\end{aligned}
 	\end{equation}
 	Moreover, if $\partial M$ is mean pseudoconcave then the constant $C$  is independent of  $(\inf_M\phi)^{-1}$
 	and only depends on $\partial M$ up to second derivatives and other known data under control.
 \end{proposition}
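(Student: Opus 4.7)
The plan is to fix an arbitrary $x_0\in\partial M$, work in the holomorphic coordinates \eqref{goodcoordinate1} centered at $x_0$, and reduce \eqref{yuan-prop2} to a purely pointwise algebraic inequality for the Hermitian matrix $\tilde{\mathfrak{g}}(0)$ subject to the cone condition $\lambda(\tilde{\mathfrak{g}})\in\mathcal{P}_{n-1}$ and the equation $\log P_{n-1}(\lambda(\tilde{\mathfrak{g}}))=\psi$. At the origin, $\xi_n$ is a positive multiple of $\partial/\partial z_n$ and $J$ acts as $\sqrt{-1}$ on $(1,0)$-vectors, so a short computation gives $|\tilde{\mathfrak{g}}(\xi_\alpha,J\bar\xi_n)|^2=|\tilde{\mathfrak{g}}_{\alpha\bar n}|^2$ for $\alpha<n$. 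Writing $\tilde{\mathfrak{g}}(0)$ in block form with tangential part $A'=(\tilde{\mathfrak{g}}_{\alpha\bar\beta})_{\alpha,\beta<n}$, mixed vector $V=(\tilde{\mathfrak{g}}_{\alpha\bar n})_{\alpha<n}$, and pure normal entry $R=\tilde{\mathfrak{g}}_{n\bar n}$, the claim becomes $\mathrm{tr}(A')+R\le C(1+|V|^2)$, and the real task is to bound $R$.

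I would next produce a uniform lower bound for $\mathrm{tr}(A')$. Lemma \ref{key-lemma34}, specifically \eqref{lemma-B5}, gives
\[
\mathrm{tr}(A')=\sum_{\alpha<n}\underline{\tilde{\mathfrak{g}}}_{\alpha\bar\alpha}(0)+(u-\underline u)_{x_n}(0)\sum_{\alpha<n}\sigma_{\alpha\bar\alpha}(0),
\]
while \eqref{key-14-yuan3} supplies $0\le(u-\underline u)_{x_n}(0)\le C$. The strict $(n-1)$-plurisubharmonicity of $\underline u$ yields $\underline\mu_i\ge\varepsilon>0$ on $\bar M$, from which the tangential diagonal sum $\sum_{\alpha<n}\underline{\tilde{\mathfrak{g}}}_{\alpha\bar\alpha}$ at $0$ inherits a positive lower bound (via the interlacing relations between $\underline\lambda$, $\underline\mu$, and the tangential block). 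When $\partial M$ is mean pseudoconcave, $\sum_{\alpha<n}\sigma_{\alpha\bar\alpha}(0)=-(\kappa_1+\cdots+\kappa_{n-1})\ge 0$ by \eqref{mean-pseudoconcave1}, so the correction helps rather than hurts and no use of $(u-\underline u)_{x_n}$ or $\sup|\nabla u|$ is required; in the general case the correction is absorbed into $C$ at the cost of dependence on $\sup_M\phi$ through the supersolution bound in \eqref{key-14-yuan3}.

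The decisive step is then to invoke the quantitative spectral lemma of \cite{yuan2017} recalled in Appendix \ref{appendix1}. Its content, morally, is that for a Hermitian matrix in $\mathcal{P}_{n-1}$ with $\log P_{n-1}(\lambda)\le\sup\psi$ and a uniformly positive lower bound on the tangential trace, one has $R\le C(1+|V|^2)$. The mechanism is easy to anticipate: if $R\gg 0$, the largest eigenvalue is $\approx R$ and hence every $\mu_i$ except one is of order $R$, so boundedness of $P_{n-1}(\lambda)=\prod\mu_i=e^\psi$ forces $\mu_{i^\ast}=\sum_{j\ne i^\ast}\lambda_j\lesssim R^{-(n-1)}e^\psi$; Cauchy interlacing of $\tilde{\mathfrak{g}}$ against $A'$ gives $\mu_{i^\ast}\ge\mathrm{tr}(A')-C|V|^2/R$, and combining the two inequalities yields $R\cdot\mathrm{tr}(A')\le C(|V|^2+R^{-(n-2)}e^\psi)$, so that the uniform positive lower bound on $\mathrm{tr}(A')$ delivers the claim.

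The main technical obstacle, and the reason the proof is ``much more complicated'' than in the case $Z=0$, is keeping the off-diagonal contributions of $W$ under control. By \eqref{lemma-B4} the diagonal tangential entries at the origin do not see $u_{x_n}$, so the lower bound for $\mathrm{tr}(A')$ is genuinely independent of the normal derivative of $u$; the mixed entries $\tilde{\mathfrak{g}}_{\alpha\bar n}$ do carry first-order pieces through $W_{\alpha\bar n}$, but these appear only inside $|V|^2$ on the right-hand side of \eqref{yuan-prop2}, which is exactly what the statement permits. The delicate point is to ensure that the constants produced by the spectral lemma depend on $|\underline u|_{C^2}$, $\sup\psi$, $|\varphi|_{C^1}$, and the geometry of $\partial M$ only, with no hidden dependence on $u_{x_n}$; this traces back to the cancellation \eqref{lemma-B4} and, in the mean pseudoconcave case, to the sign in \eqref{mean-pseudoconcave1} removing the dependence on $(\inf_M\phi)^{-1}$ entirely.
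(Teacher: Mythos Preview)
Your mean pseudoconcave case is essentially the paper's Case~1 and is correct: $\sum_{\alpha<n}\sigma_{\alpha\bar\alpha}(0)\geq 0$ gives $\mathrm{tr}(A')\geq\sum_{\alpha<n}\underline{\tilde{\mathfrak g}}_{\alpha\bar\alpha}(0)\geq c>0$ uniformly, and then the quantitative lemma of Appendix~\ref{appendix1} (which the paper applies to $\tilde A(R)=R\,I_n-\tilde{\mathfrak g}$ rather than via your interlacing heuristic, but to the same effect) delivers \eqref{yuan-prop2}.

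The genuine gap is the general case. You assert that ``the correction is absorbed into $C$ at the cost of dependence on $\sup_M\phi$'', but the bound $0\leq (u-\underline{u})_{x_n}(0)\leq C$ from \eqref{key-14-yuan3} only yields
\[
\mathrm{tr}(A')\;\geq\;\sum_{\alpha<n}\underline{\tilde{\mathfrak g}}_{\alpha\bar\alpha}(0)-C\Bigl|\sum_{\alpha<n}\sigma_{\alpha\bar\alpha}(0)\Bigr|,
\]
and nothing prevents the right-hand side from being zero or negative at a particular $x_0$. Your own spectral heuristic then collapses: the inequality $R\cdot\mathrm{tr}(A')\leq C\bigl(|V|^2+R^{-(n-2)}e^\psi\bigr)$ gives no control on $R$ unless $\mathrm{tr}(A')$ is bounded below by a \emph{positive} constant. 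In the paper this positive lower bound (equivalently, an upper bound for $(1-t_0)^{-1}$ with $t_0$ as in \eqref{t0-2}) is the content of Lemma~\ref{key-lemma-B1}, proved by a substantial barrier argument (Lemma~\ref{key-lemma-B2}): one constructs near $x_0$ a $C^2$ function $w$ with $\Lambda(\tilde{\mathfrak g}[w])\leq 0$ in $\Omega_\delta$ and $u\leq w$ on $\partial\Omega_\delta$, perturbs it to $h=w+\epsilon(|z|^2-x_n/C_2)$ so that $\lambda(\tilde{\mathfrak g}[h])\notin\overline{\mathcal P}_{n-1}^{\inf_M\psi}$, and invokes the comparison principle to sharpen the bound on $(u-\underline u)_{x_n}(0)$. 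This barrier step is precisely where the dependence on $(\inf_M\phi)^{-1}$ --- not $\sup_M\phi$, as you write --- and on third derivatives of $\partial M$ enters, in agreement with the proposition's stated hypotheses and its final sentence.
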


 \subsection{First ingredient and its proof}
 
 Let $\eta=(u-\underline{u})_{x_n}(0)$. 
 We know that $\eta\geq 0$. 
 Let  
 \begin{equation}
 	\label{t0-2}
 	\begin{aligned}
 		t_0=-{\eta \sum_{\alpha=1}^{n-1}\sigma_{\alpha\bar\alpha}(0)}/{\sum_{\alpha=1}^{n-1}\underline{\tilde{\mathfrak{g}}}_{\alpha\bar\alpha}(0)}.
 	\end{aligned}
 \end{equation}
 Obviously, $t_0<1$.   
 From \eqref{lemma-B5}, we have at the origin
 \begin{equation}
 	\label{t0-2-buchong1}
 	\begin{aligned} 
 		\sum_{\alpha=1}^{n-1}\tilde{\mathfrak{g}}_{\alpha\bar\alpha} =
 		(1-t_0) \sum_{\alpha=1}^{n-1}\underline{\tilde{\mathfrak{g}}}_{\alpha\bar\alpha}.
 	\end{aligned}
 \end{equation}

 \begin{lemma}
 	\label{yuan-k2v}
 	There is a uniform positive constant $C$ depending on $(1-t_0)^{-1}$, $\sup_M\phi$ and other known data such that 
 	%at the origin
 	\begin{equation}
 		\begin{aligned}
 			\mathrm{tr}_\omega(\mathfrak{\tilde{g}})\leq C\left(1+\sum_{\alpha=1}^{n-1}|\mathfrak{\tilde{g}}_{\alpha\bar n}|^2\right).  \nonumber
 		\end{aligned}
 	\end{equation}
 \end{lemma}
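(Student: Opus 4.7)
The plan is to recast the equation in determinantal form and reduce the lemma to a linear-algebra statement about one Hermitian positive-definite matrix. Indeed, setting $\hat{\tilde{\mathfrak{g}}}:=(\mathrm{tr}_\omega\tilde{\mathfrak{g}})I-\tilde{\mathfrak{g}}$, its eigenvalues are $\mu_i=T-\lambda_i$ with $T:=\mathrm{tr}_\omega\tilde{\mathfrak{g}}$; it is positive definite since $\lambda\in\mathcal{P}_{n-1}$, and \eqref{mainequ-gauduchon} reads $\det(\hat{\tilde{\mathfrak{g}}})=(n-1)^n\phi$. Direct inspection identifies its bottom-right entry with the tangential sum, so \eqref{t0-2-buchong1} yields
\[
\hat{\tilde{\mathfrak{g}}}_{n\bar n}=\hat D:=(1-t_0)\sum_{\alpha=1}^{n-1}\underline{\tilde{\mathfrak{g}}}_{\alpha\bar\alpha}\ge c(1-t_0)>0.
\]

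Next, I would partition $\hat{\tilde{\mathfrak{g}}}=\bigl(\begin{smallmatrix}\hat A&\hat B\\\hat B^*&\hat D\end{smallmatrix}\bigr)$ with tangential block $\hat A=TI_{n-1}-\tilde{\mathfrak{g}}|_{\mathrm{tang}}$ positive definite and column $\hat B_\alpha=-\tilde{\mathfrak{g}}_{\alpha\bar n}$, and apply the Schur-complement determinant identity
\[
\hat D\,\det\hat A=(n-1)^n\phi+\hat B^*\mathrm{adj}(\hat A)\,\hat B.
\]
Since the eigenvalues $a_\alpha$ of $\hat A$ are positive and sum to $(n-1)T-\hat D$, the Maclaurin inequality on the $(n-2)$-nd elementary symmetric polynomial bounds the operator norm of $\mathrm{adj}(\hat A)$ by $CT^{n-2}$, giving $\hat B^*\mathrm{adj}(\hat A)\hat B\le CT^{n-2}\sum_\alpha|\tilde{\mathfrak{g}}_{\alpha\bar n}|^2$. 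Together with a matching lower bound $\det(\hat A)\ge c\,T^{n-1}$, the Schur identity forces
\[
c(1-t_0)T^{n-1}\le (n-1)^n\sup_M\phi+CT^{n-2}\sum_{\alpha=1}^{n-1}|\tilde{\mathfrak{g}}_{\alpha\bar n}|^2,
\]
and dividing by $T^{n-2}$ (the regime $T\le 1$ being trivial) gives the claimed bound with constant $C(1-t_0)^{-1}$.

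The hard step is justifying the lower bound $\det(\hat A)\ge c\,T^{n-1}$: a priori a tangential eigenvalue of $\tilde{\mathfrak{g}}$ could be of order $T$, balanced by another of order $-T$ so that the tangential trace $\hat D$ stays bounded, driving one $a_\alpha=T-\tau_\alpha$ to zero. To rule this out I would exploit the explicit affine dependence of the tangential entries $\tilde{\mathfrak{g}}_{\alpha\bar\beta}(0)$ on the single free parameter $\eta:=(u-\underline u)_{x_n}(0)$, inherited from \eqref{yuan3-buchong5} and the cancellation \eqref{lemma-B4} of Lemma \ref{key-lemma34}; the summed $\eta$-coefficient is precisely $\sum_\alpha\sigma_{\alpha\bar\alpha}(0)$, the quantity defining $t_0$. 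Careful bookkeeping of this structure precludes any individual tangential eigenvalue of size $\Theta(T)$ without violating the lower bound $\hat D\ge c(1-t_0)>0$, producing $\det(\hat A)\ge c(1-t_0)^{n-2}T^{n-1}$ and hence the lemma with a constant depending on $(1-t_0)^{-1}$, $\sup_M\phi$, and $|\underline u|_{C^2(\bar M)}$, as stated.
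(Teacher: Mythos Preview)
Your Schur-complement approach is sound and genuinely different from the paper's route, but you have mislocated the difficulty. The lower bound $\det(\hat A)\ge cT^{n-1}$ is in fact immediate once you observe that the entire tangential block $(\tilde{\mathfrak g}_{\alpha\bar\beta}(0))_{1\le\alpha,\beta\le n-1}$ is entrywise bounded in terms of known data. Indeed, by \eqref{yuan3-buchong5} each $u_{\alpha\bar\beta}(0)=\underline u_{\alpha\bar\beta}(0)+\eta\,\sigma_{\alpha\bar\beta}(0)$ with $\eta$ bounded via \eqref{key-14-yuan3}; the term $\check\chi_{\alpha\bar\beta}$ is given data; and $W_{\alpha\bar\beta}(0)$ is linear in $\nabla u(0)$, which is controlled on $\partial M$ by \eqref{c-0-c1b}. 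Hence each tangential eigenvalue $\tau_\alpha$ satisfies $|\tau_\alpha|\le C_0$, so for $T\ge 2C_0$ one has $a_\alpha=T-\tau_\alpha\ge T/2$ and $\det(\hat A)\ge (T/2)^{n-1}$, with no dependence on $(1-t_0)$ at this step. The factor $(1-t_0)^{-1}$ in the final constant enters only through the lower bound $\hat D\ge c(1-t_0)$, exactly as in your first display. Your suggested obstruction---one $\tau_\alpha$ of order $T$ balanced by another of order $-T$---simply cannot occur, and the appeal to the affine structure in $\eta$ and to \eqref{lemma-B4} is unnecessary here.

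For comparison, the paper instead diagonalizes the tangential block and applies a quantitative eigenvalue-localization lemma (Lemma~\ref{yuan's-quantitative-lemma}) to the full matrix $RI_n-\tilde{\mathfrak g}$: once $R$ exceeds an explicit threshold $R_s$ of the form $C(1-t_0)^{-1}\sum_\alpha|\tilde{\mathfrak g}_{\alpha\bar n}|^2+C$, the eigenvalues of $RI_n-\tilde{\mathfrak g}$ are $\epsilon$-close to its diagonal entries, whence $f(\lambda(RI_n-\tilde{\mathfrak g}))\ge\psi$ and so $T\le R_s$. The paper also needs (and tacitly uses) the boundedness of the tangential $\tilde{\mathfrak g}_{\alpha\bar\alpha}$ to conclude. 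Your argument is more elementary---just the Schur determinant identity and AM--GM, no perturbation lemma---but is tailored to the determinantal structure of the Monge--Amp\`ere equation; the paper's lemma, by contrast, is formulated to also serve the general equations of Section~\ref{sec6}.
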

 
 \begin{proof}
 	
 	The argument %is inspired by an idea of
 	is based on Lemma \ref{yuan's-quantitative-lemma} proposed in \cite{yuan2017} (also see \cite{yuan-V}).   Around $x_0$ we use the local holomorphic coordinates  %$(z_1,\cdots,z_n)$, $z_i=x_i+\sqrt{-1}y_i$, 
 	that we have chosen in \eqref{goodcoordinate1};
 	%centered at $x_0$, such that $g_{i\bar j}(0)=\delta_{ij}$ and $\frac{\partial}{\partial x_n}$ is the inner normal vector at the origin.
 	% as used in proof of Proposition \ref{proposition-quar-yuan1},
 	furthermore, we assume that $ ({\tilde{\mathfrak{g}}}_{\alpha\bar\beta})$ is diagonal at the origin $x_0=\{z=0\}$.
 	%and  $\frac{\partial}{\partial z_n}$ at $x_0$ is the unit inner normal vector. 
 	In the proof the discussion is done at the origin, and 
 	the Greek letters, such as $\alpha, \beta$, range from $1$ to $n-1$.
 	%We now give the proof of Proposition \ref{proposition-quar-yuan1}.
 	%Let  $\mu_i=\sum_{j\neq i} \lambda_j$, and  ${f}(\mu)=\sum_{i=1}^n \log\mu_i.$
 	Let's denote
 	\begin{equation}
 		{\tilde{A}}(R)=\left(
 		\begin{matrix}
 			R-\tilde{\mathfrak{{g}}}_{1\bar 1}&&  &-\tilde{\mathfrak{g}}_{1 \bar n}\\
 			%&R-\tilde{\mathfrak{{g}}}_{22}&& &-\tilde{\mathfrak{g}}_{2n}\\
 			&\ddots&&\vdots \\
 			& &  R-\tilde{\mathfrak{{g}}}_{{(n-1)} \overline{(n-1)}}&- \tilde{\mathfrak{g}}_{(n-1) \bar n}\\
 			-\tilde{\mathfrak{g}}_{n \bar 1}&\cdots& -\tilde{\mathfrak{g}}_{n \overline{(n-1)}}& \sum_{\alpha=1}^{n-1}\tilde{\mathfrak{g}}_{\alpha \bar\alpha}  \nonumber
 		\end{matrix}
 		\right),
 	\end{equation}
 	\begin{equation}
 		\tilde{\underline{A}}(R)=\left(
 		\begin{matrix}
 			R-\tilde{\mathfrak{{g}}}_{1\bar 1}&&  &-\tilde{\mathfrak{g}}_{1\bar n}\\
 			%&R-\tilde{\mathfrak{{g}}}_{22}&& &-\tilde{\mathfrak{g}}_{2n}\\
 			&\ddots&&\vdots \\
 			& & R-\tilde{\mathfrak{{g}}}_{{(n-1)}  \overline{(n-1)}}&- \tilde{\mathfrak{g}}_{(n-1) \bar n}\\
 			-\tilde{\mathfrak{g}}_{n \bar1}&\cdots& -\tilde{\mathfrak{g}}_{n \overline{(n-1)}}& (1-t_0)\sum_{\alpha=1}^{n-1}\underline{\tilde{\mathfrak{g}}}_{\alpha\bar\alpha} \nonumber
 		\end{matrix}
 		\right).
 	\end{equation}
 	%\begin{equation}\tilde{\underline{A}}(R)=\left(\begin{matrix}R-\tilde{\mathfrak{{g}}}_{1\bar 1}&&  &-\tilde{\mathfrak{g}}_{1\bar n}\\&\ddots&&\vdots \\ & & R-\tilde{\mathfrak{{g}}}_{{(n-1)}  \overline{(n-1)}}&- \tilde{\mathfrak{g}}_{(n-1) \bar n}\\ -\tilde{\mathfrak{g}}_{n \bar1}&\cdots& -\tilde{\mathfrak{g}}_{n \overline{(n-1)}}& \sum_{\alpha=1}^{n-1}\underline{\tilde{\mathfrak{g}}}_{\alpha\bar\alpha}  \nonumber\end{matrix}\right),\end{equation}
 	In particular, when $R=\mathrm{tr}_\omega (\tilde{\mathfrak{{g}}})$, ${\tilde{A}}(R)=\mathrm{tr}_\omega(\mathfrak{\tilde{g}}) \omega-\mathfrak{\tilde{g}}$.
 	By \eqref{lemma-B5} and \eqref{t0-2},
 	% $\tilde{{A}}(R)$ can be rewritten as
 	%That is 
 	$$\tilde{\underline{A}}(R)={\tilde{A}}(R).$$
 	
 	One can see that there is a uniform positive constant $R_0$ depending on $(1-t_0)^{-1}$ and $(\inf_{\partial M}\mathrm{dist}(\underline{\lambda},\partial \Gamma_n))^{-1}$ (but not on $(\inf_M\phi)^{-1}$) such that
 	$${f}\left(R_0,\cdots, R_0,(1-t_0)\sum_{\alpha=1}^{n-1}\underline{\tilde{\mathfrak{g}}}_{\alpha \bar\alpha}\right) > \psi.$$
 	Therefore, there is a positive constant $\varepsilon_{0}$, %$R_{0}$
 	depending  on $\inf_{\partial M}\mathrm{dist}(\underline{\lambda},\partial \Gamma_n)$,  such that 
 	\begin{equation}
 		\label{opppp-Gauduchon}
 		\begin{cases}
 			f\left(R_0-\varepsilon_{0},\cdots, R_0-\varepsilon_{0},(1-t_0)\sum_{\alpha=1}^{n-1}\underline{\tilde{\mathfrak{g}}}_{\alpha\bar\alpha}-\varepsilon_{0}\right)\geq \psi, 
 			\\
 			%(R_0-\varepsilon_{0},\cdots, R_0-\varepsilon_{0},(1-t_0)\sum_{\alpha=1}^{n-1}\underline{\tilde{\mathfrak{g}}}_{\alpha\bar\alpha}-\varepsilon_{0})\in \Gamma_n.
 			  R_0>\varepsilon_{0}, \mbox{ }  (1-t_0)\sum_{\alpha=1}^{n-1}\underline{\tilde{\mathfrak{g}}}_{\alpha\bar\alpha}>\varepsilon_{0}.
 		\end{cases}
 	\end{equation}

 	Note that 
 	\begin{equation}
 		{\tilde{A}}(R)={\tilde{\underline{A}}}(R)=RI_n-\left(
 		\begin{matrix}
 			\tilde{\mathfrak{{g}}}_{1\bar 1}&&  &\tilde{\mathfrak{g}}_{1 \bar n}\\
 			%&R-\tilde{\mathfrak{{g}}}_{22}&& &-\tilde{\mathfrak{g}}_{2n}\\
 			&\ddots&&\vdots \\
 			& &  \tilde{\mathfrak{{g}}}_{{(n-1)} \overline{(n-1)}}& \tilde{\mathfrak{g}}_{(n-1) \bar n}\\
 			\tilde{\mathfrak{g}}_{n \bar 1}&\cdots& \tilde{\mathfrak{g}}_{n \overline{(n-1)}}&
 			R-(1-t_0) \sum_{\alpha=1}^{n-1}\tilde{\mathfrak{\underline{g}}}_{\alpha \bar\alpha}  \nonumber
 		\end{matrix}
 		\right)
 	\end{equation}
 	here $I_n= \left(\delta_{ij}\right)$.
 	Let's pick  $\epsilon=\frac{\varepsilon_0(1-t_0)}{2(n-1)}$ in Lemma \ref{yuan's-quantitative-lemma} then set
 	\begin{equation}
 		\begin{aligned}
 			R_s=\,& \frac{2(n-1)(2n-3)}{\varepsilon_0(1-t_0)}
 			\sum_{\alpha=1}^{n-1} | \tilde{\mathfrak{g}}_{\alpha \bar n}|^2
 			+ (n-1)\sum_{\alpha=1}^{n-1} | \tilde{\mathfrak{{g}}}_{\alpha \bar\alpha}| 
 			%   +\frac{(n-2)\varepsilon_0}{2(n-1)(2n-3)} 
 			%  \\\,&
 			+ (1-t_0)\sum_{\alpha=1}^{n-1} | \tilde{\mathfrak{\underline{g}}}_{\alpha \bar\alpha}|
 			% +\sum_{\alpha=1}^{n-1} |\underline{\lambda}'_\alpha| 
 			+R_0, \nonumber
 		\end{aligned}
 	\end{equation}
 	where $\varepsilon_0$ and $R_0$ are constants from \eqref{opppp-Gauduchon}.
 	Let $\lambda(\tilde{\underline{A}}(R_s))=(\lambda_1(R_s),\cdots,\lambda_n(R_s))$ denote
 	the eigenvalues of $\tilde{\underline{A}}(R_s)$.
 	%(possibly with an order) shall behave like
 	According to Lemma  \ref{yuan's-quantitative-lemma},
 	\begin{equation}
 		\label{lemma12-yuan-Gauduchon}
 		\begin{aligned}
 			%\lambda(\tilde{\underline{A}}(R_s)) \in(R_s- \tilde{\mathfrak{g}}_{1\bar 1}-\frac{\varepsilon_0}{2(n-1)},\cdots, R_s- \tilde{\mathfrak{g}}_{(n-1)  \overline{(n-1)}}-\frac{\varepsilon_0}{2(n-1)},\sum_{\alpha=1}^{n-1}\tilde{\mathfrak{\underline{g}}}_{\alpha\bar \alpha}-\frac{\varepsilon_0}{2}) +\overline{ \Gamma}_n \subset \Gamma.
 			\lambda_\alpha(R_s) \geq \,& R_s- \tilde{\mathfrak{g}}_{\alpha\bar \alpha}-\frac{\varepsilon_0}{2(n-1)},
 			\quad \forall 1\leq \alpha \leq n-1, \\
 			\lambda_n(R_s) \geq \,& (1-t_0)\sum_{\alpha=1}^{n-1}\tilde{\mathfrak{\underline{g}}}_{\alpha\bar \alpha}
 			-\frac{\varepsilon_0}{2}.  \nonumber
 		\end{aligned}
 	\end{equation}
 	%in particular, $\lambda(\tilde{\underline{A}}(R_s))\in\Gamma$. 
 	Therefore % $\lambda(\tilde{{A}}(R_s))\in \Gamma$ and % one has 
 	\begin{equation}
 		\label{puretangential2-gauduchon}
 		\begin{aligned}
 			{f}(\lambda(\tilde{{A}}(R_s)))\geq \psi. \nonumber
 		\end{aligned}
 	\end{equation}
 	%Putting \eqref{elliptic},  \eqref{opppp-Gauduchon}, \eqref{puretangential2-gauduchon} and \eqref{lemma12-yuan-Gauduchon} together,  
 	We get
 	$$\mathrm{tr}_\omega(\tilde{\mathfrak{g}}) \leq R_s.$$
 	
 	%Consequently,  together with Lemma \ref{key-lemma-B1} and Proposition \ref{mix-general-2}, we derive Proposition \ref{proposition-quar-yuan2} and so the following quantitative boundary estimate.
 	
 \end{proof}

 \subsection{Second ingredient and the proof}
 
 According to Lemma \ref{yuan-k2v} 
 it requires only to prove that $(1-t_0)^{-1}$ can be uniformly bounded from above. That is,
 \begin{equation}
 	\label{1-t_0}
 	\begin{aligned}
 		(1-t_0)^{-1}\leq C.
 	\end{aligned}
 \end{equation}
 
 \subsubsection*{\bf Case 1: The boundary $\partial M$ is mean pseudoconcave.}
 Note $\eta=(u-\underline{u})_{x_n}(0)\geq 0$.
 The   \textit{mean pseudoconcavity} of boundary gives   $$t_0\leq0$$
 which automatically implies  \eqref{1-t_0}.

 \subsubsection*{\bf Case 2: Without mean pseudoconcave restriction to  $\partial M$.}
 \begin{lemma}
 	\label{key-lemma-B1}
 	The inequality \eqref{1-t_0} holds for a uniform positive constant $C$ depending on $(\inf_M\phi)^{-1}$, $|u|_{C^0(\bar M)}$,  $|\nabla u|_{C^0(\partial M)}$, $|\underline{u}|_{C^2(\bar M)}$, 
 	%$\sup_{\bar M} (\mathrm{disc}(\lambda(\mathfrak{\underline{\tilde{g}}}),\partial\mathcal{P}_{n-1}))^{-1}$, 
 	$\partial M$ up to third derivatives and other known data.
 	
 \end{lemma}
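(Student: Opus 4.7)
The quantity to control is $(1-t_0)^{-1}$; using \eqref{lemma-B5} and \eqref{t0-2}, we have $1-t_0 = \sum_{\alpha=1}^{n-1}\tilde{\mathfrak{g}}_{\alpha\bar\alpha}(0)/\sum_{\alpha=1}^{n-1}\underline{\tilde{\mathfrak{g}}}_{\alpha\bar\alpha}(0)$. Since $\underline{u}$ is strictly $(n-1)$-PSH on the compact $\bar M$, the denominator is bounded uniformly both above and below away from $0$, so the estimate reduces to a positive lower bound for the tangential sum $\sum_{\alpha}\tilde{\mathfrak{g}}_{\alpha\bar\alpha}(0) = \sum_{\alpha}\underline{\tilde{\mathfrak{g}}}_{\alpha\bar\alpha}(0) + \eta\sum_{\alpha}\sigma_{\alpha\bar\alpha}(0)$. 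Since the mean pseudoconcave case is already handled, we may assume $\sum_{\alpha}\sigma_{\alpha\bar\alpha}(0) < 0$ and therefore the remaining task is a quantitative upper bound $\eta \le (\sum_{\alpha}\underline{\tilde{\mathfrak{g}}}_{\alpha\bar\alpha}(0))/(|\sum_{\alpha}\sigma_{\alpha\bar\alpha}(0)|) - \delta$ for some $\delta > 0$ controlled by the data in the statement. In particular, the bound $\eta \le (\check{u}-\underline{u})_{x_n}(0)$ from \eqref{key-14-yuan3} is not sharp enough by itself, and a barrier construction is needed.

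The plan is a classical Caffarelli--Nirenberg--Spruck type barrier argument, adapted to the $(n-1)$-PSH setting with torsion. Denote the linearized operator $L v := \sum_i F_i v_{i\bar i} + (\text{first order terms from } W \text{ and Chern torsion})$ with $F_i = f_i(\lambda)$ from \eqref{formular26}. In a half-ball $\Omega := M\cap B_\rho(x_0)$ with $\rho$ small but uniform, I would seek a barrier of the form
\begin{equation}
h = A(\underline{u}-u) + \tau\sigma - N\sigma^2 - B|z|^2, \nonumber
\end{equation}
choosing constants $A, \tau, N, B$ in a specific order. The ingredients are: (i) the concavity of $F = \log P_{n-1}$ (which follows since $\mu_i$ is linear in $\lambda$ and $\log$ is concave), giving $L(\underline{u}-u) \le 0$ modulo the subsolution term, and more precisely yielding the bound $\sum F_i(\underline{\mu}_i-n) \ge 0$ via \eqref{formular27}; (ii) the quantitative lower bound $\sum F_i \ge n(n-1)e^{-\psi/n}$ from \eqref{sumf_i1}, where the dependence on $(\inf_M \phi)^{-1}$ enters; (iii) the bound $|\nabla u| \le C$ on $\partial M$ from \eqref{c-0-c1b}. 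After computing $Lh$ and comparing the Hopf-type coefficients on $\partial\Omega$, the maximum principle applied to $u-\underline{u}-h$ on $\Omega$ produces an upper bound for $\eta$ of the form $\eta \le C$ with $C$ strictly less than $\sum_\alpha \underline{\tilde{\mathfrak{g}}}_{\alpha\bar\alpha}(0)/|\sum_\alpha \sigma_{\alpha\bar\alpha}(0)|$, provided $A$ is chosen large enough.

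The main obstacle, the reason the proof is delicate in the presence of $Z \ne 0$, is the handling of the gradient-valued terms $W[u]_{i\bar j} = W_{i\bar j}^k u_k + W_{i\bar j}^{\bar k} u_{\bar k}$ which appear inside $\tilde{\mathfrak{g}}[u]$. When the linearized operator acts on the barrier, these produce first-order terms coupling $\nabla h$ with the unknown $\nabla u$, and while $|\nabla u|$ is controlled on $\partial M$ by \eqref{c-0-c1b}, it is not a priori controlled in $\Omega$. The resolution is to use the $-N\sigma^2$ term as an absorber: its contribution to $Lh$ is $-2N\sum F_i |\sigma_i|^2 + O(N\sigma)$, which is a definite negative quantity thanks to $\sum F_i \gtrsim e^{-\psi/n}$, and via a Cauchy--Schwarz $|2N\sigma\sigma_i u_i| \le N|\sigma_i|^2 + N\sigma^2|u_i|^2$, the gradient coupling is absorbed provided $N$ is large enough relative to the torsion bounds and the boundary gradient bound. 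The $-B|z|^2$ term similarly handles tangential spreading. Once $N$ is fixed, $A$ is chosen to control the sign of the boundary contribution at $\partial M$, and the final outcome is a strict upper bound on $\eta$ that bounds $(1-t_0)^{-1}$; the dependence on $(\inf_M \phi)^{-1}$ is unavoidable and enters through the lower bound on $\sum F_i$, which is precisely why this case is qualitatively different from the mean pseudoconcave one.
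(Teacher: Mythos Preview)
There is a genuine gap in your approach. You correctly identify that the task is to produce a quantitative lower bound on $1-t_0$, equivalently to show that $\eta$ stays a definite amount below the threshold $\Theta:=\sum_\alpha \underline{\tilde{\mathfrak{g}}}_{\alpha\bar\alpha}(0)\,/\,|\sum_\alpha \sigma_{\alpha\bar\alpha}(0)|$. But a linearized-operator barrier of the form $h=A(\underline{u}-u)+\tau\sigma-N\sigma^2-B|z|^2$ cannot do this. Such a barrier yields, at best, an estimate $\eta\le C$ for a constant $C$ determined by the data; it does \emph{not} produce a comparison between $\eta$ and $\Theta$. The claim that ``$C$ is strictly less than $\Theta$ provided $A$ is chosen large enough'' is where the argument breaks: the output of the barrier argument is fixed by the equation and the data, not tunable by $A$. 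Since $t_0\to 1$ exactly means $\eta\to\Theta$, a generic bound $\eta\le C$ (already available from \eqref{key-14-yuan3}) gives no information on $(1-t_0)^{-1}$.

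The paper's mechanism is quite different and does not pass through the linearized operator at all. One builds a function
\[
w=\underline{u}+\frac{\eta}{t_0}\,\sigma+l(z)\sigma+A\,d^2,\qquad d=\sigma+\tau|z|^2,
\]
whose normal derivative at $x_0$ is exactly $\underline{u}_{x_n}(0)+\eta/t_0$. By the very definition \eqref{t0-2} of $t_0$, this forces the tangential trace $\Lambda(\tilde{\mathfrak{g}}[w])$ to vanish at $x_0$; the linear correction $l(z)\sigma$ kills the first-order Taylor remainder, and the identity \eqref{lemma-B4} (rather than a Cauchy--Schwarz absorption) is what disposes of the gradient contributions from $W$ at the origin. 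One obtains $\Lambda(\tilde{\mathfrak{g}}[w])\le 0$ on $\Omega_\delta$, so $\lambda(\tilde{\mathfrak{g}}[w])\notin\mathcal{P}_{n-1}$, i.e.\ $w$ is a genuine supersolution in the viscosity sense. A small perturbation $h=w+\epsilon(|z|^2-x_n/C_2)$, with $\epsilon$ chosen using $(\inf_M\phi)^{-1}$ so that $\lambda(\tilde{\mathfrak{g}}[h])$ stays outside the superlevel set $\overline{\mathcal{P}}_{n-1}^{\inf_M\psi}$, then allows the comparison principle \cite[Lemma~B]{CNS3}: $u\le h$ on $\Omega_\delta$ with equality at $x_0$ gives $\eta\le \eta/t_0-\epsilon/C_2$, hence $(1-t_0)^{-1}\le 1+\eta C_2/\epsilon$. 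The point you are missing is that the barrier must be built with the coefficient $\eta/t_0$ (containing the unknown $t_0$) in front of $\sigma$, so that the normal-derivative comparison outputs a relation between $\eta$ and $t_0$ rather than a bare bound on $\eta$.
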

 
 We assume throughout $\eta>0$; otherwise $t_0=0$ and we have done.
 From $\lambda(\underline{\tilde{\mathfrak{g}}})\in \mathcal{P}_{n-1}$ we know $\sum_{\alpha=1}^{n-1}\underline{\tilde{\mathfrak{g}}}_{\alpha\bar\alpha}(0)>0$.
 %  Clearly $t_0<1$ since $\sum_{\alpha=1}^{n-1}{\tilde{\mathfrak{g}}}_{\alpha\bar\alpha}(0)>0$ ($\lambda({\tilde{\mathfrak{g}}})\in \mathcal{P}_{n-1}$).
 In what follows we assume $$t_0>\frac{1}{2}.$$ Since $\eta$ has a uniform upper bound, thus at origin
 \begin{equation}
 	\label{yuan3-a2}
 	-\sum_{\alpha=1}^{n-1}\sigma_{\alpha\bar\alpha}(0)\geq {t_0\sum_{\alpha=1}^{n-1}\underline{\tilde{\mathfrak{g}}}_{\alpha\bar\alpha}(0)}/{\eta}\geq  {\sum_{\alpha=1}^{n-1}\underline{\tilde{\mathfrak{g}}}_{\alpha\bar\alpha}(0)}/{2\eta}\geq a_2
 \end{equation}
 where $a_2= {\inf_{z\in\partial M}\sum_{\alpha=1}^{n-1}\underline{\tilde{\mathfrak{g}}}_{\alpha\bar\alpha}(z)}/{2\sup_{\partial M}|\nabla (u-\underline{u})|}.$
 %We know on $\Omega_\delta$, \[\sigma=x_n+O(|z|^2).\]

Let $\Omega_\delta$ be as in \eqref{def-omega1}.
 Inspired by an idea of Caffarelli-Nirenberg-Spruck \cite{CNS3} (see also \cite{LiSY2004}),
 % which was further formulated by Li \cite{LiSY2004} in complex variables, 
% on $\Omega_\delta$ which is defined as in \eqref{def-omega1},
  we set on $\Omega_\delta$
 \begin{equation}
 	\begin{aligned}
 		d(z)=\sigma(z)+\tau |z|^2 \nonumber
 	\end{aligned}
 \end{equation}
 where $\tau$ is a positive constant %with $\tau<a_1/4$ and
 to be determined. Let 
 \begin{equation}
 	\label{w-buchong1}
 	\begin{aligned}
 		w(z)=\underline{u}(z)+({\eta}/{t_0})\sigma(z)+l(z)\sigma(z)+Ad(z)^2. %\nonumber
 	\end{aligned}
 \end{equation}
 where $A$ is a positive constant to be determined, and $l(z)=\sum_{i=1}^n(l_iz_i+\bar l_{ i} \bar z_{i})$ 
 %is a real-valued linear function, 
 where $l_i\in \mathbb{C}$, $\bar l_i=l_{\bar i}$ to be chosen as in \eqref{chosen-2}.
 %This type of construction of barriers goes back to \cite{CNS3}.

 Let $T_1(z),\cdots, T_{n-1}(z)$ be an orthonormal basis for holomorphic tangent space  of level hypersurface
 $\{w: d(w)=d(z)\}$ at $z$, so that at the origin 
 $T_\alpha(0)= \frac{\partial }{\partial z_\alpha}$  for each $1\leq\alpha\leq n-1$. Furthermore, let $T_n=\frac{\partial d}{|\partial d|}$.
 %$T_\alpha=\sum_{k=1}^nT_\alpha^k\frac{\partial }{\partial z_k}$ are at least $C^1$ and $T_\alpha^k(0)=\delta_{\alpha k}$.  

 Such a basis exists: %As denoted in Proposition \ref{mix-general}, $\nu$ is the unit inner normal vector along the boundary. 
 We see at the origin  $\partial d(0)=\partial \sigma(0)$.  
 % $\nabla d(0)=\nabla \sigma(0).$
 Thus for $1\leq \alpha\leq n-1$, we can choose $T_\alpha$ such that at the  origin
 $T_\alpha(0)= \frac{\partial }{\partial z_\alpha}$.
 
 For a real $(1,1)$-form $\Theta=\sqrt{-1}\Theta_{i\bar j}dz_i\wedge d\bar z_j$, we denote by 
 $\lambda(\Theta)$  the eigenvalues of $\Theta$ (with respect to $\omega$) with order $\lambda_1(\Theta)\leq \cdots\leq \lambda_n(\Theta)$. Since $\lambda_n(\Theta)\geq \Theta(T_n,J\bar T_n)$, one has
 \begin{equation}
 	\label{lemma-yuan3-buchong1}
 	\begin{aligned}
 		\sum_{\alpha=1}^{n-1}\lambda_\alpha(\Theta)\leq \sum_{\alpha=1}^{n-1} \Theta(T_\alpha,J\bar T_\alpha).
 	\end{aligned}
 \end{equation}
 %  \begin{lemma}
 %\label{lemma-yuan3-buchong1}
 %Let $T_1(z),\cdots, T_{n-1}(z)$ be as above, and  let  $T_n=\frac{\partial d}{|\partial d|}$.
 %(i.e. $T_i(z)\in T_{z}^{1,0}\bar M$ such that $\omega(T_i, J\bar T_{j})=\delta_{ij}$).
 %For a real $(1,1)$-form $\Theta=\sqrt{-1}\Theta_{i\bar j}dz_i\wedge d\bar z_j$, we denote by $\lambda(\Theta)=(\lambda_1(\Theta),\cdots,\lambda_n(\Theta))$  the eigenvalues of $\Theta$ (with respect to $\omega$) with order $\lambda_1(\Theta)\leq \cdots\leq \lambda_n(\Theta)$. Then for any %$\mu_1,\cdots,\mu_n$ with $\mu_1\geq\cdots\geq\mu_n$,\[\sum_{i=1}^n \mu_i \lambda_{i}(\Theta)\leq \sum_{i=1}^n\mu_i\Theta(T_i,J\bar T_i).\]
 %\end{lemma}
 % Let's define a local operator $\Lambda$:
 We then define $\Lambda(\Theta):=\sum_{\alpha=1}^{n-1} T_{\alpha}^i \bar T_{\alpha}^j \Theta_{i\bar j}$ for
 % a real $(1,1)$-form 
 $\Theta=\sqrt{-1}\Theta_{i\bar j}dz_i\wedge d\bar z_j.$
 % \begin{equation}  \begin{aligned}
 %\Lambda(\Theta)=\sum_{\alpha=1}^{n-1} T_{\alpha}^i \bar T_{\alpha}^j \Theta_{i\bar j}.  \nonumber
 % \end{aligned}\end{equation}
 
 \begin{lemma}
 	\label{key-lemma-B2}
 	There are %parameters
 	$\tau$, $\delta$, $A$ and %(local) linear function 
 	$l(z)$ depending on %$(\inf_M\phi)^{-1}$, 
 	$|u|_{C^0(M)}$, 
 	$|\nabla u|_{C^0(\partial M)}$,  
 	$|\underline{u}|_{C^2(M)}$, 
 	%$\sup_{\bar M} (\mathrm{disc}(\lambda(\mathfrak{\underline{\tilde{g}}}),\partial\mathcal{P}_{n-1}))^{-1}$,  
 	$\partial M$ up to third derivatives and other known data so that
 	\begin{equation}
 		\begin{aligned}
 			\,&   \Lambda(\tilde{\mathfrak{g}}[w]) \leq0 \mbox{ in } \Omega_\delta, \,& u\leq w \mbox{ on } \partial \Omega_\delta. \nonumber
 		\end{aligned}
 	\end{equation}
 	%  \begin{equation}   \begin{aligned}
 	%  u(z)-w(z)=-A\tau^2|z|^4;  \nonumber
 	%  \end{aligned}  \end{equation}
 	
 \end{lemma}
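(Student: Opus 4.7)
First I would verify that $\Lambda(\tilde{\mathfrak{g}}[w])(0) = 0$. Write $v = w - \underline{u} = (\eta/t_0)\sigma + l(z)\sigma + Ad^2$; since $\sigma(0) = d(0) = l(0) = 0$, and in the coordinates \eqref{goodcoordinate1} one has $\sigma_\alpha(0) = 0$ for $\alpha \le n-1$ while $\sigma_n(0) = 1/2$, only $v_n(0) = v_{\bar n}(0) = \eta/(2t_0)$ survive among the first derivatives of $v$ at the origin. Thus
\[
\sum_{\alpha=1}^{n-1}\tilde{\mathfrak{g}}[w]_{\alpha\bar\alpha}(0) = \sum_{\alpha=1}^{n-1}\underline{\tilde{\mathfrak{g}}}_{\alpha\bar\alpha}(0) + \frac{\eta}{t_0}\sum_{\alpha=1}^{n-1}\sigma_{\alpha\bar\alpha}(0) + \frac{\eta}{2t_0}\sum_{\alpha=1}^{n-1}\bigl(W^n_{\alpha\bar\alpha} + W^{\bar n}_{\alpha\bar\alpha}\bigr)(0).
\]
The last sum vanishes by \eqref{lemma-B4}, and the first two cancel by the definition \eqref{t0-2} of $t_0$. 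Since $T_\alpha(0) = \partial/\partial z_\alpha$, this gives $\Lambda(\tilde{\mathfrak{g}}[w])(0) = 0$.

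Next I would Taylor-expand $\Lambda(\tilde{\mathfrak{g}}[\underline{u} + (\eta/t_0)\sigma])(z)$ near the origin. Its linear-in-$(z,\bar z)$ part is a fixed $\mathbb{R}$-linear form depending on third derivatives of $\sigma$, second derivatives of $\underline{u}$, derivatives of the Chern-torsion coefficients $W^k_{i\bar j}$, $W^{\bar k}_{i\bar j}$, and the smooth variation of the frame $T_\alpha(z)$ about $\partial/\partial z_\alpha$ at the origin. Adding $l(z)\sigma$ modifies this linear part by an $\mathbb{R}$-linear map in the $n$ complex constants $l_i$, giving $2n$ real degrees of freedom to kill the $2n$ real linear coefficients. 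The matching matrix is non-degenerate, its leading block driven by $\sigma_n(0) = 1/2$, so one solves for the $l_i$ explicitly with bounds depending only on the data named in the statement. After this adjustment, $\Lambda\bigl(\tilde{\mathfrak{g}}[\underline{u} + (\eta/t_0)\sigma + l\sigma]\bigr)(z) \le C|z|^2$ on $\Omega_\delta$.

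Finally, choose $\tau$ sufficiently large, then $A$ sufficiently large, then $\delta$ sufficiently small, so that the $Ad^2$-contribution to $\Lambda(\tilde{\mathfrak{g}}[w])$ — which by direct computation using $T_\alpha^i d_i \equiv 0$ on level sets of $d$ equals $2Ad\sum_{\alpha=1}^{n-1} T_\alpha^i \bar T_\alpha^j d_{i\bar j} + A\Lambda(W[d^2])$ with $d_{i\bar j} = \sigma_{i\bar j} + \tau\delta_{ij}$ — dominates the $O(|z|^2)$ residual in the correct direction throughout $\Omega_\delta$; this yields $\Lambda(\tilde{\mathfrak{g}}[w]) \le 0$. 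For the boundary condition $u \le w$ on $\partial\Omega_\delta$: on $\partial M \cap \Omega_\delta$ we have $\sigma = 0$ and $u = \underline{u}$, hence $w - u = A\tau^2|z|^4 \ge 0$; on $M \cap \{|z| = \delta\}$, $d \ge \tau\delta^2$ gives $Ad^2 \ge A\tau^2\delta^4$, which (for $A$ large) absorbs the bounded pieces $|u - \underline{u}| + |(\eta/t_0 + l)\sigma|$.

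The hard part will be the matching step: the gradient corrections $W[l\sigma]$ depend linearly on both $l_i$ and $z$, so they feed back into the linear system determining $l_i$, and one must verify that this system remains non-degenerate with constants quantitatively controlled by $|\underline{u}|_{C^2}$, $|\varphi|_{C^3}$, and $\partial M$ up to third derivatives (and, in the absence of mean pseudoconcavity, also by $(1-t_0)^{-1}$). A secondary subtlety is keeping the dependence on $(\inf_M \phi)^{-1}$ explicit in Case 2 under the standing assumption $t_0 > 1/2$, so that the final estimate for $(1-t_0)^{-1}$ in Lemma \ref{key-lemma-B1} can be extracted without circularity.
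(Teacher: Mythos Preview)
Your overall architecture is the paper's: verify $\Lambda(\tilde{\mathfrak{g}}[w])(0)=0$, kill the linear part with $l(z)$, then let the $Ad^2$ term absorb the residual. But two related steps have the mechanism wrong, and both stem from missing the role of \eqref{yuan3-a2}.

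First, you write ``choose $\tau$ sufficiently large''. With $d_{i\bar j}=\sigma_{i\bar j}+\tau\delta_{ij}$ and $T_\alpha(0)=\partial_\alpha$ one has
\[
\sum_{\alpha=1}^{n-1}T_\alpha^i\bar T_\alpha^j\, d_{i\bar j}(0)=\sum_{\alpha=1}^{n-1}\sigma_{\alpha\bar\alpha}(0)+(n-1)\tau,
\]
so large $\tau$ makes this positive; since $d\geq 0$ on $\Omega_\delta$, the term $2Ad\sum_\alpha T_\alpha^i\bar T_\alpha^j d_{i\bar j}$ then pushes $\Lambda(\tilde{\mathfrak{g}}[w])$ \emph{upward}, the wrong direction. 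The paper instead takes $\tau>0$ \emph{small}. The negativity comes from the standing hypothesis $t_0>\tfrac12$, which via \eqref{yuan3-a2} forces $-\sum_\alpha\sigma_{\alpha\bar\alpha}(0)\geq a_2>0$; for $\tau,\delta$ small this yields $\sum_\alpha T_\alpha^i\bar T_\alpha^j d_{i\bar j}\leq -a_2/4$ on $\Omega_\delta$, hence $2Ad\sum_\alpha T_\alpha^i\bar T_\alpha^j d_{i\bar j}\leq -\tfrac{Aa_2}{2}d$, and it is this term that dominates the leftover $C_1\sigma+O(|z|^2)$ for $A\gg1$.

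Second, the non-degeneracy of the matching system is not ``driven by $\sigma_n(0)=\tfrac12$''. Since $\sigma_\alpha(0)=0$, the cross-terms $\sum_\alpha(l_\alpha\sigma_{\bar\alpha}+\sigma_\alpha l_{\bar\alpha})$ vanish at the origin; the linear-in-$z$ contribution of $l(z)\sigma$ to $\Lambda$ is $l(z)\sum_\alpha\sigma_{\alpha\bar\alpha}(0)-\tau\sum_\alpha(z_\alpha l_\alpha+\bar z_\alpha\bar l_\alpha)$, so the coefficient of $l_\alpha$ in the $z_\alpha$-equation is $\sum_\beta\sigma_{\beta\bar\beta}(0)-\tau$ and that of $l_n$ in the $z_n$-equation is $\sum_\beta\sigma_{\beta\bar\beta}(0)$ (this is exactly \eqref{chosen-2}). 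The uniform solvability with $|l_i|$ bounded therefore rests once more on $|\sum_\beta\sigma_{\beta\bar\beta}(0)|\geq a_2>0$ from \eqref{yuan3-a2}, not on $\sigma_n(0)$. Regarding your ``hard part'' concern about the $W[l\sigma]$ feedback: the paper does not feed those gradient terms into the linear system at all; it simply bounds $\sum_\alpha T_\alpha^i\bar T_\alpha^j(W_{i\bar j}^pl_p+W_{i\bar j}^{\bar q}l_{\bar q})\sigma\leq C_1\sigma$ and absorbs it with $-\tfrac{Aa_2}{2}d$. Once you invoke \eqref{yuan3-a2} in both places and take $\tau$ small, your outline becomes the paper's proof.
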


 \begin{proof}
In the proof, Lemma \ref{key-lemma34} plays key roles in treating the gradient terms from $Z$ in equation.
 	Direct computations give
 	\begin{equation} \begin{aligned}
 			\,& w_i = \underline{u}_i + \frac{\eta}{t_0} \sigma_i +l_i\sigma +l(z)\sigma_i +2Add_i, \\ \nonumber
 			%  \end{aligned} \end{equation}
 			%\begin{equation}    \begin{aligned}
 			w_{i\bar j}=
 			\underline{u}_{i\bar j}\,&+\frac{\eta}{t_0} \sigma_{i\bar j}+l(z)\sigma_{i\bar j}
 			+(l_i\sigma_{\bar j}+\sigma_i l_{\bar j})
 			+2Add_{i\bar j} +2A d_i d_{\bar j}.  \nonumber
 		\end{aligned}
 	\end{equation}
 	Then
 	\begin{equation}
 		\begin{aligned}
 			\Lambda(\tilde{\mathfrak{g}}[w])=\,& 
 			\sum_{\alpha=1}^{n-1}T_\alpha^{i}\bar T_\alpha^j ( (\check{\chi}_{i\bar j}+ \underline{w}_{i\bar j}+W_{i\bar j}^pw_p + W_{i\bar j}^{\bar q}w_{\bar q}) \\
 			=\,&   \sum_{\alpha=1}^{n-1}T_\alpha^{i}\bar T_\alpha^j  
 			(\check{\chi}_{i\bar j}+ \underline{u}_{i\bar j}+W_{i\bar j}^p \underline{u}_p+W_{i\bar j}^{\bar q}\underline{u}_{\bar q}+\frac{\eta}{t_0} \sigma_{i\bar j})
 			\\\,&
 			+l(z) \sum_{\alpha=1}^{n-1}T_\alpha^{i}\bar T_\alpha^j \sigma_{i\bar j}
 			+\sum_{\alpha=1}^{n-1}T_\alpha^{i}\bar T_\alpha^j (\sigma_i l_{\bar j}+l_i\sigma_{\bar j})
 			\\ \,&
 			+2Ad(z)\sum_{\alpha=1}^{n-1}T_\alpha^{i}\bar T_\alpha^j d_{i\bar j}
 			+\frac{\eta}{t_0} \sum_{\alpha=1}^{n-1}T_\alpha^{i}\bar T_\alpha^j (W_{i\bar j}^p \sigma_p+ W_{i\bar j}^{\bar q} \sigma_{\bar q})\\\,&
 			+l(z)\sum_{\alpha=1}^{n-1}T_\alpha^{i}\bar T_\alpha^j (W_{i\bar j}^p\sigma_p+W_{i\bar j}^{\bar q}\sigma_{\bar q})
 			+\sum_{\alpha=1}^{n-1}T_\alpha^{i}\bar T_\alpha^j (W_{i\bar j}^pl_p+W_{i\bar j}^{\bar q}l_{\bar q})\sigma
 			\\\,&
 			+2Ad(z)\sum_{\alpha=1}^{n-1}T_\alpha^{i}\bar T_\alpha^j(W_{i\bar j}^pd_p+W_{i\bar j}^{\bar q}d_{\bar q}).  \nonumber
 		\end{aligned}
 	\end{equation}

 	\begin{itemize}
 		\item   At origin $z=0$, $T_{\alpha}^i=\delta_{\alpha i}$, 
 		\begin{equation}
 			\begin{aligned}
 				\,& \sum_{\alpha=1}^{n-1}T_\alpha^{i}\bar T_\alpha^j  
 				(\check{\chi}_{i\bar j}+ \underline{u}_{i\bar j}+W_{i\bar j}^p \underline{u}_p+W_{i\bar j}^{\bar q}\underline{u}_{\bar q}+\frac{\eta}{t_0} \sigma_{i\bar j})(0)
 				\\ =\,&\sum_{\alpha=1}^{n-1} \underline{\tilde{\mathfrak{g}}}_{\alpha\bar\alpha}(0)+\frac{\eta}{t_0}\sum_{\alpha=1}^{n-1}\sigma_{\alpha\bar \alpha}(0)=0.  \nonumber
 			\end{aligned}
 		\end{equation}
 		Thus there are complex constants $k_i$ such that on $\Omega_\sigma$,
 		\begin{equation}
 			\begin{aligned}
 			  \sum_{\alpha=1}^{n-1}T_\alpha^{i}\bar T_\alpha^j  
 				(\check{\chi}_{i\bar j}+ \underline{u}_{i\bar j}+W_{i\bar j}^p \underline{u}_p+W_{i\bar j}^{\bar q}\underline{u}_{\bar q}+\frac{\eta}{t_0} \sigma_{i\bar j}) 
 				= \sum_{i=1}^n(k_iz_i+\bar k_i\bar z_i)+O(|z|^2).  \nonumber
 			\end{aligned}
 		\end{equation}
 		\item  Next, we see
 		\begin{equation}
 			\begin{aligned}
 				2A d(z)  \sum_{\alpha=1}^{n-1}T_\alpha^i\bar T_\alpha^{j} d_{i\bar j} \leq -\frac{Aa_2d(z)}{2},  \nonumber
 			\end{aligned}
 		\end{equation}
 		provided $0<\delta, \tau\ll1$.  Here we use 
 		\begin{equation}
 			\begin{aligned}
 				\sum_{\alpha=1}^{n-1}T_\alpha^i\bar T_\alpha^{j} d_{i\bar j}
 				=\,& (\sum_{\alpha=1}^{n-1}T_\alpha^i\bar T_\alpha^{j}-\sum_{\alpha=1}^{n-1}T_\alpha^i\bar T_\alpha^{j} (0) )d_{i\bar j}
 				+\sum_{\alpha=1}^{n-1}\sigma_{\alpha\bar\alpha}(z)+(n-1)\tau \\
 				%\leq \,& %C\sigma(z)
 				\leq \,& (n-1)\tau-a_2+O(|z|) \leq -\frac{a_2}{4} \nonumber
 			\end{aligned}
 		\end{equation}
 		by  %\eqref{yuan3-buchong2} and
 		$\sum_{\alpha=1}^{n-1}\sigma_{\alpha\bar\alpha}(z)=\sum_{\alpha=1}^{n-1}\sigma_{\alpha\bar\alpha}(0)+O(|z|)$,  \eqref{yuan3-a2} and
 		\begin{equation}
 			\label{yuan3-buchong2}
 			\begin{aligned}
 				\sum_{\alpha=1}^{n-1}   T_{\alpha}^i \bar T_{\alpha}^j(z)
 				=\sum_{\alpha=1}^{n-1}   T_{\alpha}^i \bar T_{\alpha}^j(0)+O(|z|).
 				%=\sum_{\alpha=1}^{n-1} \delta_{\alpha i} \delta_{\alpha j}+O(|z|).
 			\end{aligned}
 		\end{equation}
 		
 		\item   
 		\begin{equation}
 			\begin{aligned}
 				\,& l(z) \sum_{\alpha=1}^{n-1}T_\alpha^{i}\bar T_\alpha^j \sigma_{i\bar j}
 				+\sum_{\alpha=1}^{n-1}T_\alpha^{i}\bar T_\alpha^j (\sigma_i l_{\bar j}+l_i\sigma_{\bar j}) \\
 				= \,&
 				l(z)\sum_{\alpha=1}^{n-1}\sigma_{\alpha\bar\alpha}(0)
 				- \tau\sum_{\alpha=1}^{n-1}  (z_\alpha l_\alpha+\bar z_{\alpha} \bar l_{\alpha})
 				+O(|z|^2)  \nonumber
 			\end{aligned}
 		\end{equation}
 		since by \eqref{yuan3-buchong2} and 
 		$ \sum_{i=1}^n T_\alpha^i \sigma_i=-\tau\sum_{i=1}^n T_\alpha^i
 		\bar z_i $
 		one has
 		\begin{equation}
 			\begin{aligned}
 				l(z) \sum_{\alpha=1}^{n-1}  T^i_\alpha \bar T^j_\alpha \sigma_{i\bar j} 
 				=
 				l(z) \sum_{\alpha=1}^{n-1}\mu_\alpha\sigma_{\alpha\bar\alpha}(0)
 				+O(|z|^2)  \nonumber
 			\end{aligned}
 		\end{equation}
 		\[ \sum_{\alpha=1}^{n-1}T_\alpha^{i}\bar T_\alpha^j (\sigma_i l_{\bar j}+l_i\sigma_{\bar j})=-\tau\sum_{\alpha=1}^{n-1}  (z_\alpha l_\alpha+\bar z_{\alpha}  \bar l_{\alpha})
 		+O(|z|^2).\]

 		\item %For $\frac{\eta}{t_0} \sum_{\alpha=1}^{n-1}T_\alpha^{i}\bar T_\alpha^j (W_{i\bar j}^p \sigma_p+ W_{i\bar j}^{\bar q} \sigma_{\bar q})$,  
 		At the origin, 
 		\begin{equation}
 			\begin{aligned}
 				\,&  \sum_{\alpha=1}^{n-1}T_\alpha^{i}\bar T_\alpha^j (W_{i\bar j}^p \sigma_p+ W_{i\bar j}^{\bar q} \sigma_{\bar q}) (0)\\
 				=\,&  \sum_{\alpha,\beta=1}^{n-1}  (W_{\alpha\bar \alpha}^\beta \sigma_\beta+ W_{\alpha\bar \alpha}^{\bar \beta} \sigma_{\bar \beta}) (0)  + \sum_{\alpha=1}^{n-1} (W_{\alpha\bar \alpha}^n \sigma_n+ W_{\alpha\bar \alpha}^{\bar n} \sigma_{\bar n})(0)=0, \nonumber
 			\end{aligned}
 		\end{equation}
 		since $\sigma_\beta(0)=0$, and by  \eqref{lemma-B4}
 		\[\sum_{\alpha=1}^{n-1}W_{\alpha\bar\alpha}^n(0)=0, \mbox{  }\sum_{\alpha=1}^{n-1}W_{\alpha\bar\alpha}^{\bar n}(0)=0.\]
 		Thus on $\Omega_\sigma$, 
 		\begin{equation}
 			\begin{aligned}
 				l(z)\sum_{\alpha=1}^{n-1}T_\alpha^{i}\bar T_\alpha^j (W_{i\bar j}^p\sigma_p+W_{i\bar j}^{\bar q}\sigma_{\bar q})(z)
 				=O(|z|^2), \nonumber
 			\end{aligned}
 		\end{equation}
 		and there are complex constants $m_i$ such that 
 		\begin{equation}
 			\begin{aligned}
 				\frac{\eta}{t_0} \sum_{\alpha=1}^{n-1}T_\alpha^{i}\bar T_\alpha^j
 				(W_{i\bar j}^p \sigma_p+ W_{i\bar j}^{\bar q} \sigma_{\bar q})(z)
 				=\sum_{i=1}^n(m_iz_i+\bar m_i \bar z_i)+O(|z|^2).  \nonumber
 			\end{aligned}
 		\end{equation}
 		\item Similarly,
 		$\sum_{\alpha=1}^{n-1}T_\alpha^{i}\bar T_\alpha^j(W_{i\bar j}^pd_p+W_{i\bar j}^{\bar q}d_{\bar q})(0)=0$, thus on $\Omega_\delta$,
 		$$\sum_{\alpha=1}^{n-1}T_\alpha^{i}\bar T_\alpha^j(W_{i\bar j}^pd_p+W_{i\bar j}^{\bar q}d_{\bar q})(z)=O(|z|)$$ 
 		so
 		\begin{equation}
 			\begin{aligned}
 				2Ad(z)\sum_{\alpha=1}^{n-1}T_\alpha^{i}\bar T_\alpha^j(W_{i\bar j}^pd_p+W_{i\bar j}^{\bar q}d_{\bar q})(z)=Ad(z)O(|z|). \nonumber
 			\end{aligned}
 		\end{equation}

 		\item Finally
 		\begin{equation}
 			\begin{aligned}
 				\sum_{\alpha=1}^{n-1}T_\alpha^{i}\bar T_\alpha^j (W_{i\bar j}^pl_p+W_{i\bar j}^{\bar q}l_{\bar q})\sigma(z) \leq C_1\sigma(z). \nonumber
 			\end{aligned}
 		\end{equation}
 		
 	\end{itemize}
 	Therefore, we get
 	\begin{equation}
 		\begin{aligned}
 			\Lambda(\tilde{\mathfrak{g}}[w])\leq \,& 
 			2\mathfrak{Re}\sum_{\alpha=1}^{n-1}\left[z_\alpha\left(k_\alpha+m_\alpha+ l_\alpha\left(\sum_{\beta=1}^{n-1}\sigma_{\beta\bar\beta}(0)-\tau\right)\right)\right] \\
 			\,&
 			+2\mathfrak{Re} \left[z_n\left(k_n+m_n + l_n\sum_{\beta=1}^{n-1}\sigma_{\beta\bar\beta}(0))\right)\right] \\
 			\,&
 			-\frac{a_2 A d(z)}{2} +Ad(z)O(|z|)+C_1\sigma(z) + O(|z|^2).  \nonumber
 		\end{aligned}
 	\end{equation}
 	We complete the proof if $0<\tau, \delta\ll1$, $A\gg1$, and we set 
 	\begin{equation}
 		\label{chosen-2}
 		\begin{aligned}
 			l_\alpha=-\frac{k_\alpha+m_\alpha}{\sum_{\beta=1}^{n-1}\sigma_{\beta\bar\beta}(0)-\tau} \mbox{ for } 1\leq \alpha\leq n-1, \mbox{ }   l_n=-\frac{k_n+m_n}{\sum_{\beta=1}^{n-1}\sigma_{\beta\bar\beta}(0)}.
 		\end{aligned}
 	\end{equation}
 	We can see each $|l_i|$ is uniformly bounded, since $\sum_{\beta=1}^{n-1}\sigma_{\beta\bar\beta}(0)\leq -a_2<0$.

 	Furthermore, on $\partial M\cap\bar\Omega_\delta$, $u(z)-w(z)=-A\tau^2|z|^4$.
 	%\begin{equation}   \begin{aligned}
 	%   u(z)-w(z)=-A\tau^2|z|^4,
 	%   \end{aligned}    \end{equation}
 	On $M\cap\partial \Omega_{\delta}$,
 	\begin{equation}
 		\begin{aligned}
 			u(z)-w(z) %=\,& u(z)-\underline{u}(z)-({\eta}/{t_0})\sigma(z)-l(z)\sigma(z)-Ad(z)^2 \\
 			\leq \,& |u-\underline{u}|_{C^0(\Omega_\delta)} 
 			-(2A\tau \delta^2+\frac{\eta}{t_0}-2n \sup_{i}|l_i| \delta)\sigma(z)-A\tau^2\delta^4 \\
 			\leq \,& -\frac{A\tau^2 \delta^4}{2} \nonumber
 		\end{aligned}
 	\end{equation}
 	provided $A\gg1$. %depending on $|u|_{C^0(M)}$.
 \end{proof}
 
 \subsection{Completion of proof of Lemma \ref{key-lemma-B1}}
 
 Let $w$ be the function as in Lemma \ref{key-lemma-B2}.
 From the construction above, we know that there is a uniform positive constant $C_0$ such that
 \[\sup_M |\mathfrak{\tilde{g}}[w]|\leq C_0.\]
 Let $\lambda(\tilde{\mathfrak{g}}[w])=(\lambda_1[w],\cdots,\lambda_n[w])$, let  $\mu_i[w]=\sum_{j\neq i}\lambda_j[w]$ and we assume $\lambda_1[w]\leq \cdots\leq \lambda_n[w]$. 
 Denote by
 \begin{equation}
 	\begin{aligned}
 		\overline{\mathcal{P}}_{n-1}^{\inf_M\psi}=\left\{\lambda\in\mathcal{P}_{n-1}: \sum_{i=1}^n \log\mu_i\geq \inf_M \psi \right\}. \nonumber
 	\end{aligned}
 \end{equation}
 
  Near the origin $x_0=\{z=0\}$, there are complex valued constants $b_{ij}$, $a_{i\bar j}$ with $\bar a_{i\bar j}=a_{j\bar i}$ such that 
 \begin{equation}
 	\label{asym-sigma1}
 	\begin{aligned}
 		\sigma(z)=x_n+\sum_{ i,j=1}^{n} a_{i\bar j} z_i\bar z_j+\mathfrak{Re}\sum_{i,j=1}^nb_{ij}z_iz_j+O(|z|^3).
 	\end{aligned}
 \end{equation}
 %By \eqref{asym-sigma1} one
One can choose a  positive constant $C'$ such that $ x_n\leq C'|z|^2$ on 
 $\partial M\cap \bar{\Omega}_\delta$, 
 there is a positive constant $C_2$ depending only on $M$ and $\delta$ so that  
 $$x_n\leq C_2 |z|^2 \mbox{ } \mbox{ on }\partial\Omega_\sigma.$$
 Let $C_2$ be as above we set ${h}(z)=w(z)+\epsilon (|z|^2-\frac{x_n}{C_2})$.    Thus
 \begin{equation}
 	\begin{aligned}
 		u\leq {h} \mbox{ }  \mbox{ on } \partial \Omega_\delta. \nonumber
 	\end{aligned}
 \end{equation}
 Lemma \ref{key-lemma-B2} and \eqref{lemma-yuan3-buchong1} give
 \begin{equation}
 	\begin{aligned}
 		\sum_{\alpha=1}^{n-1}\lambda_\alpha[w]\leq 0 
 		\mbox{ } \mbox{ in } \Omega_\delta.  \nonumber
 	\end{aligned}
 \end{equation}
 That is, in $\Omega_\delta$,
 \begin{equation}
 	\begin{aligned}
 		\lambda[w]\notin \mathcal{P}_{n-1}, \mbox{ i.e. } \mu[w]\notin\Gamma_n.  \nonumber
 	\end{aligned}
 \end{equation}
 In other words, $\lambda[w]\in X$, where
 \[X=\{\lambda\in\mathbb{R}^n: \lambda\notin \mathcal{P}_{n-1}\}\cap \{\lambda\in\mathbb{R}^n:   |\lambda|\leq C_0\}.\]
 Notice $\mathcal{P}_{n-1}$ is open so $X$ is a compact subset; moreover $X\cap \overline{\mathcal{P}}_{n-1}^{\inf_M\psi}=\emptyset$. 
 So we can deduce that the distance between $\overline{\mathcal{P}}_{n-1}^{\inf_M\psi}$ and $X$ 
 is greater than some positive constant depending on  
 $\inf_M\phi$ and other known data.
 Therefore, there exists an $0<\epsilon\ll1$ depending on $\inf_M\phi$, $\lambda[w]$, torsion tensor and other known data such that
 \begin{equation}
 	\begin{aligned}
 		\lambda[{h}] \notin \overline{\mathcal{P}}_{n-1}^{\inf_M\psi}. \nonumber
 	\end{aligned}
 \end{equation}
 %Moreover, on $\partial M\cap\bar\Omega_\sigma$
 %\begin{equation}   \begin{aligned}
 %  u(z)-w(z)=-A\tau^2|z|^4;  \nonumber
 %  \end{aligned}  \end{equation}
 %  and on $M\cap\partial B_{\delta}(0)$,  $ u(z)-w(z)\leq  -\frac{A\tau^2 \delta^4}{2}$ if $A\gg1$.
 %  \begin{equation}   \begin{aligned}
 %  u(z)-w(z)   \leq  -\frac{A\tau^2 \delta^4}{2}.  \nonumber
 % \end{aligned} \end{equation}
 By \cite[Lemma B]{CNS3}, we have $$u\leq {h} \mbox{ } \mbox{ in }\Omega_\delta.$$
 Notice 
 $u(0)=\varphi(0)$ and ${h}(0)=\varphi(0)$, we have  $(u-{h})_{x_n}(0)\leq 0$ then   
 \begin{equation}
 	\begin{aligned}
 		(1-t_0)^{-1}\leq 1+\frac{\eta C_2}{\epsilon}.  \nonumber
 	\end{aligned}
 \end{equation}

 \section{Quantitative boundary estimate for tangential-normal derivatives}
 \label{sec4}

 The remaining goal is to derive quantitative boundary estimate for tangential-normal derivatives.

 \begin{proposition}
 	\label{mix-general}
 	
 	Let $(M,J,\omega)$ be a compact Hermitian manifold with $C^3$-smooth boundary.
 	In addition we assume   \eqref{subsolution-MA-n-1} holds.
 	Then 
 	for any strictly $(n-1)$-PSH solution $u\in C^3(M)\cap C^2(\bar M)$ to the Dirichlet problem \eqref{MA-n-1} and \eqref{bdy-value-2},
 	there is a uniform positive constant $C$ depending on $|\varphi|_{C^{3}(\bar M)}$,   
 	$|\phi^{{1}/{n}}|_{C^1(\bar M)}$, $|\underline{u}|_{C^{2}(\bar M)}$, $|\nabla u|_{C^0(\partial M)}$, $\partial M$ 
 	up to third derivatives
 	and other known data (but neither on $(\inf_M\phi)^{-1}$ nor  on $\sup_{M}|\nabla u|$)
 	such that
 	%\begin{equation}\label{quanti-mix-derivative-00} 
 	%|\nabla^2 u(X,\nu)|\leq C(1+\sup_{\partial M}|\nabla u|)(1+\sup_{M}|\nabla u|)
 	% \end{equation}
 	% for any $X\in T_{\partial M}$ with $ |X|=1$,   where  $\nabla^2 u$ denotes the real Hessian of $u$.
 	\begin{equation}
 		\label{quanti-mix-derivative-00} 
 		\begin{aligned}
 			|\mathfrak{\tilde{g}}(\xi_\alpha,J\bar\xi_n)(x_0)| \leq C(1+\sup_M|\nabla u|), \mbox{ } \mbox{  } \forall 1\leq\alpha\leq n-1, \mbox{  } \forall x_0\in\partial M. %\nonumber
 		\end{aligned}
 	\end{equation}
 	%Moreover, if the right-hand side $\phi^{\frac{1}{n}}\in C^1(\bar M)$ then the constant $C$ does not depend on
 	%$(\inf_M\phi)^{-1}$.
 \end{proposition}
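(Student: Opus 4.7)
The plan is to use the barrier function method of Caffarelli--Nirenberg--Spruck, adapted to handle the first-order torsion contribution $W$. I will fix $x_0\in\partial M$ and work in the local holomorphic coordinates from \eqref{goodcoordinate1}, so that $\partial/\partial x_n$ is the inner normal. The goal reduces to bounding the real mixed derivatives $u_{s x_n}(x_0)$ for each tangential $\partial_s\in\{\partial_{x_\alpha},\partial_{y_\alpha}\}_{\alpha\le n-1}$, since these determine $\mathfrak{\tilde g}(\xi_\alpha, J\bar\xi_n)(x_0)$ up to data controlled by $|\underline u|_{C^2}$, $|\check\chi|_{C^0}$ and $|\nabla u|_{C^0(\partial M)}$. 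For each such $\partial_s$ I will introduce the tangential operator $T=\partial_s-(\sigma_s/\sigma_{x_n})\partial_{x_n}$, normalized so that $T\sigma\equiv 0$ on $\partial M\cap\Omega_\delta$. Since $u-\varphi=0$ on $\partial M$, the function $T(u-\varphi)$ then vanishes on $\partial M\cap\Omega_\delta$, and its inner normal derivative at $x_0$ equals $u_{s x_n}(x_0)$ modulo controlled data.

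Next, denote $F^{i\bar j}=\partial f/\partial\mathfrak{\tilde g}_{i\bar j}$ and write $L=F^{i\bar j}(\partial_i\overline\partial_j+\text{first-order terms from }W)$ for the linearization at $u$. I will differentiate $\log P_{n-1}(\lambda(\mathfrak{\tilde g}))=\psi$ in the direction $T$ and commute with $L$. Because the torsion tensors $W^k_{i\bar j},\,W^{\bar k}_{i\bar j}$ of \eqref{w-rewrite1} are intrinsic (independent of $u$), the commutator $[T,L]$ acts at worst on first derivatives of $u$, which gives the key inequality
\[
|L(T(u-\varphi))|\le C\,(1+\sup_M|\nabla u|)\Bigl(1+\sum_i F^{i\bar i}\Bigr).
\]
I will then construct a barrier $v=a(\underline u-u)+b\sigma-c\sigma^2+d|z|^2$ in $\Omega_\delta\cap M$ with $0<c\ll b\ll a$ and $d>0$ depending only on the stated data. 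The central estimate is $L(\underline u-u)\le -\theta(1+\sum_i F^{i\bar i})$ with $\theta$ independent of $(\inf_M\phi)^{-1}$, via the dichotomy of Lemma \ref{type-lemma1} and Remark \ref{lemma2.2}: when $\min_j\mu_j\le \varepsilon/(2n)$ the concavity of $\log P_{n-1}$ together with $\sum F^{i\bar i}(\underline{\mathfrak{\tilde g}}_{i\bar i}-\mathfrak{\tilde g}_{i\bar i})\ge \tfrac{\varepsilon}{2(n-1)}\sum_i F^{i\bar i}$ suffices; in the complementary regime the operator is uniformly elliptic with bounds depending only on $\sup_M\phi$ and the subsolution, and the piece $d|z|^2-c\sigma^2$ supplies the needed negativity. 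The $W$-induced contribution to $L(\underline u-u)$ is linear in $|\nabla u|$, which is why $v$ will be rescaled below by a factor $(1+\sup_M|\nabla u|)$.

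Finally, I will set $\Phi^\pm=\pm T(u-\varphi)+K(1+\sup_M|\nabla u|)\,v$ with $K$ large depending only on known data and check that $L\Phi^\pm\le 0$ in $\Omega_\delta\cap M$, $\Phi^\pm\ge 0$ on $\partial(\Omega_\delta\cap M)$, and $\Phi^\pm(x_0)=0$. The maximum principle will then force $\partial_{x_n}\Phi^\pm(x_0)\ge 0$, whence $|u_{s x_n}(x_0)|\le C(1+\sup_M|\nabla u|)$; letting $\partial_s$ range over all tangential real directions and reassembling the complex parts yields \eqref{quanti-mix-derivative-00}. The hard part will be the careful accounting of the torsion-induced first-order terms, both inside $L$ itself and in the commutator $[T,L]$: these $W$-contributions coming from \eqref{Z-tensor1} must be kept linear in $|\nabla u|$ (a naive application would produce a quadratic dependence incompatible with the statement), and simultaneously the dichotomy of Lemma \ref{type-lemma1}/Remark \ref{lemma2.2} has to be exploited to keep the final constant independent of $(\inf_M\phi)^{-1}$.
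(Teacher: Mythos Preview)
Your barrier scheme has a genuine gap at the step where you claim
\[
|L(T(u-\varphi))|\le C\,(1+\sup_M|\nabla u|)\Bigl(1+\sum_i F^{i\bar i}\Bigr).
\]
The commutator $[T,L]$ does \emph{not} act only on first derivatives of $u$. Two independent sources produce second-derivative terms that you cannot absorb with your barrier. First, the covariant-derivative commutation $(u_{t_\alpha})_{i\bar j}=u_{t_\alpha i\bar j}+\Gamma^l_{ik}u_{l\bar j}+\overline{\Gamma^l_{jk}}u_{i\bar l}+\cdots$ contributes, after contraction with $F^{i\bar j}$, a term of size $C\sum_i f_i|\lambda_i|$ throughout $\Omega_\delta$ (not just at $x_0$). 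Second, differentiating the variable coefficient $\widetilde\eta=\sigma_{t_\alpha}/\sigma_{x_n}$ in $T$ produces the cross term $F^{i\bar j}\widetilde\eta_i(u_{x_n})_{\bar j}$, again of second order in $u$. The correct differential inequality is \eqref{linear-1}, which carries the extra $-C\sum_i f_i|\lambda_i|$; since $\lambda\in\mathcal{P}_{n-1}$ allows individual $\lambda_i$ to be arbitrarily negative, this term is \emph{not} dominated by $(1+\sup_M|\nabla u|)\sum_i f_i$, and your barrier $K(1+\sup_M|\nabla u|)\,v$ cannot close.

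The paper repairs this by enlarging the barrier with the squared-tangential-gradient terms $\frac{1}{\sqrt{b_1}}\sum_{\tau<n}|(u-\varphi)_\tau|^2$ (and, inside $\Phi$, the piece $\frac{1}{\sqrt{b_1}}(u_{y_n}-\varphi_{y_n})^2$). Applying $\mathcal{L}$ to these yields positive quadratic terms $F^{i\bar j}u_{\tau i}u_{\tau\bar j}$, and an inequality of Guan \cite[Proposition 2.19]{Guan12a} converts them into $\tfrac{1}{8\sqrt{b_1}}\sum_{i\neq r}f_i\lambda_i^2$ for some index $r$. One then uses $\sum_i f_i\lambda_i=n$ together with Cauchy--Schwarz to absorb $\sum_i f_i|\lambda_i|$ into this good term plus $C\sqrt{b_1}\sum_i f_i$; only after this step does the dichotomy of Lemma \ref{type-lemma1}/Remark \ref{lemma2.2} finish the argument as you outline. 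The $1/\sqrt{b_1}$ scaling of the squared-gradient terms is exactly what keeps the final bound linear in $\sqrt{b_1}\sim 1+\sup_M|\nabla u|$. Without these additional ingredients your proposal does not go through.
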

 
 % \begin{proposition} \label{mix-Leviflat-product}
 %With the assumptions as in Theorem \ref{mix-Leviflat-thm1-product}, then there exists a positive constant depending on $|\varphi|_{C^{2}(\bar S)}$, $|\psi|_{C^{1}(\bar M)}$, $|\underline{u}|_{C^{2}(\bar M)}$, $\partial S$ up to second derivatives and other known data such that \eqref{quanti-mix-derivative-1} holds.
 % \end{proposition}

 \subsection{Tangential operators on the boundary}
 \label{Tangential-opera-1}
 For  a  given point $x_0\in \partial M$,
 we choose local holomorphic coordinates \eqref{goodcoordinate1}
 %\begin{equation}
 %\label{goodcoordinate1}
 %z=(z_{1},\ldots,z_{n}), \mbox{  } z_{j}=x_{j}+\sqrt{-1}y_{j},
 % \end{equation}
 centered at $x_0$ in a neighborhood which we assume to be contained in $M_{\delta}$,
 such that $x_0=\{z=0\}$, $g_{i\bar j}(0)=\delta_{ij}$ and $\frac{\partial}{\partial x_{n}}$ is the interior normal direction to $\partial M$ at $x_0$.
 For convenience we set
 \begin{equation}
 	t_{2k-1}=x_{k}, \ t_{2k}=y_{k},\ 1\leq k\leq n-1;\ t_{2n-1}=y_{n},\ t_{2n}=x_{n}.  \nonumber
 \end{equation}

 We define the tangential operator on $\partial M$ 
 \begin{equation}
 	\label{tangential-oper-general1}
 	\begin{aligned} 
 		\mathcal{T}=\nabla_{\frac{\partial}{\partial t_{\alpha}}}- \widetilde{\eta}\nabla_{\frac{\partial}{\partial x_{n}}}  
 	\quad	\mbox{ for each fixed }
 		1\leq \alpha< 2n,
 	\end{aligned}
 \end{equation}
 where $\widetilde{\eta}=\frac{\sigma_{t_{\alpha}}}{\sigma_{x_{n}}}$, $\sigma_{x_{n}}(0)=1,$ $\sigma_{t_\alpha}(0)=0$.
 One has $\mathcal{T}(u-\varphi)=0$ on $\partial M\cap \bar\Omega_\delta$.
 The boundary value condition also gives for each $1\leq \alpha, \beta<n$, 
 \begin{equation}\label{left-up}
 	(u-\varphi)_{t_{i}t_{j}}(0)= (u-\varphi)_{x_{n}}(0)\sigma_{t_{i}t_{j}}(0)
 	%=\frac{(u-\varphi)_\nu}{\sigma_\nu}(0)\sigma_{t_{\alpha}t_{\beta}}(0),
 	\mbox{  }\forall 1\leq i, j<2n.
 \end{equation}

 Let's turn our attention to the setting of complex manifolds with \textit{holomorphically flat} boundary.
 Given $x_0\in \partial M$, one can pick local holomorphic coordinates  
 \begin{equation}
 	\begin{aligned}
 		\label{holomorphic-coordinate-flat}
 		(z_1,\cdots, z_n), \mbox{  } z_i=x_i+\sqrt{-1}y_i, 
 	\end{aligned}
 \end{equation}
 centered at $x_0$ such that
 $\partial M$ is locally of the form $\mathfrak{Re}(z_n)=0$ and $g_{i\bar j}(x_0)=\delta_{ij}$. 
 Under the holomorphic coordinate \eqref{holomorphic-coordinate-flat}, we can take
 \begin{equation}
 	\label{tangential-oper-Leviflat1}
 	\begin{aligned}
 		\mathcal{T}=D:= \frac{\partial}{\partial x_\alpha}, 
 		\mbox{ }\frac{\partial}{\partial y_\alpha},  
 		\quad 1\leq \alpha\leq n-1.
 	\end{aligned}
 \end{equation}
 It  is noteworthy that such local holomorphic coordinate system \eqref{holomorphic-coordinate-flat} is only needed in the proof  of Proposition \ref{mix-Leviflat}. In addition,  when $M=X\times S$,
 $D= \frac{\partial}{\partial x_{\alpha}},
   \frac{\partial}{\partial y_{\alpha}},$ where  $z'=(z_1,\cdots z_{n-1})$ is a local holomorphic coordinate of $X$.

 For simplicity we denote  the tangential  operator on $\partial M$ by
 \begin{equation}
 	\label{tangential-operator123}
 	\begin{aligned}
 		\mathcal{T}=\nabla_{\frac{\partial}{\partial t_{\alpha}}}-\gamma\widetilde{\eta}\nabla_{\frac{\partial}{\partial x_{n}}}.
 	\end{aligned}
 \end{equation}
 Here  $\gamma=0$ (i.e. $\mathcal{T}=\nabla_{\frac{\partial}{\partial t_{\alpha}}}=D$) if $\partial M$ is holomorphically flat, while  for general boundary we take $\gamma=1$.
 
 From \eqref{asym-sigma1}  we have  $|\widetilde{\eta}|\leq C'|z|$ on $\Omega_\delta$.
 %On $\partial M\cap \bar\Omega_\delta$,  
 The boundary value condition $(u-\varphi)|_{\partial M}=0$  
 gives $\mathcal{T}(u-\varphi)|_{\partial M}=0.$ Combining with  \eqref{c-0-c1b}, we have
 \begin{equation}\begin{aligned}\label{bdr-t}
 		\mathcal{T}(u-\varphi)=0 \mbox{ and } |(u-\varphi)_{t_{\alpha}}|\leq C|z| \mbox{ } \mbox{ on } \partial M\cap\bar \Omega_\delta,
 		%\sup_{\partial M}|\nabla (u-\varphi)|,
 		\mbox{  } \forall 1\leq \alpha<2n.
 	\end{aligned}
 \end{equation}
 %here we use \eqref{asym-sigma1} and \eqref{c-0-c1b} to derive $|\widetilde{\eta}|\leq C'|z|$ on $\Omega_\delta$. See also \cite{Guan1998The}.
 %See also \cite{Guan2010Li,Guan2015Sun}.

 %\subsection{Quantitative boundary estimates for tangential-normal derivatives}
 \subsection{Completion of proof of Proposition \ref{mix-general}}
 \label{Quantitative-boundes-mix}
 
 Let $F(A)=f(\lambda(A))$, and we denote 
 \begin{equation}
 	\begin{aligned}
 		\,& F^{i\bar j}(A)=\frac{\partial F}{\partial a_{i\bar j}}, \,& A=(a_{i\bar j}).  \nonumber
 	\end{aligned}
 \end{equation}
 The linearized operator of equation \eqref{MA-n-1} at $u$ is given by
 \begin{equation}
 	\begin{aligned}
 		\mathcal{L}v = F^{i\bar j}(\mathfrak{\tilde{g}}[u])(v_{i\bar j}+W_{i\bar j}^k v_k+W_{i\bar j}^{\bar k}v_{\bar k}).  \nonumber
 	\end{aligned}
 \end{equation}
 where $W_{i\bar j}^k$ and $W_{i\bar j}^{\bar k}$ are defined in \eqref{w-rewrite1}. For simplicity, we denote
 $$F^{i\bar j}=F^{i\bar j}(\mathfrak{\tilde{g}}[u]).$$
 
 We derive quantitative boundary estimates for tangential-normal derivatives by using barrier functions. 
 This type of construction of barrier functions  
 goes back at least to \cite{Guan1993Spruck,Guan1998The}.  %\cite{Hoffman1992Boundary,Guan1993Spruck,Guan1998The}. 
 %We also refer to \cite{Boucksom2012,Chen,Phong-Sturm2010} for more references concerning complex Monge-Amp\`ere equation.
 We shall point out that the constants in proof of quantitative boundary estimates,
 such as $C$, $C_{\Phi}$,  $C_1$, $C_1'$, $C_2$, $A_1$,  $A_2$, $A_3,$   etc, 
 depend on  neither  $|\nabla u|$ nor  $(\inf_M\phi)^{-1}$, nor $|\nabla \psi|$.
 
 By direct calculations, one derives  
 \begin{equation}
 	\begin{aligned}
 		\,&   u_{x_{k} l}=u_{l x_{k}}+T^{p}_{kl}u_{p}, \,&
 		u_{y_{k} l}=u_{l y_{k}}+\sqrt{-1}T^{p}_{kl}u_{p},  \nonumber
 	\end{aligned}
 \end{equation}
 \begin{equation}
 	\begin{aligned}
 		\,& (u_{x_k})_{\bar j}=u_{x_k\bar j}+\overline{\Gamma_{kj}^l} u_{\bar l}, 
 		\,& (u_{y_k})_{\bar j}=u_{y_k\bar j}-{\sqrt{-1}}\overline{\Gamma_{kj}^l} u_{\bar l}, \nonumber
 	\end{aligned}
 \end{equation}
 \begin{equation}
 	\begin{aligned}
 		(u_{x_k})_{i\bar j}=
 		u_{x_ki\bar j}+\Gamma_{ik}^lu_{l\bar j}+\overline{\Gamma_{jk}^l} u_{i\bar l}-g^{l\bar m}R_{i\bar j k\bar m}u_l,  \nonumber
 	\end{aligned}
 \end{equation}
 \begin{equation}
 	\begin{aligned}
 		(u_{y_k})_{i\bar j}=
 		u_{y_ki\bar j}+\sqrt{-1}(\Gamma_{ik}^l u_{l\bar j}-\overline{\Gamma_{jk}^l} u_{i\bar l})-\sqrt{-1}g^{l\bar m}R_{i\bar j k\bar m}u_l,
 		\nonumber
 	\end{aligned}
 \end{equation}
 %\begin{equation}\label{yuan-Bd2}\begin{aligned}
 %F^{i\bar j}u_{x_{k}i\bar j}= F^{i\bar j}u_{i\bar j x_{k}}+g^{l\bar m}F^{i\bar j}R_{i\bar j k\bar m}u_{l}-2\mathfrak{Re}(F^{i\bar j}T^{l}_{ik}u_{l\bar j}), \nonumber
 % \end{aligned}\end{equation}
 % \begin{equation}  \begin{aligned}
 %F^{i\bar j}u_{y_{k}i\bar j}=F^{i\bar j}u_{i\bar j y_{k}}+\sqrt{-1}g^{l\bar m}F^{i\bar j}R_{i\bar j k\bar m}u_{l}+2\mathfrak{Im}(F^{i\bar j}T^{l}_{ik}u_{l\bar j}), \nonumber
 %\end{aligned}\end{equation}
 where %The torsion and curvature tensors are  
 \[R_{i\bar jk\bar l}= -\frac{\partial^2 g_{k\bar l}}{\partial z_i \partial\bar z_j}
 + g^{p\bar q}\frac{\partial g_{k\bar q}}{\partial z_i}\frac{\partial g_{p\bar l}}{\partial \bar z_j}.\]
 As a consequence
 \begin{equation}
 	\label{linear-1}
 	\begin{aligned}
 		\mathcal{L}(\pm u_{t_{\alpha}}) \geq \pm\psi_{t_{\alpha}}
 		-C(1+|\nabla u|)\sum_{i=1}^n f_i -C\sum_{i=1}^n f_i |\lambda_i|. %\nonumber
 	\end{aligned}
 \end{equation}

 We denote 
  $$b_{1}=1+\sup_{M} |\nabla u|^{2}.$$
% $$b_{1}=1+2\sup_{M} |\nabla u|^{2}+2\sup_{M} |\nabla \varphi|^{2}.$$
 \begin{lemma}
 	\label{yuan-key0}
 	Given $x_0\in\partial M$.
 	Let $u$ be a $C^3$-smooth $(n-1)$-PSH solution to equation \eqref{MA-n-1}, and $\Phi$ is defined as
 	\begin{equation}
 		\label{Phi-def1}
 		\begin{aligned}
 			\Phi=\pm \mathcal{T}(u-\varphi)+\frac{\gamma}{\sqrt{b_1}}(u_{y_{n}}-\varphi_{y_{n}})^2 \mbox{ } \mbox{ in } \Omega_\delta.
 		\end{aligned}
 	\end{equation}
 	Then there is a positive constant $C_{\Phi}$ depending on
 	$|\varphi|_{C^{3}(\bar M)}$, $|\chi|_{C^{1}(\bar M)}$
 	%$|\nabla \psi|_{C^{0}(\bar M)}$
 	and other known data 
 	such that
 	\begin{equation}
 		\label{yuan-1}
 		\begin{aligned}
 			\mathcal{L}\Phi \geq 
 			-C_{\Phi}  \sqrt{b_1}  \sum_{i=1}^n f_i  - C_{\Phi}  \sum_{i=1}^n f_i|\lambda_i|
 			-C_\Phi |\nabla\psi| 
 		\mbox{ } \mbox{ } \mbox{ on } \Omega_{\delta} \nonumber
 		\end{aligned}
 	\end{equation}
 	for some small positive constant $\delta$.
 	In particular, if  $\partial M$ is holomorphically flat and $\varphi\equiv \mathrm{constant}$
 	then $C_{\Phi}$ depends on $|\chi|_{C^{1}(\bar M)}$
 	% $|\nabla \psi|_{C^{0}(\bar M)}$
 	and other known data.
 	%Furthermore, if $M=X\times S$ and $\varphi\in C^2(\partial S)$ then we have barrier function $\Phi=  D u$, and then the constant $C_{\Phi}$ depends on $|\varphi|_{C^{2}(\bar S)}$, $|\chi|_{C^{1}(\bar M)}$, $|\nabla \psi|_{C^{0}(\bar M)}$ and other known data. Where $D=\pm \frac{\partial}{\partial x_{\alpha}}, \pm \frac{\partial}{\partial y_{\alpha}}$ is as mentioned above.

 \end{lemma}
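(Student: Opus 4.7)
The plan is to split $\Phi$ into three conceptual pieces—the tangential derivative $\pm(u-\varphi)_{t_\alpha}$, the product correction $\mp\gamma\widetilde{\eta}(u-\varphi)_{x_n}$, and the quadratic penalty $\gamma b_1^{-1/2}(u_{y_n}-\varphi_{y_n})^2$—compute $\mathcal{L}$ on each, and combine. For the first piece I would apply \eqref{linear-1} directly and absorb the auxiliary $-\mathcal{L}\varphi_{t_\alpha}=O(|\varphi|_{C^3(\bar M)}\sum f_i)$; using $1+|\nabla u|\leq\sqrt{b_1}$ this already yields the target RHS. When $\gamma=0$ (holomorphically flat boundary with $\varphi$ constant) this closes the proof with $C_\Phi$ depending only on $|\chi|_{C^1(\bar M)}$, which is precisely the refined version claimed in the statement.

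For $\gamma=1$ I would expand $\mathcal{L}[\widetilde{\eta}(u-\varphi)_{x_n}]$ via the Leibniz identity
\begin{equation*}
\mathcal{L}(fg) = f\mathcal{L}g + g\mathcal{L}f + F^{i\bar j}\bigl(f_i g_{\bar j}+f_{\bar j}g_i\bigr),
\end{equation*}
taking $f=\widetilde{\eta}$ and $g=(u-\varphi)_{x_n}$. The ``pure'' contributions are tame: $\widetilde{\eta}\mathcal{L}g$ carries the smallness factor $|\widetilde{\eta}|\leq C|z|\leq C\delta$ on $\Omega_\delta$, which absorbs its $\sum f_i|\lambda_i|$ content; and $g\mathcal{L}\widetilde{\eta}$ is at worst $\sqrt{b_1}\sum f_i$ since $\widetilde{\eta}$ is a fixed smooth function with $\mathcal{L}\widetilde{\eta}=O(\sum f_i)$. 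The dangerous piece is the mixed cross term $F^{i\bar j}(\widetilde{\eta}_i g_{\bar j}+\widetilde{\eta}_{\bar j}g_i)$, which a priori contains second derivatives of $u$ of size $O(\lambda)$ without the $\widetilde{\eta}$-mollifier.

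The main obstacle is absorbing this cross term, and the squared penalty is engineered precisely for this. Expanding $\mathcal{L}$ on the square,
\begin{equation*}
\mathcal{L}[(u_{y_n}-\varphi_{y_n})^2] = 2(u_{y_n}-\varphi_{y_n})\mathcal{L}(u_{y_n}-\varphi_{y_n}) + 2F^{i\bar j}(u_{y_n}-\varphi_{y_n})_i(u_{y_n}-\varphi_{y_n})_{\bar j},
\end{equation*}
the first summand, multiplied by $b_1^{-1/2}$ and using $|u_{y_n}-\varphi_{y_n}|\leq C\sqrt{b_1}$, falls within the target RHS exactly as in the first piece; the second summand is non-negative. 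I would then apply the Cauchy--Schwarz inequality
\begin{equation*}
\bigl|F^{i\bar j}\widetilde{\eta}_i g_{\bar j}\bigr| \leq \tfrac{\epsilon}{2} F^{i\bar j} g_i g_{\bar j} + \tfrac{1}{2\epsilon} F^{i\bar j}\widetilde{\eta}_i\widetilde{\eta}_{\bar j}
\end{equation*}
with the precisely tuned choice $\epsilon=b_1^{-1/2}$; the last term is bounded by $C\sqrt{b_1}\sum f_i$ since $|\widetilde{\eta}_i|$ is uniformly bounded. The residual contribution $\tfrac{1}{2}b_1^{-1/2}F^{i\bar j} g_i g_{\bar j}$ must be dominated by the positive quadratic $2b_1^{-1/2}F^{i\bar j}(u_{y_n}-\varphi_{y_n})_i(u_{y_n}-\varphi_{y_n})_{\bar j}$ produced by the penalty. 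This rests on the algebraic observation that both $(u_{x_n})_i$ and $(u_{y_n})_i$ are linear combinations of the holomorphic derivative $u_{ni}$ and the mixed derivative $u_{i\bar n}$, so the two $F^{i\bar j}$-quadratic forms are equivalent up to error terms controlled by $\sum f_i|\lambda_i|$ (from the mixed derivatives $u_{i\bar n}$, which are diagonalized by our coordinate choice and bounded by eigenvalues) plus lower-order Christoffel and $W$-contributions handled via Lemma \ref{key-lemma34} and the already established \eqref{linear-1}. Collecting the three pieces on $\Omega_\delta$ for $\delta$ sufficiently small yields the claimed lower bound, with the $|\varphi|_{C^3(\bar M)}$ dependence arising solely from the subtractions of $\varphi$-derivatives in the first two steps.
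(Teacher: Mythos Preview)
Your overall strategy matches the paper's: split $\Phi$ into the three pieces, use \eqref{linear-1} on the tangential part, Leibniz on $\widetilde\eta(u-\varphi)_{x_n}$, and let the positive quadratic from the penalty $\frac{\gamma}{\sqrt{b_1}}(u_{y_n}-\varphi_{y_n})^2$ absorb the dangerous cross term via Cauchy--Schwarz with parameter $b_1^{-1/2}$. The holomorphically-flat refinement is handled exactly as you describe.

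There is, however, a technical issue in the order you propose for handling the cross term. You apply Cauchy--Schwarz first to obtain $\tfrac12 b_1^{-1/2}F^{i\bar j}g_ig_{\bar j}$ with $g=(u-\varphi)_{x_n}$, and then assert that the $x_n$-quadratic and the $y_n$-quadratic are ``equivalent up to error terms controlled by $\sum f_i|\lambda_i|$''. This equivalence is not clean: a short computation gives
\[
F^{i\bar j}(u_{x_n})_i(u_{x_n})_{\bar j}-F^{i\bar j}(u_{y_n})_i(u_{y_n})_{\bar j}
=4\,\mathfrak{Re}\bigl[F^{i\bar j}u_{ni}u_{n\bar j}\bigr],
\]
which couples the uncontrolled pure second derivative $u_{ni}$ to the mixed derivative $u_{n\bar j}$; bounding this by $\sum f_i|\lambda_i|$ alone is not straightforward, and a naive Cauchy--Schwarz produces $\sum f_i\lambda_i^2$-type errors that are not admissible in the stated lower bound for $\mathcal L\Phi$.

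The paper avoids this by reversing your two steps: decompose \emph{before} Cauchy--Schwarz. Writing $(u_{x_n})_{\bar j}=2u_{n\bar j}-\sqrt{-1}\,u_{y_n\bar j}+\text{lower order}$, the cross term $F^{i\bar j}\widetilde\eta_i(u_{x_n})_{\bar j}$ splits into (a) $2F^{i\bar j}\widetilde\eta_i u_{n\bar j}$, which is directly $\le C\sum f_i|\lambda_i|$ since $u_{n\bar j}$ is a mixed derivative, and (b) $-\sqrt{-1}\,F^{i\bar j}\widetilde\eta_i u_{y_n\bar j}$, on which Cauchy--Schwarz with $\epsilon=b_1^{-1/2}$ produces exactly $\tfrac{1}{\sqrt{b_1}}F^{i\bar j}u_{y_n i}u_{y_n\bar j}+C\sqrt{b_1}\sum f_i$, absorbed by the penalty. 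With this reordering your plan goes through verbatim.
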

 
 \begin{proof}
 	%By \eqref{linear-1}, direct computation and Cauchy-Schwarz inequality, one obtains 
 	Together with 
 	\eqref{linear-1} and Cauchy-Schwarz inequality, one can compute and obtain 
 	\begin{equation}
 		\label{bdy-g1}
 		\begin{aligned}
 			\mathcal{L}(\pm\mathcal{T}u)
 			\geq \,&
 			-C\sqrt{b_1}\sum_{i=1}^n f_i  
 			- C\sum_{i=1}^n f_i|\lambda_i|
 			-\frac{\gamma}{\sqrt{b_1}} F^{i\bar j}u_{y_n i}u_{y_n \bar j} \pm\mathcal{T} \psi, \nonumber
 		\end{aligned}
 	\end{equation}
 	\begin{equation}
 		\begin{aligned}
 			F^{i\bar j}(\widetilde{\eta})_i (u_{x_{n}})_{\bar j}
 			%=\,& F^{i\bar j}(\widetilde{\eta})_i(2u_{n\bar j}+\sqrt{-1}(u_{y_n})_{ \bar j}) \\
 			\leq \,& C\sum_{i=1}^nf_i|\lambda_i|+ \frac{1}{\sqrt{b_1}}F^{i\bar j}u_{y_n i} u_{y_n \bar j} +C\sqrt{b_1}\sum_{i=1}^n f_i,
 			\nonumber
 		\end{aligned}
 	\end{equation}
 	%Applying \eqref{yuan-Bd2}, we obtain
 	\begin{equation}
 		\label{yuan-hao2}
 		\begin{aligned}
 			\mathcal{L}((u_{y_{n}}-\varphi_{y_{n}})^2)
 			\geq %\,&
 			F^{i\bar j}u_{y_n i}u_{y_n \bar j}
 			-C\left(1+|\nabla u|^2\right)   \sum_{i=1}^n f_i  
 			%\\\,&
 			- C|\nabla u|\sum_{i=1}^n f_i|\lambda_i|  - C|\nabla\psi|(1+|\nabla u|). \nonumber
 		\end{aligned}
 	\end{equation}
 	Putting these inequalities together we complete the proof.
 \end{proof}
 
 To estimate the quantitative boundary estimates for mixed derivatives, 
 we should employ barrier function of the form
 \begin{equation}
 	\label{barrier1}
 	\begin{aligned}
 		v= (\underline{u}-u)
 		- t\sigma
 		+N\sigma^{2}  \mbox{ }  \mbox{  in  } \Omega_{\delta},
 	\end{aligned}
 \end{equation}
 where $t$, $N$ are positive constants to be determined.

 Let $\delta>0$ and $t>0$ be sufficiently small with $N\delta-t\leq 0,$ such that, in $\Omega_{\delta}$,    $v\leq 0$, $\sigma$ is $C^2$ and
 \begin{equation}
 	\label{bdy1}
 	\begin{aligned}
 		\frac{1}{4} \leq |\nabla \sigma|\leq 2,  \quad
 		|\mathcal{L}\sigma | \leq   C_2\sum_{i=1}^n f_i. %   \mbox{  }
 		%|\mathcal{L}\rho^2| \leq C_2\sum_{i=1}^n f_{i}. 
 	\end{aligned}
 \end{equation}

 We construct the barrier function as follows:
 \begin{equation}
 	\label{Psi}
 	\begin{aligned} 
 		\widetilde{\Psi} =A_1 \sqrt{b_1}v -A_2 \sqrt{b_1} |z|^2 + \frac{1}{\sqrt{b_1}} \sum_{\tau<n}|(u-\varphi)_{\tau}|^2+ A_3 \Phi \mbox{ } \mbox{ in } \Omega_\delta.
 	\end{aligned}
 \end{equation}

 \begin{proof}
 	[Proof of Proposition \ref{mix-general}]

 	If $A_2\gg A_3\gg1$ then one has  $\widetilde{\Psi}\leq 0 \mbox{ on } \partial \Omega_\delta$, here we use  \eqref{bdr-t}.
 	Note $\widetilde{\Psi}(x_0)=0$.  
 	%It suffices  to prove $$\mathcal{L}\widetilde{\Psi}\geq 0 \mbox{ on } \Omega_\delta.$$ (Then $\widetilde{\Psi}\leq 0 $ in $\Omega_{\delta}$, and  $(\nabla_\nu \widetilde{\Psi})(x_0)\leq 0$ gives the bound).
 	It suffices  to prove $$\mathcal{L}\widetilde{\Psi}\geq 0
 	 \mbox{ } \mbox{ on } \Omega_\delta,$$
 	which yields $\widetilde{\Psi}\leq 0 $ in $\Omega_{\delta}$, and then $(\nabla_\nu \widetilde{\Psi})(x_0)\leq 0$.

 	By a direct computation one has
 	\begin{equation}
 		\label{L-v}
 		\begin{aligned}
 			\mathcal{L}v\geq F^{i\bar j} (\mathfrak{\underline{\tilde{g}}}_{i\bar j}-\mathfrak{\tilde{g}}_{i\bar j})-C_2 |2N\sigma-t|\sum_{i=1}^n f_i +2N F^{i\bar j}\sigma_i \sigma_{\bar j}. \nonumber
 		\end{aligned}
 	\end{equation}
 	Applying \cite[Lemma 6.2]{CNS3},  with a certain order of $\underline{\lambda}$,
 	\begin{equation}
 		\begin{aligned}
 			F^{i\bar j}   \mathfrak{\underline{\tilde{g}}}_{i\bar j} %=F^{i\bar j}(\mathfrak{\tilde{g}})  \mathfrak{\underline{\tilde{g}}}_{i\bar j} 
 			\geq \sum_{i=1}^n f_i(\lambda) \underline{\lambda}_i=\sum_{i=1}^n f_i  \underline{\lambda}_i. \nonumber
 		\end{aligned}
 	\end{equation}
 	By \cite[Proposition 2.19]{Guan12a} there is an index $r$ so that 
 	\begin{equation}
 		\label{L-u-2}
 		\begin{aligned}\sum_{\tau<n} F^{i\bar j}\mathfrak{\tilde{g}}_{\bar\tau i}\mathfrak{\tilde{g}}_{\tau \bar j}\geq \frac{1}{4}\sum_{i\neq r} f_{i}\lambda_{i}^{2}. \nonumber
 		\end{aligned}
 	\end{equation}
 	
 	In what follows we denote  $\widetilde{u}=u-\varphi$.
 	By straightforward computations  
 	\begin{equation}
 		\begin{aligned}
 			\mathcal{L} \left(\sum_{\tau<n}|\widetilde{u}_{\tau}|^2 \right)
 			\geq \,&
 			\frac{1}{2}\sum_{\tau<n}F^{i\bar j} \mathfrak{\tilde{g}}_{\bar \tau i} \mathfrak{\tilde{g}}_{\tau \bar j}
 			-C_1'\sqrt{b_1} \sum_{i=1}^n f_{i}|\lambda_{i}|  -C_1' b_1\sum_{i=1}^n f_{i} -C_1'\sqrt{ b_1}|\nabla \psi| \\ % \nonumber
 			%\end{aligned}
 			%\end{equation}
 			%where we use  the elementary inequality $|a-b|^2\geq  \frac{1}{2}|a|^2- |b|^2$.
 			%So,
 			%\begin{equation}
 			%\label{L-u-2}
 			%\begin{aligned}
 			%\mathcal{L} (\sum_{\tau<n}|\widetilde{u}_{\tau}|^2  )
 			\geq \,&
 			\frac{1}{8}\sum_{i\neq r} f_{i}\lambda_{i}^{2}
 			-C_1'\sqrt{b_1} \sum_{i=1}^n  f_{i}|\lambda_{i}|-C_1' b_1 \sum_{i=1}^n f_{i} -C_1'\sqrt{b_1}|\nabla \psi|. \nonumber
 		\end{aligned}
 	\end{equation}
 	%In particular, if $M=X\times S$ and $\varphi\in C^2(\partial S)$, then $\widetilde{u}_{\tau}=u_{\tau}$ for each $1\leq\tau\leq n-1$. 

 	We are going to deal with $\sum_{i=1}^n f_i |\lambda_i|:$
 	\begin{enumerate}
 		\item
 		% \begin{equation} \label{inequ-1}\begin{aligned}
 		$\sum_{i=1}^n f_i |\lambda_i| = \sum_{i=1}^n f_i\lambda_i -2\sum_{\lambda_i<0} f_{i} \lambda_i
 		=n-2\sum_{\lambda_i<0} f_{i} \lambda_i;$
 		\item $\sum_{i=1}^n f_i |\lambda_i| =2\sum_{\lambda_i\geq 0} f_i\lambda_i -\sum_{i=1}^n f_{i} \lambda_i =2\sum_{\lambda_i\geq 0} f_i\lambda_i -n.$ 
 	\end{enumerate}
 	In conclusion, combining with Cauchy-Schwarz inequality, we have
 	\begin{equation}
 		\label{flambda}
 		\begin{aligned}
 			\sum_{i=1}^n f_i |\lambda_i|
 			\leq  \frac{\epsilon}{8\sqrt{b_1}}\sum_{i\neq r} f_i\lambda_i^2 +\frac{8\sqrt{b_1}}{\epsilon}\sum_{i=1}^n f_i+n. \nonumber
 			%\sum_{i=1}^n f_i +\sum_{i=1}^n f_{i}(\underline{\lambda}_i-\lambda_i).
 		\end{aligned}
 	\end{equation}
 	
 	Taking $\epsilon=\frac{1}{C_1'+A_3C_\Phi }$.
 	% By  careful but straightforward computation, one gets
 	Putting the above inequalities together we have
 	\begin{equation}
 		\label{bdy-main-inequality}
 		\begin{aligned}
 			\mathcal{L}\widetilde{\Psi} \geq \,&
 			A_1 \sqrt{b_1} \sum_{i=1}^n f_{i}(\underline{\lambda}_i-\lambda_i)
 			% +\frac{1}{16\sqrt{b_1}} \sum_{i\neq r} f_i\lambda_i^2
 			+ 2A_1N \sqrt{b_1} F^{i\bar j}\sigma_i \sigma_{\bar j}
 			\\ \,&
 			-  \{ C_1'+ A_2+A_3C_\Phi  +A_1C_2  |2N\sigma-t|
 			+8(C_1'+A_3C_\Phi)^2   
 			\\
 			\,&
 			+n(C_1'+A_3C_\Phi)/{\sqrt{b_1}}\} \sqrt{b_1}\sum_{i=1}^n f_i
 			-(C_1' +A_3 C_\Phi)|\nabla \psi|.
 			%-\sup_{\bar M}|\underline{\lambda}|(C_1'+A_3C_\Phi) b_1^{\frac{s}{2}} \sum_{i=1}^n f_i.
 		\end{aligned}
 	\end{equation}

 	Let's take $\varepsilon=\inf_M \min_{i}\underline{\mu}_i$  as in Lemma \ref{type-lemma1}, let $\theta_0= \frac{2\varepsilon^{n}}{(2n)^{n+1}} e^{-\max_M \psi}$.
 	
 	{\bf Case 1}: Suppose  %\eqref{key-yuanrr-1} holds, i.e.
 	\begin{equation}
 		\begin{aligned}
 			%\mathrm{dist}(\mu,\partial\Gamma_n)=
 			\min_j\mu_j\leq \frac{\varepsilon}{2n}. \nonumber
 		\end{aligned}
 	\end{equation}
 	By Lemma \ref{type-lemma1}
 	we have
 	\begin{equation}
 		\label{guan-key1}
 		\begin{aligned}
 			\sum_{i=1}^n f_{i}(\underline{\lambda}_i-\lambda_i)  \geq \frac{\varepsilon}{2(n-1)} \sum_{i=1}^n f_i.
 		\end{aligned}
 	\end{equation}
 	%where we take $\beta= \frac{1}{2}\min_{\bar M} dist(\nu_{\underline{\lambda} }, \partial \Gamma_n)$ as above,
 	%$\varepsilon$ is the positive constant in Lemma \ref{type-lemma1}. 
 	%In order to control the
 	The bad term $-(C_1' +A_3 C_\Phi)|\nabla \psi|$ can be controlled according to \eqref{sumfi-3}.
 	%From the original proof of Lemma \ref{type-lemma1} presented in \cite{GSS14}, 
 	%we can check that $\varepsilon$ is determined by $f$, $\beta$ and $\underline{\lambda}$.
 	%Note that \eqref{yuanbd-11} implies $A_1C_2 |2N\sigma-t|\leq \frac{1}{2}A_1 \varepsilon$. 
 	In addition, we  can choose $\delta$ and $t$  small enough  such that
 	\begin{equation}
 		\label{yuanbd-11}
 		\begin{aligned}
 			%\max\{|2N\delta-t|, t\}
 			|2N\delta-t| \leq \min\left\{\frac{\varepsilon}{8C_{2}}, \frac{\theta_0}{16C_2}\right\}.
 		\end{aligned}
 	\end{equation}
 	%where $\theta_0$ is the constant from \eqref{2nd-case1} below.
 	%where $\beta:= \frac{1}{2}\min_{\bar M} \mathrm{dist}(\nu_{\underline{\lambda} }, \partial \Gamma_n)$, $\varepsilon$ is the constant corresponding to $\beta$ in Lemma \ref{type-lemma1}, and $C_2$ is the constant in \eqref{bdy1}.
 	Taking $A_1\gg 1$ we can derive $$\mathcal{L}\widetilde{\Psi} \geq 0 \mbox{ on } \Omega_\delta.$$

 	{\bf Case 2}: If $\min_j\mu_j> \frac{\varepsilon}{2n}$, 
 	then $\max_i\mu_i \leq (2n)^{n-1}e^\psi/\varepsilon^{n-1}$.    By \eqref{yuan-37} %and Remark \ref{lemma2.2}
 	we have
 	\begin{equation}
 		\label{2nd-case1}
 		\begin{aligned}
 			\,& 
 			f_{i} \geq   \theta_0\sum_{i=j}^n f_j, 
 			\,& 
 			\forall 1\leq i\leq n.
 		\end{aligned}
 	\end{equation}
 %	where $\theta_0= \frac{\varepsilon^{n}}{(2n)^{n}} e^{-\max_M \psi}$.  
 	
 	%As in \cite{GSS14} there exist  two uniformly positive constants $c_0$ and $C_0$, so that
 	%\begin{equation}\label{bdy22}\begin{aligned}
 	%\sum_{i\neq r} f_i \lambda_i^2 \geq c_0 |\lambda|^2 \sum_{i=1}^n f_i -C_0 \sum_{i=1}^n f_{i}.  \nonumber
 	%\end{aligned}\end{equation}
 	%The original  proof  uses $\sum_{i=1}^n f_i (\underline{\lambda}_i-\lambda_i)\geq 0$.
 	%We can check that $c_0$ depends only on $\beta$ and $n$, and $C_0$ depends only on $\beta$, $n$ and $\sup_{\bar M}|\underline{\lambda} |$.

 	All the bad terms containing $\sum_{i=1}^n f_i$ in \eqref{bdy-main-inequality} can be controlled by %the good term
 	\begin{equation}
 		\label{bbvvv}
 		\begin{aligned}
 			A_1N \sqrt{b_1} F^{i\bar j}\sigma_i \sigma_{\bar j} \geq \frac{A_1N \theta_0   \sqrt{b_1}}{16}\sum_{i=1}^n f_i  \mbox{ on } \Omega_\delta.
 		\end{aligned}
 	\end{equation}
 	On the other hand,   
 	%except for  when $\sum_{i=1}^n f_i$ has a lower bound ($\sum_{i=1}^n f_i( \lambda)\geq \kappa$),
 	the bad term  $-(C_1' +A_3 C_\Phi)|\nabla \psi|$ in last term of \eqref{bdy-main-inequality}
 	can be dominated by combining  \eqref{sumf_i1} with \eqref{bbvvv}.
 	%\begin{equation}\label{lambda-sumf-1}
 	% \frac{c_0 }{32 \sqrt{b_1}}|\lambda|^2 \sum_{i=1}^n f_i + \frac{A_1N\beta \sqrt{b_1}}{32\sqrt{n}}\sum_{i=1}^n f_i \geq  \frac{\sqrt{c_0\beta A_1N}  }{16\sqrt{n}} |\lambda|\sum_{i=1}^n f_i.  \nonumber
 	%\end{equation}
 	Then  $\mathcal{L}(\widetilde{\Psi}) \geq 0 $ on $\Omega_\delta$, if one chooses $A_1N\gg 1$.
 \end{proof}
 
When the boundary is holomorphically flat  %i.e. $\gamma=0$,
 and the boundary data is a constant, we have a slightly delicate result.
 %In the case when $\partial M$ is  {holomorphically flat} and $\varphi$ is a constant, $\Phi= \pm{D}u$ in \eqref{Phi-def1} and the local barrier function  in \eqref{Psi} reads as follows $$\widetilde{\Psi} =A_1 \sqrt{b_1}v -A_2 \sqrt{b_1} |z|^2 + \frac{1}{\sqrt{b_1}} \sum_{\tau<n}|{u}_{\tau}|^2\pm A_3 {D}u,$$  where $D$ is given in \eqref{tangential-oper-Leviflat1}.
 
 \begin{proposition}
 	\label{mix-Leviflat}
 	Let $(M,J,\omega)$ be a compact Hermitian manifold with  holomorphically flat boundary.
 	Suppose, in addition to 
 	\eqref{subsolution-MA-n-1}  
 	and $\phi^{{1}/{n}}\in C^1(\bar M)$, that the boundary data $\varphi$ is a constant.
 	Then
 	every strictly $(n-1)$-PSH solution $u\in C^3(M)\cap C^2(\bar M)$ of Dirichlet problem \eqref{MA-n-1} and \eqref{bdy-value-2} satisfies
 	\begin{equation}
 		%\label{quanti-mix-derivative-1}
 		|\mathfrak{\tilde{g}}(\xi_\alpha,J\bar\xi_n)|\leq C %(1+\sup_{\partial M}|\nabla u|)
 		\left(1+\sup_{M}|\nabla u|\right), \nonumber
 	\end{equation}
 	where $C$ depends on %$|\varphi|_{C^{3}(\bar M)}$, 
 	$|\phi^{{1}/{n}}|_{C^{1}(\bar M)}$, $|\underline{u}|_{C^{2}(\bar M)}$, $|\nabla u|_{C^0(\partial M)}$,
 	$\partial M$ 
 	up to second derivatives
 	and other known data, but neither on $\sup_{M}|\nabla u|$ nor on $(\inf_M\phi)^{-1}$. % (but not on $\sup_{  M}|\nabla u|$).
 	%Moreover, when the boundary data $\varphi$ is exactly a constant then the $C$ depends only on $\partial M$  up to second derivatives, $|\underline{u}|_{C^{2}(\bar M)}$,  $\sup_{M}|\nabla\psi|$ and other known data. 
 	
 	%In addition, $C$ does not depend on $(\inf_M\phi)^{-1}$.
 \end{proposition}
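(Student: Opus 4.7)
The strategy mirrors that of Proposition \ref{mix-general}, but exploits the two hypotheses to avoid invoking $|\varphi|_{C^3(\bar M)}$ or third derivatives of $\partial M$. Choose holomorphically flat coordinates \eqref{holomorphic-coordinate-flat} centered at $x_0\in\partial M$ so that the tangential operator is $\mathcal{T}=D$ as in \eqref{tangential-oper-Leviflat1}, i.e.\ $\gamma=0$ in the notation of \eqref{tangential-operator123}. Since $\varphi\equiv\mathrm{const}$ and $\partial M$ is locally the hyperplane $\mathfrak{Re}(z_n)=0$, one has $u\equiv\mathrm{const}$ on $\partial M\cap\bar\Omega_\delta$, hence $\mathcal{T}u|_{\partial M}=0$ and $\widetilde{u}_\tau|_{\partial M}=0$ for $\tau<n$. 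In particular the quadratic correction in \eqref{Phi-def1} vanishes and $\Phi$ collapses to $\Phi=\pm Du$.

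First I would invoke the refined form of Lemma \ref{yuan-key0} already noted there: with $\gamma=0$ and $\varphi$ constant, the Cauchy-Schwarz absorption of the cross term $F^{i\bar j}(\widetilde{\eta})_i(u_{x_n})_{\bar j}$ is not required, and the bound
\[
\mathcal{L}\Phi \;\ge\; -C_\Phi\sqrt{b_1}\sum_{i=1}^n f_i \;-\; C_\Phi\sum_{i=1}^n f_i|\lambda_i| \;-\; C_\Phi|\nabla\psi|
\]
holds with $C_\Phi$ depending only on $|\chi|_{C^1(\bar M)}$ and $\partial M$ up to second derivatives, via the commutator identities preceding \eqref{linear-1} applied to $u_{t_\alpha}=Du$. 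Next I would feed this $\Phi$ into the same barrier
\[
\widetilde{\Psi} \;=\; A_1\sqrt{b_1}\,v \;-\; A_2\sqrt{b_1}\,|z|^2 \;+\; \frac{1}{\sqrt{b_1}}\sum_{\tau<n}|(u-\varphi)_\tau|^2 \;+\; A_3\Phi
\]
from \eqref{Psi}, with $v$ as in \eqref{barrier1}. The estimates \eqref{bdy1} needed to control $\mathcal{L}\sigma$ require only $\sigma\in C^2$, and the inequality $\widetilde{\Psi}\le 0$ on $\partial\Omega_\delta$ follows from $|Du|\le C|z|$ and $|\widetilde{u}_\tau|\le C|z|$ on $\partial M\cap\bar\Omega_\delta$, both of which are immediate from $u\equiv\mathrm{const}$ on the flat piece of the boundary.

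With the main inequality \eqref{bdy-main-inequality} in hand, I would split into the two cases of Lemma \ref{type-lemma1} exactly as in the proof of Proposition \ref{mix-general}: in Case 1, \eqref{guan-key1} gives the concavity gain and \eqref{sumfi-3} absorbs $|\nabla\psi|$ via $|\phi^{1/n}|_{C^1(\bar M)}$; in Case 2, \eqref{2nd-case1} yields the uniform lower bound \eqref{bbvvv} on $F^{i\bar j}\sigma_i\sigma_{\bar j}$, and the residual $|\nabla\psi|$ term is dominated using \eqref{sumf_i1}. Choosing $A_2\gg A_3\gg 1$ and then $A_1 N\gg 1$ produces $\mathcal{L}\widetilde{\Psi}\ge 0$ on $\Omega_\delta$ together with $\widetilde{\Psi}\le 0$ on $\partial\Omega_\delta$ and $\widetilde{\Psi}(x_0)=0$; the maximum principle delivers $(\nabla_\nu\widetilde{\Psi})(x_0)\le 0$, and this inequality is exactly the claimed bound on $|\mathfrak{\tilde{g}}(\xi_\alpha,J\bar\xi_n)(x_0)|$.

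\textbf{Main obstacle.} The conceptual reduction is clean; the delicate part is the bookkeeping verifying that the only places where $|\varphi|_{C^3(\bar M)}$ and third derivatives of $\partial M$ entered the proof of Proposition \ref{mix-general} were (i) the commutators arising from the $\widetilde{\eta}\nabla_{\partial/\partial x_n}$ piece of $\mathcal{T}$, which disappear when $\gamma=0$, and (ii) the boundary data $\mathcal{T}\varphi$ and its derivatives, which vanish when $\varphi$ is constant. One must also check that dropping the quadratic correction $\gamma b_1^{-1/2}(u_{y_n}-\varphi_{y_n})^2$ does not destroy the Cauchy-Schwarz balancing used to control $\sum f_i u_{y_n i}u_{y_n\bar j}$-type terms coming from $\mathcal{L}(\pm\mathcal{T}u)$; in the $\gamma=0$ case these terms simply do not appear, so the balance is automatic.
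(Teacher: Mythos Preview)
Your proposal is correct and follows exactly the approach the paper intends: the paper does not give a separate proof of Proposition \ref{mix-Leviflat} but signals via the last sentence of Lemma \ref{yuan-key0} and the discussion around \eqref{tangential-oper-Leviflat1}--\eqref{tangential-operator123} that one reruns the proof of Proposition \ref{mix-general} with $\gamma=0$ and $\varphi$ constant, which is precisely what you do. Your bookkeeping of where $|\varphi|_{C^3}$ and third derivatives of $\partial M$ enter is accurate, and your observation that the quadratic correction $b_1^{-1/2}(u_{y_n}-\varphi_{y_n})^2$ is unnecessary because the $F^{i\bar j}u_{y_n i}u_{y_n\bar j}$ cross term never arises when $\gamma=0$ is the essential point.
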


 \subsection{Further results under appropriate conditions on $\phi^{{1}/{(n-1)}}$}
 
 As in \eqref{tildephi-1} we denote $\tilde{\phi}=\phi^{1/(n-1)}$.
 If $0\leq \tilde{\phi}\in C^{1,1}(\bar M)$ satisfies \eqref{keykey-phi}, then 
 \begin{equation}
 	\begin{aligned} 
 		\nabla\tilde{\phi}=0 \mbox{ at points $p\in\bar M$ where } \tilde{\phi}=0.
 	\end{aligned}
 \end{equation}
This implies that there exists a uniform positive constant $C$ such that
%\eqref{sqrt-condition1}.
 \begin{equation}
 	\label{sqrt-condition1}
 	\begin{aligned}
 		% |\nabla\phi^{{1}/{(n-1)}}|\leq C\phi^{{1}/{2(n-1)}}   \mbox{ in }\bar M.
 		|\nabla\tilde{\phi}|\leq C\sqrt{\tilde{\phi}}   \mbox{ in }\bar M.
 	\end{aligned}
 \end{equation}

 %In this subsection we prove the second estimates %when $\phi^{1/(n-1)}\in C^{2}(\bar M)$ satisfies
 %under such an assumption.

 \subsubsection{Stability of condition \eqref{sqrt-condition1}}

 Fix  $0<\epsilon<1$. Suppose
 $\tilde{\phi}_\epsilon\in C^2(\bar M)$ and it satisfies
 %$ |\tilde{\phi}_\epsilon-(\tilde{\phi}+\epsilon)|_{C^{1,1}(\bar M)}\leq \frac{\epsilon}{2}.$
 \begin{equation} \begin{aligned}
 		|\tilde{\phi}_\epsilon-(\tilde{\phi}+\epsilon)|_{C^{1,1}(\bar M)}\leq \frac{\epsilon}{2}. \nonumber
 \end{aligned} \end{equation}
 Then 
 \begin{equation}
 	\begin{aligned}
 		\tilde{\phi}+\frac{\epsilon}{2}\leq \tilde{\phi}_\epsilon \leq \tilde{\phi}+\frac{3\epsilon}{2},   \nonumber
 	\end{aligned}
 \end{equation}
 \begin{equation}
 	\begin{aligned}
 		| \nabla\tilde{\phi}_\epsilon| \leq |\nabla\tilde{\phi}|+\frac{\epsilon}{2} \leq C\sqrt{\tilde{\phi}}+\frac{\epsilon}{2}\leq
 		(1+C)\sqrt{\tilde{\phi}+\frac{\epsilon}{2}}
 		\leq (1+C)\sqrt{\tilde{\phi}_\epsilon}. \nonumber
 	\end{aligned}
 \end{equation}

 \subsubsection{Quantitative boundary estimate revisited}
 
 \begin{proposition}
 	\label{mix-Leviflat--2}
 	%We assume  all the above assumption on %$M$ and
 	Let $0<\tilde{\phi}=\phi^{{1}/{(n-1)}}\in C^{1}(\bar M)$ satisfy \eqref{keykey-phi}. Suppose the other assumptions in Proposition \ref{mix-general} hold.
 	Then we have the quantitative boundary estimates \eqref{quanti-mix-derivative-00}.  
	In addition  \eqref{keykey-phi} can be removed when $M=X\times S$.
 \end{proposition}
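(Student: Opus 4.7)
My plan is to revisit the proof of Proposition \ref{mix-general} and identify the unique place where $\phi^{1/n}\in C^1(\bar M)$ was needed, namely the absorption of $-(C_1'+A_3C_\Phi)|\nabla\psi|$ on the right-hand side of \eqref{bdy-main-inequality}. All the other ingredients --- the barrier $\widetilde\Psi$ of \eqref{Psi}, the two-case ellipticity dichotomy of Lemma \ref{type-lemma1}, the positivity estimate $\mathcal L\widetilde\Psi\geq 0$, and the final comparison principle --- make no direct use of $\phi^{1/n}$. So it will suffice to show that, under the weaker assumption $\tilde\phi=\phi^{1/(n-1)}\in C^1(\bar M)$ with \eqref{keykey-phi}, the term $|\nabla\psi|$ can still be dominated by $\sum_i f_i$ with a constant independent of $(\inf_M\phi)^{-1}$.

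The pointwise inequality I will use is assembled in two steps. The consequence \eqref{sqrt-condition1} of \eqref{keykey-phi} gives $|\nabla\tilde\phi|\leq C\sqrt{\tilde\phi}$, whence $|\nabla\psi|=(n-1)|\nabla\tilde\phi|/\tilde\phi\leq C\tilde\phi^{-1/2}$. Independently, AM--GM applied to the positive reciprocals $1/\mu_i$ yields, wherever $f(\lambda(\tilde{\mathfrak g}[u]))=\psi$,
\[\sum_{i=1}^n f_i=(n-1)\sum_{i=1}^n\frac{1}{\mu_i}\geq n(n-1)\left(\prod_{i=1}^n\frac{1}{\mu_i}\right)^{\!1/n}=n(n-1)e^{-\psi/n}=n\,\tilde\phi^{-(n-1)/n},\]
where the last equality uses $\psi=(n-1)\log\tilde\phi+n\log(n-1)$. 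Since $\bar M$ is compact and $(n-1)/n\geq 1/2$ for $n\geq 2$, one has $\tilde\phi^{-1/2}\leq C\tilde\phi^{-(n-1)/n}\leq C'\sum_i f_i$, and hence $|\nabla\psi|\leq C''\sum_i f_i$. Plugging this into \eqref{bdy-main-inequality}, the bad term is directly absorbed into the $\sqrt{b_1}\sum_i f_i$ coefficient already present, with no separation of the two cases of Lemma \ref{type-lemma1} required; choosing $A_1\gg 1$ (or $A_1N\gg 1$ in Case 2) then gives $\mathcal L\widetilde\Psi\geq 0$ on $\Omega_\delta$ and delivers \eqref{quanti-mix-derivative-00}.

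For the additional claim on $M=X\times S$, I will apply the holomorphically flat analogue Proposition \ref{mix-Leviflat}, whose hypotheses are fulfilled because $\partial M=X\times\partial S$ is locally $\{\mathrm{Re}(z_n)=0\}$ and the tangential operator is correction-free, $\mathcal T\in\{\partial/\partial x_\alpha,\,\partial/\partial y_\alpha\}$ with $\gamma=0$. In this setting only tangential $X$-partials of $\tilde\phi$ enter the bad term $\mathcal T\psi$, and the normal-direction condition \eqref{keykey-phi} is vacuous for them; the AM--GM step above applied to these tangential derivatives, combined with the extra ellipticity in the $X$-directions supplied by the subsolution $\underline u=\varphi+th$ with $t\gg 1$ constructed in the introduction, closes the estimate. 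The main obstacle throughout is the pointwise bound of the second paragraph: identifying $\tilde\phi^{-(n-1)/n}$ rather than the naive $\tilde\phi^{-1/n}$ as the correct cushion supplied by AM--GM is exactly what enables the weakening from $\phi^{1/n}\in C^1(\bar M)$ to $\tilde\phi=\phi^{1/(n-1)}\in C^1(\bar M)$.
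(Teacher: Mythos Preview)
Your argument for the first assertion is correct and coincides with the paper's: both derive $|\nabla\psi|\leq C\tilde\phi^{-1/2}$ from \eqref{sqrt-condition1}, then absorb this via the AM--GM bound \eqref{sumf_i1}, namely $\sum_i f_i\geq n(n-1)e^{-\psi/n}=n\phi^{-1/n}$, using that $\phi^{-1/(2(n-1))}\leq C\phi^{-1/n}$ on the compact $\bar M$.

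For the product case $M=X\times S$, your core observation is also right and matches the paper: because $\gamma=0$ the barrier involves only $\Phi=\pm\mathcal T(u-\varphi)$ with $\mathcal T\in\{\partial_{x_\alpha},\partial_{y_\alpha}:\alpha<n\}$, so the bad term reduces to $\sum_{\tau<n}|\psi_\tau|$; since $X$ is closed, the square-root bound $|\nabla_\xi\tilde\phi|\leq C\sqrt{\tilde\phi}$ for $\xi\in T^{1,0}_X$ holds automatically without \eqref{keykey-phi} (this is the paper's \eqref{sqrt-condition2}), and the same AM--GM absorption finishes. Two imprecisions, however: you cannot literally ``apply Proposition \ref{mix-Leviflat}, whose hypotheses are fulfilled'', since that proposition assumes $\phi^{1/n}\in C^1(\bar M)$ and $\varphi$ constant, neither of which is given here---you must rerun the barrier argument with the modified $\psi$-absorption, as the paper does. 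And the ``extra ellipticity in the $X$-directions supplied by the subsolution $\underline u=\varphi+th$'' plays no role in this estimate; the paper uses only the generic subsolution hypothesis inherited from Proposition \ref{mix-general}, and the specific construction $\varphi+th$ is irrelevant at this stage.
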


 \begin{proof}[Sketch of proof]
 	Together with \eqref{sumf_i1}, \eqref{sqrt-condition1} implies   
 	\begin{equation}
 		\label{j1}
 		\begin{aligned}
 			|\nabla\psi|=\frac{(n-1)|\nabla\phi^{1/(n-1)}|}{\phi^{{1}/{(n-1)}}} \leq \frac{C}{\phi^{1/2(n-1)}} \leq 
 			\frac{C}{\phi^{1/n}} \leq 
 			%\frac{(n-1)^2C}{n}\sum_{i=1}^n \frac{1}{\mu_i}=
 			\frac{C}{n}\sum_{i=1}^n f_i.  \nonumber
 		\end{aligned}
 	\end{equation}
 	
 	When $M=X\times S$ we always have the following inequality even if \eqref{keykey-phi} does not hold,
 	\begin{equation}
 		\label{sqrt-condition2}
 		\begin{aligned}
 			|\nabla_\xi\phi^{\frac{1}{n-1}}|\leq C\phi^{{1}/{2(n-1)}}
 		\end{aligned}
 	\end{equation}
 	where $\xi\in T^{1,0}_X$ and $|\xi|=1$.
 	On the other hand, 
 	the tangential operator is 
 	$$\mathcal{T}= \frac{\partial}{\partial z_{\alpha}}, \mbox{ }  \frac{\partial}{\partial \bar z_{\alpha}}, \mbox{ } 1\leq\alpha\leq n-1
 	%\frac{\partial}{\partial x_{\alpha}}, \mbox{  }\pm \frac{\partial}{\partial y_{\alpha}},
 	$$ where  $z'=(z_1,\cdots z_{n-1})$ is local holomorphic coordinate of $X$. And
 	the barrier function is 
 	\begin{equation}
 		\label{Psi2}
 		\begin{aligned} 
 			\widetilde{\Psi} =A_1 \sqrt{b_1}v -A_2 \sqrt{b_1} |z|^2 + \frac{1}{\sqrt{b_1}} \sum_{\tau<n}|(u-\varphi)_{\tau}|^2+ A_3 \pm\mathcal{T}(u-\varphi). \nonumber %\mbox{ in } \Omega_\delta.
 		\end{aligned}
 	\end{equation}
 	Similar to \eqref{bdy-main-inequality} we obtain
 	\begin{equation}
 		\begin{aligned}
 			\mathcal{L}\widetilde{\Psi} \geq \,&
 			A_1 \sqrt{b_1} \sum_{i=1}^n f_{i}(\underline{\lambda}_i-\lambda_i)
 			+\frac{1}{8\sqrt{b_1}} \sum_{i\neq r} f_i\lambda_i^2 
 			\\\,&
 			+ 2A_1N \sqrt{b_1} F^{i\bar j}\sigma_i \sigma_{\bar j}
 			-C \sqrt{b_1}\sum_{i=1}^n f_i-C\sum_{\tau=1}^{n-1}|\psi_\tau|. \nonumber
 			%-\sup_{\bar M}|\underline{\lambda}|(C_1'+A_3C_\Phi) b_1^{\frac{s}{2}} \sum_{i=1}^n f_i.
 		\end{aligned}
 	\end{equation}
 	This completes the proof by using \eqref{sqrt-condition2}.

 \end{proof}

 \subsubsection{Global second estimate revisited}
 
One can check Sz\'ekelyhidi-Tosatti-Weinkove's proof of second estimate works under such weaker assumptions
 on $\phi^{1/(n-1)}$.

 \begin{proposition}
 	\label{remark2.4}
 	
 	Let $0<\tilde{\phi}=\phi^{{1}/{(n-1)}}\in C^2(\bar M)$ satisfy \eqref{sqrt-condition1}, we assume that there is a $(n-1)$-PSH function 
 	$\underline{u}\in C^2(\bar M)$. Then there is a positive constant $C$ depending on 
 	$|\phi^{{1}/{(n-1)}}|_{C^2(\bar M)}$, $(\inf_{M}\min_i\underline{\mu}_i)^{-1}$, $|\underline{u}|_{C^2(\bar M)}$ and other known data (but neither on $(\inf_{M}\phi)^{-1}$ nor on $\sup_M|\nabla u|$)
 	such that  \eqref{sec-estimate-quar1} and \eqref{STW-estimate} hold. 
 	%This applies to degenerate equation with $\phi\geq0$.
 \end{proposition}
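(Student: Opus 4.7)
The plan is to revisit the Sz\'ekelyhidi--Tosatti--Weinkove argument in \cite{STW17} verbatim, and to check that the only places where a uniform bound on $(\inf_M\phi)^{-1}$, $|\nabla\psi|$, or $|\nabla^2\psi|$ would naively be needed can in fact be absorbed by the positive terms already present in their computation, once \eqref{sqrt-condition1} is used in conjunction with \eqref{sumf_i1}. Since the proof of \eqref{STW-estimate} on a manifold with boundary is a direct adaptation of the closed-manifold argument (the maximum is either interior, in which case the closed-manifold computation applies, or on the boundary, producing the term $\sup_{\partial M}|\Delta u|$), it suffices to focus on the interior maximum principle computation.

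Following \cite{STW17}, one applies $\mathcal{L}$ to a test function of the form
$Q = \log\lambda_1(\tilde{\mathfrak g}[u]) + h(|\nabla u|^2) + A u$ with $h$ and $A$ chosen appropriately, and differentiates the equation $f(\lambda(\tilde{\mathfrak g})) = \psi$ once and twice. The outcome of the STW computation is an inequality of the schematic shape
\begin{equation*}
0 \;\geq\; \mathcal{L}Q \;\geq\; \varepsilon\lambda_1 \sum_{i=1}^n f_i \;+\; \text{(good positive terms)} \;-\; C|\nabla\psi|\,\sqrt{\lambda_1} \;-\; C|\nabla^2\psi| \;-\; C\sum_i f_i,
\end{equation*}
where the $|\nabla\psi|$ term comes from the first derivative of the equation and the $|\nabla^2\psi|$ term from the second derivative. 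In the K\"ahler/balanced setting of \cite{STW17} these terms are harmless because $\psi\in C^2(\bar M)$; here we must replace that assumption by \eqref{sqrt-condition1}.

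First I would bound $|\nabla\psi|$ and $|\nabla^2\psi|$ in terms of $\tilde{\phi}$. Writing $\psi = (n-1)\log\tilde{\phi} + \text{const}$, one has
\begin{equation*}
|\nabla\psi| \;\leq\; \frac{(n-1)|\nabla\tilde{\phi}|}{\tilde{\phi}} \;\leq\; \frac{C}{\sqrt{\tilde{\phi}}}, \qquad |\nabla^2\psi| \;\leq\; \frac{(n-1)|\nabla^2\tilde{\phi}|}{\tilde{\phi}} + \frac{(n-1)|\nabla\tilde{\phi}|^2}{\tilde{\phi}^2} \;\leq\; \frac{C}{\tilde{\phi}},
\end{equation*}
using \eqref{sqrt-condition1} and $\tilde{\phi}\in C^2(\bar M)$. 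Second, I would use \eqref{sumf_i1}: since $\psi = \log\phi+n\log(n-1)$,
\begin{equation*}
\sum_{i=1}^n f_i \;\geq\; n(n-1)\,e^{-\psi/n} \;=\; n\,\phi^{-1/n} \;=\; n\,\tilde{\phi}^{-(n-1)/n}.
\end{equation*}
Comparing exponents, the inequality $\tilde{\phi}^{-1/2}\leq C\tilde{\phi}^{-(n-1)/n}$ reduces to $(n-1)/n - 1/2 = (n-2)/(2n)\geq 0$, which holds for all $n\geq 2$; similarly $\tilde{\phi}^{-1}\leq C\tilde{\phi}^{-2(n-1)/n}$ reduces to $(n-2)/n\geq 0$. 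Since $\tilde{\phi}$ is uniformly bounded above, this yields
\begin{equation*}
|\nabla\psi| \;\leq\; C\sum_{i=1}^n f_i, \qquad |\nabla^2\psi| \;\leq\; C\Bigl(\sum_{i=1}^n f_i\Bigr)^2.
\end{equation*}

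With these two pointwise bounds in hand, the STW computation goes through unchanged: the term $C|\nabla\psi|\sqrt{\lambda_1}$ is absorbed into $\varepsilon\lambda_1 \sum f_i$ via Cauchy--Schwarz, and the term $C|\nabla^2\psi|$ is controlled by the quadratic-in-$\sum f_i$ good terms produced by the $h(|\nabla u|^2)$ factor together with Lemma \ref{type-lemma1}, exactly as in \cite[Section~3]{STW17}. The step I expect to be most delicate is the bookkeeping in the second-derivative term: one must verify that STW's positive contribution genuinely dominates $(\sum f_i)^2$ rather than merely $\sum f_i$, which in turn uses the concavity of $f$ on $\mathcal{P}_{n-1}$ and the Lemma \ref{type-lemma1} dichotomy (splitting into $\min_j\mu_j\leq\varepsilon/(2n)$ and its complement, exactly as in Section \ref{sec4}). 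Once this is done, the resulting inequality at the interior maximum of $Q$ yields \eqref{sec-estimate-quar1}, and combined with Theorem \ref{STW-estimate1} also \eqref{STW-estimate}, with constant depending only on $|\tilde{\phi}|_{C^2(\bar M)}$, $(\inf_M\min_i\underline{\mu}_i)^{-1}$, $|\underline{u}|_{C^2(\bar M)}$ and background data, and in particular independent of $(\inf_M\phi)^{-1}$ and $\sup_M|\nabla u|$.
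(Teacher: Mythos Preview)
Your proposal has a genuine gap in the handling of the second-derivative term $-\psi_{k\bar k}$. The bound $|\nabla^2\psi|\leq C(\sum_i f_i)^2$ that you obtain from \eqref{sumf_i1} is correct but not usable: the STW maximum-principle computation produces good terms that are only \emph{linear} in $\sum_i f_i$ (of the form $\varepsilon\lambda_1\sum_i f_i$, or $c\sum_i f_i$ via the subsolution through Lemma~\ref{type-lemma1}), never quadratic. You yourself flag this (``one must verify that STW's positive contribution genuinely dominates $(\sum f_i)^2$ rather than merely $\sum f_i$''), and indeed the verification fails: when $\min_j\mu_j$ is tiny, $\sum_i f_i=(n-1)\sum_i 1/\mu_i$ can be arbitrarily large compared with $\lambda_1$, so neither $\lambda_1\sum_i f_i$ nor the subsolution term $c\sum_i f_i$ can absorb $C(\sum_i f_i)^2$.

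The paper's remedy is to replace \eqref{sumf_i1} by the generalized Newton--Maclaurin inequality
\[
\sum_{i=1}^n\frac{1}{\mu_i}\;\geq\; n^{\frac{n-2}{n-1}}\Bigl(\frac{\sum_i\mu_i}{\prod_i\mu_i}\Bigr)^{1/(n-1)},
\]
which gives the sharper lower bound $\sum_i f_i\geq c\,(\sum_i\lambda_i)^{1/(n-1)}\phi^{-1/(n-1)}$. Since the direct computation (using $\tilde\phi\in C^2$ and \eqref{sqrt-condition1}) yields $|\psi_k|,\,-\psi_{k\bar k}\leq C\phi^{-1/(n-1)}$, one obtains
\[
|\psi_k|,\;-\psi_{k\bar k}\;\leq\; C\,(\textstyle\sum_i\lambda_i)^{-1/(n-1)}\sum_i f_i,
\]
which is \emph{linear} in $\sum_i f_i$, with a coefficient that tends to zero as $\lambda_1\to\infty$ (note $\sum_i\lambda_i>\lambda_1$ because $\mu_1>0$). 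This drops straight into STW's argument with no further changes. Your treatment of $|\nabla\psi|$ is fine; the missing idea is precisely this Newton--Maclaurin upgrade for the Hessian term.
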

 
 \begin{proof}[Sketch of proof]
 	Let  $\mu_i=\sum_{j\neq i}\lambda_j$, $\psi=\log\phi+n\log(n-1)$.  
 	Generalized Newton-Maclaurin inequality gives
 	%\begin{lemma}
 	\begin{equation}
 		\label{Newton-Maclaurin1}
 		\begin{aligned}
 			\sum_{i=1}^n \frac{1}{\mu_i} \geq n^{\frac{n-2}{n-1}}\left(\frac{\sum_{i=1}^n \mu_i}{\mu_1\cdots\mu_n}\right)^{1/(n-1)}. %\nonumber
 		\end{aligned}
 	\end{equation}

 	%\end{lemma}
 	
 	By straightforward computations we get
 	%\begin{equation} \begin{aligned}
 	%(\log\phi)_i=\,& (n-1)\phi^{-1/(n-1)} (\phi^{1/(n-1)})_i, \\ (\log\phi)_{\bar i}=\,& (n-1)\phi^{-1/(n-1)}(\phi^{1/(n-1)})_i, \nonumber
 	% \end{aligned} \end{equation} 
 	\begin{equation}
 		\begin{aligned}
 			(\log\phi)_{i\bar j}=\frac{\phi_{i\bar j}}{\phi}-\frac{\phi_i\phi_{\bar j}}{\phi^2},  \nonumber
 		\end{aligned}
 	\end{equation} 
 	\begin{equation}
 		\begin{aligned}
 			(\phi^{1/(n-1)})_{i\bar j}=
 			\frac{1}{n-1} \phi^{1/(n-1)} \left(\frac{\phi_{i\bar j}}{\phi}-\frac{n-2}{n-1} \frac{\phi_i \phi_{\bar j}}{\phi^2} \right). \nonumber
 		\end{aligned}
 	\end{equation}
 	
 	Combining with \eqref{Newton-Maclaurin1} and concavity of $f$,
 	%\begin{lemma}\label{key-phi}
 	for $$\sum_{i=1}^n\log\mu_i=\log\phi+n\log(n-1),$$
 %where $\mu_i=\sum_{j\neq i}\lambda_j$, $\psi=\log\phi+n\log(n-1)$,  
 	we obtain
 	\begin{equation}
 		\begin{aligned}
 			\sum_{i=1}^n f_i\geq n^{\frac{n-2}{n-1}}
 			\left(\sum_{i=1}^n\lambda_i\right)^{\frac{1}{n-1}}\phi^{-\frac{1}{n-1}},
 		\end{aligned}
 	\end{equation}
 	\begin{equation}
 		\label{key1-phi}
 		\begin{aligned}
 			|\psi_k|=|(\log\phi)_k|=(n-1)\phi^{-1/(n-1)}|(\phi^{1/(n-1)})_k|,
 			%\leq \frac{(n-1)^{\frac{2n-1}{n-1}}n^{\frac{2-n}{n-1}}}{(\sum_{i=1}^n\mu_i)^{\frac{1}{n-1}}}|\partial_k  (\phi^{1/(n-1)})|\sum_{i=1}^n \frac{1}{\mu_i},
 			%\leq \frac{(n-1)n^{\frac{2-n}{n-1}}}{(\sum_{i=1}^n\lambda_i)^{\frac{1}{n-1}}}|(\phi^{{1}/{(n-1)}})_k|\sum_{i=1}^n f_i, \nonumber
 		\end{aligned}
 	\end{equation}
 	\begin{equation}
 		\label{key2-phi}
 		\begin{aligned}
 			\psi_{k\bar k}=(\log\phi)_{k\bar k} %=\,&\left(\frac{\phi_{k\bar k}}{\phi}-\frac{n-2}{n-1}\frac{|\phi_k|^2}{\phi^2}\right)-\frac{1}{n-1}\frac{|\phi_k|^2}{\phi^2}	\\
 			=
 			(n-1)\phi^{-\frac{1}{n-1}}(\phi^{{1}/{(n-1)}})_{k\bar k}-\frac{1}{n-1}\frac{|\phi_k|^2}{\phi^2}.
 		\end{aligned}
 	\end{equation}
 	% \begin{equation} \label{key3-phi} \begin{aligned}
 	%\,&\sum_{i=1}^n \mu_i \geq n\phi^{{1}/{n}}, \,& \sum_{i=1}^n \lambda_i\geq \frac{n\phi^{{1}/{n}}}{n-1}.
 	% \end{aligned} \end{equation}
 	% \end{lemma}
 	By \eqref{sqrt-condition1} again, 
 	\begin{equation}
 		\label{nnn}
 		\begin{aligned}
 			\frac{|\phi_k|^2}{\phi^2} \leq \frac{|\nabla\phi|^2}{\phi^2}=(n-1)^2\frac{|\nabla\phi^{{1}/{(n-1)}}|^2}{\phi^{\frac{2}{n-1}}}\leq C\phi^{-\frac{1}{n-1}}.
 		\end{aligned}
 	\end{equation}
 	Thus 
 	\begin{equation} \label{key3-phi} \begin{aligned}
 			|\psi_k|,   -\psi_{k\bar k}  \leq C\left(\sum_{i=1}^n \lambda_i\right)^{-\frac{1}{n-1}}\sum_{i=1}^n f_i. \nonumber
 	\end{aligned} \end{equation}
 	% provided $\phi^{{1}/{(n-1)}}\in C^2(\bar M)$.
 	
With those conditions at hand,  
one can prove the second order 
 	estimates %\eqref{sec-estimate-quar1} and 
 	\eqref{STW-estimate}, following the original proof of Sz\`ekelyhidi-Tosatti-Weinkove almost words by words. 
 	%The proof almost follows the original proof of Sz\`ekelyhidi-Tosatti-Weinkove words by words.
 	%in which the bound for second derivatives is bounded from above by a uniform positive constant depending on 
 	%$\sup_M|\nabla\psi|$ and $\sup_{\xi\in T^{1,0}_{M}, |\xi=1|}\psi_{\xi\bar\xi}$.
 	
 	%Notice the constant $\varepsilon_1$ from Lemma \ref{guan-type-lemma1} is independent of $\inf_M\phi$, so that the obtained second estimate applies to degenerate equations.
 	
 \end{proof}

\begin{remark}
When $\partial M=\emptyset$, each $0\leq \tilde{\phi}=\phi^{1/(n-1)}\in C^{2}(M)$ satisfies  \eqref{sqrt-condition1}. Thus one can improve Sz\'ekelyhidi-Tosatti-Weinkove's second estimate on closed Hermitian manifolds. %We wish to investigate  
%The related questions will be investigated in future work.
\end{remark}

 \section{The Dirichlet problem  with less regular boundary and boundary data} 
 \label{sec5}

 The purpose of this section is to investigate the equations on complex manifolds with  less regular boundary.\renewcommand{\thefootnote}{\fnsymbol{footnote}}\footnote{We emphasize that   the geometric quantities of $(M,\omega)$ (the curvature $R_{i\bar j k\bar l}$ and  torsion $T^k_{ij}$) keep bounded as approximating to $\partial M$, and all derivatives of ${\chi}_{i\bar j}$ has continues extensions to $\bar M$,
 	whenever $M$ has less regularity boundary. 
 	%Typical examples are as follows: $M\subset M'$, 	$\mathrm{dim}_{\mathbb{C}}M'=n$, 	$\omega=\omega_{M'}|_{M}$ and the given data ${\chi}$ can be smoothly defined on $M'$.
 }
 %we need to seek those manifolds which allow us to apply Theorem \ref{thm1-bdy} or \ref{thm2-bdy-leviflat} to do the submanifolds/domains approximation. 
 %Clearly, there are two types of complex manifolds that fulfill the request. That is $M=X\times S$ (or more generally $M$ is as mentioned in Theorem \ref{dege-thm-c2alpha} below), or the manifold whose boundary satisfies 

 We first state an observation. %which also allows us to construct subsolutions. 
 \begin{lemma}
 	\label{obser-subsolution}
 	Let $Z = \frac{1}{(n-1)!}*\mathfrak{Re}(\sqrt{-1}\partial u\wedge \overline{\partial}(\omega^{n-2}))$ be as in \eqref{MA-n-1}.
 	For any $C^1$-smooth function $v$ on $\bar S$,
 	\begin{equation}
 		\begin{aligned}
 			Z[v]=0.  \nonumber
 		\end{aligned}
 	\end{equation}
 \end{lemma}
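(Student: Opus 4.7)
The plan is a direct bidegree computation that exploits two structural facts: $v$ depends only on the $S$-variable, and $\dim_{\mathbb{C}} S = 1$. First I would write $v = \pi_2^{*}\tilde v$ for $\tilde v \in C^{1}(\bar S)$, so that $\partial v = \pi_{2}^{*}\partial_{S}\tilde v$ is a $(1,0)$-form pulled back from $S$ alone.

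Second, I would expand $\omega^{n-2}$ using the binomial formula. Because $(\pi_{2}^{*}\omega_{S})^{2}=0$ on the $1$-dimensional $S$, all but two terms drop out, leaving
$$\omega^{n-2} \;=\; \pi_{1}^{*}(\omega_{X}^{n-2}) \;+\; (n-2)\,\pi_{1}^{*}(\omega_{X}^{n-3})\wedge \pi_{2}^{*}\omega_{S}.$$
Applying $\bar\partial$, the factor $\pi_{2}^{*}\omega_{S}$ is $\bar\partial$-closed since $\omega_{S}$ is a top-degree $(1,1)$-form on $S$, so
$$\bar\partial\,\omega^{n-2} \;=\; \pi_{1}^{*}(\bar\partial\,\omega_{X}^{n-2}) \;+\; (n-2)\,\pi_{1}^{*}(\bar\partial\,\omega_{X}^{n-3})\wedge \pi_{2}^{*}\omega_{S}.$$

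Third, I would invoke two dimension/type cancellations. The balanced hypothesis on $\omega_{X}$ means $d\,\omega_{X}^{n-2}=0$ on the $(n-1)$-dimensional $X$; splitting by bidegree, $\bar\partial\,\omega_{X}^{n-2}=0$, so the first summand above vanishes. For the remaining term, I wedge with $\partial v$ and observe that $\partial v\wedge \pi_{2}^{*}\omega_{S}$ is the pullback of a section of $\Lambda^{2,1}T^{*}S$, which is the zero bundle because $\dim_{\mathbb{C}}S=1$. Hence $\partial v\wedge \bar\partial\,\omega^{n-2}=0$ identically on $M$, and $Z[v]=0$ follows immediately from the definition.

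There is no genuine analytic obstacle; the argument is really about keeping track of bidegrees on each factor of the product. The one place where one must be careful is in correctly extracting $\bar\partial\,\omega_{X}^{n-2}=0$ from the balanced identity $d\,\omega_{X}^{n-2}=0$, which is the only geometric input beyond pure dimension counting.
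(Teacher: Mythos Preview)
Your proof is correct and follows essentially the same approach as the paper's own proof. The paper records the argument in two lines---``$\overline{\partial}\omega_X^{n-2}=0$'' (balanced) and ``$\partial v\wedge \omega_S=0$'' (dimension)---while you unpack the binomial expansion of $\omega^{n-2}$ and the bidegree reasoning in full; the logical content is identical.
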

 \begin{proof}
 	Note that $\omega_X$ is balanced, and $v$ is a function on $\bar S$, we see
 	$ \overline{\partial}\omega_X^{n-2}=0$ and $\partial v\wedge \omega_S=0$; thus
 	$\partial v\wedge \overline{\partial}\omega^{n-2}=0$.
 	% \begin{equation} \begin{aligned}
 	%\partial w\wedge \overline{\partial}\omega^{n-2}
 	%=\partial w\wedge \overline{\partial}\omega_X^{n-2} + (n-2)\partial w\wedge \overline{\partial}(\omega_X^{n-3})\wedge \omega_S =0. \nonumber
 	% \end{aligned} \end{equation}
 \end{proof}
 
 %\subsection{Proof of Theorem \ref{thm2-volumeform}}
 %\subsection{The Dirichlet problem with homogeneous boundary data}

 A somewhat remarkable fact to us is that the regularity assumptions on boundary and boundary data can be further weakened under certain assumptions. %which extends extensively a result of \cite{yuan2017} to general settings.
 The motivation is mainly based on the estimates which state that,
 %\begin{itemize}
 % \item 
 if $\partial M$ is  {holomorphically flat}  and  boundary value is a constant, 
 then the constant in quantitative boundary estimate \eqref{bdy-sec-estimate-quar1}  
 depends only on $\partial M$ up to second derivatives and other known data (see Propositions \ref{mix-Leviflat} or \ref{mix-Leviflat--2} and \ref{proposition-quar-yuan2}). 
 Besides, we can use a result due to Silvestre-Sirakov \cite{Silvestre2014Sirakov} to derive
 the $C^{2,\alpha}$ boundary regularity with only assuming $C^{2,\beta}$ boundary.

 As consequences, we can prove Theorems \ref{thm2-volumeform}-\ref{thm2-volumeform-2} and the following theorem.
 \begin{theorem}
 	\label{thm3-volumeform}
 	
 	Let $(M,J,\omega)$ be a product as  in \eqref{product-1}, %with a closed balanced factor,
 	and we assume $\partial S\in C^{2,\beta}$ for some $0<\beta<1$. %For any $\tilde{\phi}$ satisfying \eqref{keykey-phi},
 	%\begin{itemize}
 		%\item If, in addition, $0<\phi\in  C^{2}(\bar M)$ then for some $0<\alpha\leq\beta$,   there is a unique $C^{2,\alpha}$-smooth $(n-1)$-PSH solution to \eqref{MA-n-1} with homogeneous boundary data. 
 		%$u=0 \mbox{ on } \partial M$. 
 		  If $0\leq\phi^{1/n}\in C^{1,1}(\bar M)$ then the Dirichlet problem \eqref{MA-n-1} with homogeneous boundary data admits a $C^{1,\alpha}$-smooth 
 		$(n-1)$-PSH solution with $\forall 0<\alpha<1$ and $\Delta u\in {L^\infty(\bar M)}$  in the weak sense.

 %	\end{itemize}
 \end{theorem}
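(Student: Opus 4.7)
The plan is to obtain Theorem~\ref{thm3-volumeform} as a boundary-regularity-refined version of Theorem~\ref{thm2-n-1-yuan3} via a double approximation argument. The key structural features of the product $M=X\times S$ that make this possible are: (i) $\partial M=X\times\partial S$ is holomorphically flat, (ii) the prescribed boundary value is the constant $0$, and (iii) Lemma~\ref{obser-subsolution} gives $Z[\underline u]=0$ for any product-type subsolution depending only on $S$. Together with Propositions~\ref{proposition-quar-yuan2} and \ref{mix-Leviflat}, these features allow every quantitative boundary estimate invoked in the proof to depend only on the \emph{second} derivatives of $\partial M$, rather than the third, so a $C^{2,\beta}$ boundary regularity suffices.

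First I set up the geometry and strict subsolution. A defining function for $\partial S$ pulled back via $\pi_2$ serves as a defining function for $\partial M$ that is independent of the $X$-coordinates; thus $\partial M$ is Levi-flat, hence mean pseudoconcave and holomorphically flat. Letting $h\in C^{2,\beta}(\bar S)$ solve \eqref{possion-def} and setting $\underline u=t h$ for $t\gg1$, Lemma~\ref{obser-subsolution} gives $\underline Z=0$, and the identity $\ast(\sqrt{-1}\partial\overline\partial h\wedge\omega^{n-2})=(n-2)!\,\pi_1^*\omega_X$ reduces the subsolution form to $\tilde\chi+\frac{t}{n-1}\pi_1^*\omega_X$. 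For $t$ large this is strictly in $\mathcal{P}_{n-1}$ and its $n$-th wedge power dominates $(\phi+\delta)\omega^n$ for some $\delta>0$, fulfilling \eqref{strictly-subsolution-MA-n-1}.

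Next I run the approximation. Exhaust $S$ by smoothly bounded $S_k\subset\subset S$ with $\partial S_k\to\partial S$ in $C^{2,\beta}$, and mollify to obtain $\phi_\epsilon\in C^\infty(\bar M)$ with $\phi_\epsilon\ge\epsilon$ and $\phi_\epsilon^{1/n}\to\phi^{1/n}$ in $C^{1,1}$. On each smooth product $M_k=X\times S_k$, Theorem~\ref{thm0-n-1-yuan3} produces a smooth strictly $(n-1)$-PSH solution $u_{k,\epsilon}$ of \eqref{MA-n-1} with zero boundary data and RHS $\phi_\epsilon$, the strict subsolution being $t h_k$ with $\Delta_S h_k=1$ on $S_k$. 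The bound $\underline u_k\le u_{k,\epsilon}\le 0$ gives $C^0$ control, and \eqref{c-0-c1b} gives $\sup_{\partial M_k}|\nabla u_{k,\epsilon}|\le C$. Because $\partial M_k$ is holomorphically flat and the boundary data is the constant $0$, the constants in Propositions~\ref{proposition-quar-yuan2} and \ref{mix-Leviflat} depend only on the second derivatives of $\partial M_k$ (uniformly bounded by the $C^{2,\beta}$-convergence) and are independent of $(\inf_M\phi_\epsilon)^{-1}$; this yields a uniform quantitative boundary estimate $\sup_{\partial M_k}\Delta u_{k,\epsilon}\le C(1+\sup|\nabla u_{k,\epsilon}|^2)$. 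Combined with the global second-order estimate \eqref{STW-estimate} (applied to the smooth positive $\phi_\epsilon$) and the blow-up gradient estimate recalled in Section~\ref{sketchproof}, this gives uniform bounds $|u_{k,\epsilon}|_{C^1(\bar M_k)}+\|\Delta u_{k,\epsilon}\|_{L^\infty(\bar M_k)}\le C$ independent of $(k,\epsilon)$.

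Finally I pass to the limit. Compactness from the uniform $W^{2,\infty}$-bound yields a subsequence converging in $C^{1,\alpha}_{\mathrm{loc}}(\bar M)$ for every $\alpha<1$ to some $u\in C^{1,\alpha}(\bar M)$ with $\Delta u\in L^\infty(\bar M)$; the $(n-1)$-PSH property is preserved by uniform convergence, $\Omega_u^n=\phi\omega^n$ holds in the weak sense by bounded convergence, and the boundary condition $u|_{\partial M}=0$ follows from uniform $C^0$-convergence on the exhausting sets. The main difficulty throughout is keeping \emph{all} constants uniform in $k$ under only $C^{2,\beta}$-control of the boundary: this hinges essentially on the conjunction of the three structural features (i)-(iii), which cannot simultaneously be arranged on general Hermitian manifolds with boundary and is exactly what explains why this delicate weakening of boundary regularity is available on a product of a balanced manifold with a Riemann surface.
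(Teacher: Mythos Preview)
Your overall strategy matches the paper's: exhaust $S$ by smoothly bounded $S_k$, solve the nondegenerate problem on each $M_k=X\times S_k$ with constant boundary data, exploit the holomorphic flatness and mean pseudoconcavity of $\partial M_k$ so that the quantitative boundary estimates (Propositions~\ref{proposition-quar-yuan2} and~\ref{mix-Leviflat}) require only second derivatives of the boundary, and pass to the limit using the resulting uniform $W^{2,\infty}$ bound.

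There is, however, a genuine gap in your uniform $C^0$ and boundary gradient control. The claimed inequality $u_{k,\epsilon}\le 0$ is unjustified: the constant function $0$ is not a supersolution of \eqref{MA-n-1} in any useful sense (indeed $\tilde{\mathfrak g}[0]_{i\bar j}=\check\chi_{i\bar j}$ and $\lambda(\check\chi)\in\mathcal P_{n-1}$ whenever $\tilde\chi>0$), and no maximum principle forces $u\le 0$ here. Likewise, simply citing \eqref{c-0-c1b} does not give a bound uniform in $k$, since the constant there depends on the linear supersolution $\check u$ on $M_k$, whose uniform control you have not established. The paper closes this gap explicitly: on each $M_k$ it introduces the solution $w^{(k)}$ of the linear problem $\mathrm{tr}_\omega(\tilde{\mathfrak g}[w^{(k)}])=0$ with the given boundary data, and shows $w^{(k)}\le -\underline u-2t\alpha_k$ by verifying that $-\underline u-2t\alpha_k$ is itself a supersolution of that linear equation for $t\gg1$ (here $Z[-\underline u]=0$ from Lemma~\ref{obser-subsolution} is used again). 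This yields \eqref{key-proof-newregularity1} and hence the uniform bound \eqref{uniform-c0-c1}. Your argument can be repaired the same way, with $-\underline u_k=-th_k$ in place of $-\underline u$.

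A minor structural difference: the paper builds the exhaustion from level sets $\{h=-\alpha_k\}$ of the single Poisson solution $h$ on $S$, so that one fixed subsolution $\underline u=th$ serves on every $M_k$ with constant boundary value $-t\alpha_k$. You instead solve a fresh Poisson problem $h_k$ on each $S_k$. Both work, but the paper's choice avoids having to track uniform $C^2$ bounds on a whole family $\{h_k\}$.
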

 
 \begin{proof}
 	[Sketch of proof of Theorems \ref{thm2-volumeform}, \ref{thm2-volumeform-2} and \ref{thm3-volumeform}]
 	
 	It only requires to consider the nondegenerate case: 
 	\[\phi>\delta_0\mbox{ in } \bar M \mbox{ for some } \delta_0>0.\]
 	%Without loss of generality, we assume $0<\phi\in C^\infty(\bar M)$.
 	
 	Let $h$ be the solution to \eqref{possion-def}, and we 
 	denote $\underline{u}= th$. For $t\gg1$ we have
 	\begin{equation}
 		\begin{aligned}
 			\left(\tilde{\chi}+\frac{1}{n-1}(\Delta \underline{u}\omega-\sqrt{-1}\partial\overline{\partial} \underline{u})+
 			Z[\underline{u}]\right)^n\geq  (\phi+\delta_1)\omega^n \mbox{ in } M
 		\end{aligned}
 	\end{equation}
 	for some $\delta_1>0$.  In fact $Z[\underline{u}]=0$ by Lemma \ref{obser-subsolution}.
 	Since 
 	we know that %$h$ is a function on $S$ with 
 	\[h\in C^\infty(S)\cap C^{2,\beta}(\bar S), \mbox{  } h|_S<0, \mbox{  } \frac{\partial h}{\partial\nu}|_{\partial S}<0,\]
 	%and there exist a sequence of smooth approximating  functions $\{h^{(k)}\}$ on $S$ such that $h^{(k)}\rightarrow h$ in $C^{2,\beta}$-norm  and
 	% \begin{equation} \begin{aligned}
 	%   \omega_S/2\leq \sqrt{-1}\partial_S\overline{\partial}_S h^{(k)}   \mbox{ in } S. \nonumber
 	%  \end{aligned} \end{equation}
 	% (here we use \eqref{possion-def});
 	% By the diagonal method and Sard's theorem if necessary,
 	we can carefully choose $\{\alpha_k\}$ with $\alpha_{k}\rightarrow 0^+$ as $k\rightarrow +\infty$ then use a sequence of level sets %of $h^{(k)}$, say 
 	$\{h=-\alpha_k\}$
 	to enclose a smooth
 	%complex submanifold of complex dimension one, 
 	Riemann surface, denoted by $S_{k}$, such that 
 	$\cup S_{k}=S$ and $\partial S_{k}$ converge to $\partial S$ in the norm of $C^{2,\beta}$. 
 	Let's denote $M_{k}=X\times S_{k}$. 
 	For any $k\geq 1$, there exists a $\phi^{(k)}\in C^\infty(\bar M_k)$ 
 	%and a $\beta_k>0$ with $\lim_{k\rightarrow+\infty}\beta_k=0^+$ 
 	such that
 	\[|\phi-\phi^{(k)}|_{C^2(\bar M_k)}\leq \beta_k \rightarrow0^+ \mbox{ as } k\rightarrow+\infty.\]
 	For $k\gg1$ we have $\beta_k<\min\{\delta_0,\delta_1\}$, then
 	\begin{equation}
 		\label{subsolution-newregularity}
 		\begin{aligned}
 			\left(\tilde{\chi}+\frac{1}{n-1}(\Delta \underline{u}\omega-\sqrt{-1}\partial\overline{\partial}\underline{u})+Z[\underline{u}]\right)^n
 			\geq \,&\phi^{(k)} \omega^n  \,& \mbox{ in } M_k, 
 			\\ \underline{u}=\,& -t\alpha_k  \,& \mbox{ on }\partial M_k.  \nonumber
 		\end{aligned}
 	\end{equation}
 	%  and
 	% \begin{equation} \label{supersolution-k-product}\begin{aligned}
 	%   \Delta \underline{u}^{(k)}  \geq N\mathrm{tr}_\omega (\pi_2^*\omega_S)/2=N(n-1)/2 \mbox{ in } M_{k}.  
 	%\end{aligned}\end{equation}
 	%Moreover, $Z[\underline{u}^{(k)}]=0$ according to Lemma \ref{obser-subsolution}.

 	According to Theorem \ref{thm0-n-1-yuan3} %and Remark \ref{remark-weak2} 
 	we have a unique smooth $(n-1)$-PSH function $u^{(k)}\in C^{\infty}(\bar M_{k})$ to solve
 	\begin{equation}
 		\label{solution-newregularity}
 		\begin{aligned}
 			\left(\tilde{\chi}+\frac{1}{n-1}(\Delta u^{(k)} \omega-\sqrt{-1}\partial\overline{\partial}u^{(k)})+Z[u^{(k)}]\right)^n=\,& \phi^{(k)} \omega^n
 			\,& \mbox{ in } M_k, 
 			\\  u^{(k)} =\,& -t\alpha_k  \,& \mbox{ on } \partial M_k. \nonumber
 		\end{aligned}
 	\end{equation}
 	Moreover,  Propositions \ref{remark2.4}, \ref{proposition-quar-yuan2} and \ref{mix-Leviflat} or \ref{mix-Leviflat--2}, and 
 	Theorem \ref{STW-estimate1}
 	%\eqref{STW-estimate} 
 	yield
 	\begin{equation}
 		\label{uniform-00}
 		\begin{aligned}
 			\sup_{M_{k}}\Delta u^{(k)}\leq C_k (1+\sup_{M_{k}}|\nabla u^{(k)}|^2), \nonumber
 		\end{aligned}
 	\end{equation}
 	where $C_k$ depends on $|\nabla u^{(k)}|_{C^0(\partial M_{k})}$, $|u^{(k)}|_{C^0(M_{k})}$, $|\underline{u}|_{C^{2}(M_{k})}$,  
 	$|(\phi^{(k)})^{1/n}|_{C^{2}(M_{k})}$ ($|(\phi^{(k)})^{1/(n-1)}|_{C^{2}(M_{k})}$ if \eqref{keykey-phi} holds),  $\partial M_{k}$ up to second order derivatives  and other known data (but not on $\inf_M \phi^{(k)}$).

 	If there is a uniform positive constant $C$ depending not on $k$, such that
 	\begin{equation}
 		\label{uniform-c0-c1}
 		\begin{aligned}
 			|u^{(k)}|_{C^0(M_{k})}+\sup_{\partial M_{k}}|\nabla u^{(k)}|\leq C,
 		\end{aligned}
 	\end{equation}
 	%together with the construction of approximating Dirichlet problems,
 	then
 	\begin{equation}
 		\label{uniform-00}
 		\begin{aligned}
 			\sup_{M_{k}}\Delta u^{(k)}\leq C' (1+\sup_{M_{k}}|\nabla u^{(k)}|^2) \mbox{ independent of } k. \nonumber
 		\end{aligned}
 	\end{equation}
 	%   holds for a uniform constant $C'$ which is independent of $k$. 
 	Thus we have $|u|_{C^2(M_{k})}\leq C$ depending not on $k$ (here we  use blow up argument to derive gradient estimate).  
 	%Thus the equations are all uniformly elliptic, and as in  \cite{Guan2010Li} we can directly prove the bound of real Hessians of solutions.
 	Finally, we are able to apply Silvestre-Sirakov's \cite{Silvestre2014Sirakov} result to 
 	derive $C^{2,\alpha'}$ estimates on the boundary, while
 	the convergence of  $\partial M_{k}$ in the norm $C^{2,\beta}$ 
 	allows  us to take a limit ($\alpha'$ can be uniformly chosen).
 	
 	It only requires to prove \eqref{uniform-c0-c1}.  Let $w^{(k)}$ be the solution of
 	\begin{equation}
 		\label{supersolution-k}
 		\begin{aligned}
 			\,&\Delta w^{(k)}+\mathrm{tr}_\omega \tilde{\chi}+\mathrm{tr}_\omega(Z[w^{(k)}])=0 \mbox{ in } M_{k}, \,& w^{(k)}=-t\alpha_k \mbox{ on } \partial M_{k}. \nonumber
 		\end{aligned}
 	\end{equation}
 	
 	By maximum principle and the boundary value condition, 
 	%$\underline{u}= u^{(k)}=w^{(k)} =-N\alpha_k \mbox{ on } \partial M_{k}$ 
 	we have 
 	\begin{equation}
 		\label{approxi-boundary1}
 		\begin{aligned}
 			\underline{u} \leq u^{(k)}\leq w^{(k)} \mbox{ in } M_{k}, \mbox{  } 
 			\frac{\partial \underline{u}}{\partial \nu} \leq  \frac{\partial u^{(k)}}{\partial \nu} \leq \frac{\partial w^{(k)}}{\partial \nu}   \mbox{ on } \partial M_{k}.
 		\end{aligned}
 	\end{equation}

 	It remains to prove  
 	\begin{equation}
 		\label{key-proof-newregularity1}
 		\begin{aligned}
 			\sup_{M_{k}} w^{(k)} + \sup_{\partial M_{k}} \frac{\partial w^{(k)}}{\partial \nu}  \leq C \mbox{ independent of } k.
 		\end{aligned}
 	\end{equation}
 	%Let $N\geq  \kappa_1^{-1}\sup_M \mathrm{tr}_\omega \tilde{\chi}$,  
 	%where $\kappa_1=\inf_M  \frac{1}{2}\mathrm{tr}_\omega (\pi_2^*\omega_S)$.  
 	%Let $N\geq   \sup_M \mathrm{tr}_\omega \tilde{\chi} $,  then
 	For $t\gg1$,
 	\begin{equation}
 		\begin{aligned}
 			\Delta(-\underline{u}-2t\alpha_k)+\mathrm{tr}_\omega \tilde{\chi}+\mathrm{tr}_\omega (Z[-\underline{u}])=-t+\mathrm{tr}_\omega\tilde{\chi}\leq0. \nonumber
 		\end{aligned}
 	\end{equation}
 	Here Lemma \ref{obser-subsolution} implies $Z[-\underline{u}]=0$.  
 	% Here we use Lemma \ref{obser-subsolution} to derive  $Z[-\underline{u}^{(k)}]=0$.
 	Applying comparison principle, % to \eqref{supersolution-k} and \eqref{supersolution-k-product},  
 	\begin{equation}
 		\begin{aligned}
 			\,& w^{(k)}\leq -\underline{u}-2t\alpha_k  \mbox{ in } M_{k}, \,&  \frac{\partial w^{(k)}}{\partial \nu} \leq -\frac{\partial
 				\underline{u}}{\partial \nu} \mbox{ on } \partial M_{k} \nonumber
 		\end{aligned}
 	\end{equation}
 	as required. 
 	%This completes proof of \eqref{uniform-c0-c1}, then
 	We then obtain a $C^{2,\alpha}$-smooth $(n-1)$-PSH function to solve 
 	\begin{equation}
 		\label{approx-equ-homogeneous1}
 		\begin{aligned}
 			\left(\tilde{\chi}+\frac{1}{n-1}(\Delta u\omega-\sqrt{-1}\partial\overline{\partial}u)+Z\right)^n= \phi \omega^n \mbox{ in } M, 
 			\mbox{  } u = 0\mbox{ on } \partial M.  \nonumber
 		\end{aligned}
 	\end{equation}

 \end{proof}

 \section{Further discussion on more general equations}
 \label{sec6}
 
 Our method works for more general equations generated by  smooth symmetric functions $f$ defined on $\Gamma\subset\mathbb{R}^n$, dating to  the work of Caffarelli-Nirenberg-Spruck \cite{CNS3}. We consider the Dirichlet problem
 % \eqref{mainequ-gauduchon-general*},
 \begin{equation}
 	\label{mainequ-gauduchon-general**}
 	\begin{aligned}
 		\,& f(\lambda(*\Phi[u]))=\psi \mbox{ in } M,     \,&
 		u=\varphi \mbox{ on }   \partial M % \nonumber
 	\end{aligned}
 \end{equation}
 %on compact Hermitian manifolds with smooth boundary, %\textit{mean pseudoconcave} boundary.
 %$U[u]=\chi+ (\Delta u)\omega-\sqrt{-1}\partial\overline{\partial}u +\varrho Z(\partial u,\overline{\partial}u) =*\Phi$ for
 % In equation  \eqref{mainequ-gauduchon-general*},
 where $*\Phi[u]=\chi+\Delta u \omega-\sqrt{-1}\partial\overline{\partial}u+\varrho Z[u],$  
 and $\varrho$ is a smooth function, i.e.
 $$\Phi[u] =*\chi+\frac{1}{(n-2)!}\sqrt{-1}\partial\overline{\partial}u\wedge\omega^{n-2}+\frac{\varrho}{(n-1)!}\mathfrak{Re}(\sqrt{-1}\partial u\wedge \overline{ \partial}\omega^{n-2}).$$
 Here $\Gamma$ is an open symmetric convex cone containing positive cone
 \[\Gamma_n:=\{\lambda\in \mathbb{R}^n: \mbox{ each component } \lambda_i>0\}\subseteq\Gamma\]
 with vertex at the origin  and with  boundary $\partial \Gamma\neq \emptyset$.
 In addition we assume
 \begin{equation}
 	\label{elliptic}
 	\begin{aligned}
 		\,& f_i(\lambda):=\frac{\partial f}{\partial \lambda_{i}}(\lambda)> 0  \mbox{ in } \Gamma,\,&  \forall 1\leq i\leq n,
 	\end{aligned}
 \end{equation}
 \begin{equation}\label{concave} \begin{aligned}
 		f \mbox{ is  concave in } \Gamma,
 \end{aligned} \end{equation}
 \begin{equation}
 	\label{addistruc}
 	\begin{aligned}
 		\mbox{For any } \lambda\in\Gamma, \mbox{  } 
 		\lim_{t\rightarrow+\infty} f(t\lambda)>-\infty,
 	\end{aligned}
 \end{equation}
 \begin{equation}
 	\label{unbounded-2}
 	\begin{aligned}
 		\,&  \lim_{t\rightarrow+\infty}  f(\lambda_1+t,\cdots,\lambda_{n-1}+t,\lambda_n)=\sup_\Gamma f, \,& \forall \lambda\in\Gamma.
 	\end{aligned}
 \end{equation}
  %The study of equations of this type goes back to the work of Caffarelli-Nirenberg-Spruck \cite{CNS3}.
 The equation \eqref{mainequ-gauduchon-general**} is nondegenerate when the right-hand side satisfies
 \begin{equation}
 	\label{nondegenerate}
 	\begin{aligned}
 		% \delta_{\psi,f}>0;
 		\inf_{M} \psi >\sup_{\partial \Gamma} f. 
 		%\mbox{ where } \sup_{\partial \Gamma}f :=\sup_{\lambda_{0}\in \partial \Gamma } \limsup_{\lambda\rightarrow \lambda_{0}}f(\lambda).
 	\end{aligned}
 \end{equation}
Also it is called a degenerate equation if $\inf_{M} \psi=\sup_{\partial \Gamma} f$ and $f\in C^\infty(\infty)\cap C(\Gamma)$, where 
 $$\sup_{\partial \Gamma}f :=\sup_{\lambda_{0}\in \partial \Gamma } \limsup_{\lambda\rightarrow \lambda_{0}}f(\lambda).$$
 %Here  \[\delta_{\psi,f}:=\inf_M\psi-\sup_{\partial \Gamma}f,  \mbox{ where } \sup_{\partial \Gamma}f :=\sup_{\lambda_{0}\in \partial \Gamma } \limsup_{\lambda\rightarrow \lambda_{0}}f(\lambda).\]

 \begin{theorem}
 	Let $(M,J,\omega)$ be a compact Hermitian manifold  with smooth boundary.
 	Suppose, in addition to 
 	%\eqref{elliptic}, \eqref{concave}, \eqref{addistruc}, \eqref{unbounded-2} and 
 	\eqref{elliptic}-\eqref{nondegenerate}, that $\varphi$, $\psi$ are all smooth and there is a $C^{2,1}$ function such that
 	\begin{equation}
 		\begin{aligned}
 			f(\lambda(*\Phi[\underline{u}]))\geq\psi, \mbox{ } \lambda(*\Phi[\underline{u}])\in\Gamma \mbox{ in } \bar M,     \quad
 			\underline{u}=\varphi \mbox{ on }   \partial M. % \nonumber
 		\end{aligned}
 	\end{equation}
 	Then Dirichlet problem \eqref{mainequ-gauduchon-general**} admits a unique smooth solution with $\lambda(*\Phi[u])\in\Gamma$ in $\bar M$. 
 \end{theorem}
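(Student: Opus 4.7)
The plan is to invoke the continuity method, reducing the existence statement to a priori $C^{2,\alpha}$ estimates for admissible solutions (i.e.\ with $\lambda(*\Phi[u])\in\Gamma$) of a family of equations joining an explicitly solvable one to \eqref{mainequ-gauduchon-general**}. Using the subsolution $\underline{u}$ as the starting point, openness of the solvable set of parameters follows from the implicit function theorem under \eqref{elliptic}, while closedness is exactly the a priori estimate. For uniqueness, the comparison principle applies under \eqref{elliptic}-\eqref{concave}.

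The a priori estimates proceed in four stages. For $C^0$, the lower bound $u\geq\underline{u}$ is immediate from the comparison principle; for the upper bound, I would construct a supersolution $\check{u}$ as the solution of the linear problem $\Delta\check{u}+\mathrm{tr}_\omega\chi+\varrho\,\mathrm{tr}_\omega Z[\check{u}]=0$ with $\check{u}|_{\partial M}=\varphi$, exploiting \eqref{unbounded-2} at the maximum point of $u-\check{u}$ to rule out an interior maximum. The sandwich $\underline{u}\leq u\leq\check{u}$ with equality on $\partial M$ then yields the boundary gradient bound $\sup_{\partial M}|\nabla u|\leq C$. Next comes the quantitative boundary estimate $\sup_{\partial M}\Delta u\leq C(1+\sup_M|\nabla u|^2)$: the barrier constructions in Sections \ref{sec3} and \ref{sec4} carry over verbatim provided one replaces Lemma \ref{type-lemma1} by its abstract analogue, which asserts that under \eqref{elliptic}-\eqref{unbounded-2} there is a dichotomy — either $\lambda(*\Phi[u])$ lies a definite distance from $\partial\Gamma$ (in which case $\min f_i/\max f_i$ is controlled and the Laplacian has an immediate a priori upper bound), or else $\sum_i f_i$ is uniformly large and $\sum_i f_i(\underline{\lambda}_i-\lambda_i)$ dominates the error terms. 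This dichotomy is a standard consequence of concavity and the admissibility-at-infinity condition \eqref{unbounded-2}, as used in \cite{Gabor,yuan-V}. The $\varrho Z[u]$ contribution is handled exactly as in Lemma \ref{key-lemma-B2}, because $\varrho$ being smooth only modifies constants and preserves the vanishing-at-origin structure of the coefficients of the normal and tangential derivative terms.

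For the third stage, the global second order estimate $\sup_M \Delta u\leq C(1+\sup_M|\nabla u|^2+\sup_{\partial M}\Delta u)$ is obtained from the maximum principle applied to a test quantity of the form $\log(\Delta u+K)+\phi(|\nabla u|^2)-Au$, combining with concavity as in the Sz\'ekelyhidi-Tosatti-Weinkove argument recalled in Theorem \ref{STW-estimate1}; combining with the boundary estimate above gives $\sup_M\Delta u\leq C(1+\sup_M|\nabla u|^2)$. The gradient bound follows by the standard blow-up argument, where the rescaled limit on $\mathbb{C}^n$ (or a half-space) is ruled out by a Liouville-type theorem using \eqref{unbounded-2}. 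Finally, Evans-Krylov gives interior $C^{2,\alpha}$ control, and Krylov's boundary regularity (or the Silvestre-Sirakov \cite{Silvestre2014Sirakov} refinement when less boundary regularity is available) yields the full $C^{2,\alpha}$ estimate under \eqref{concave}. Higher regularity is then bootstrapped by the linear Schauder theory.

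The main obstacle is the second stage: carrying over the quantitative boundary estimate with constants independent of $(\inf_M\psi-\sup_{\partial\Gamma}f)^{-1}$ in the mean pseudoconcave case and independent of $\sup_M|\nabla u|$ in general. The proof in Sections \ref{sec3}-\ref{sec4} was written for the operator $P_{n-1}$, but a careful reading shows that only ellipticity, concavity, and \eqref{unbounded-2} are actually used — both to produce the admissible reference vector in the tangential estimate (the analogue of \eqref{opppp-Gauduchon}) and to absorb the $\varrho Z[u]$ gradient terms via the last paragraph of Lemma \ref{key-lemma-B2}. Once this abstract adaptation is performed, the remaining steps are essentially routine.
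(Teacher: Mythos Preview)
Your proposal is correct and follows essentially the same route the paper indicates: the paper gives no explicit proof but remarks that for $\Gamma=\Gamma_n$ the argument is parallel to the $(n-1)$-PSH case developed in Sections \ref{sec3}--\ref{sec4}, while for $\Gamma\neq\Gamma_n$ the double-normal step simplifies because $\Gamma_{\mathbb{R}^1}^\infty=\mathbb{R}$ renders the bound on $(1-t_0)^{-1}$ in Lemma \ref{key-lemma-B1} unnecessary. Your uniform treatment covers both cases and is the expected approach; the only minor slip is that the upper barrier $\check{u}$ works because $\Gamma\subset\Gamma_1$ (a consequence of the cone axioms), not via \eqref{unbounded-2}.
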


 \begin{remark}
 	Let's denote $$\Gamma_{\mathbb{R}^1}^\infty:=\{t\in\mathbb{R}: (R,\cdots,R,t)\in\Gamma \mbox{ for some } R>0\}.$$
 	%Moreover, we can solve the degenerate Dirichlet problem, when the Levi form satisfies  $$-(\kappa_1+\cdots+\kappa_{n-1})\in \Gamma_{\mathbb{R}^1}^\infty.$$ 
 	The case $\Gamma\neq\Gamma_n$ is relatively simple,  since 
 	$\Gamma_{\mathbb{R}^1}^\infty=\mathbb{R}$ in this case.
 	%Thus it requires only to consider the case $\Gamma=\Gamma_n$.
 	While for the case $\Gamma=\Gamma_n$, the proof is almost parallel to that of Monge-Amp\`ere equation for $(n-1)$-PSH functions.  Moreover, we can solve the degenerate Dirichlet problem, when the Levi form satisfies  $$-(\kappa_1+\cdots+\kappa_{n-1})\in \overline{\Gamma_{\mathbb{R}^1}^\infty}.$$ 
 \end{remark}
 
\begin{remark}
	
	In the presence of	\eqref{elliptic}-\eqref{addistruc},  \eqref{unbounded-2} is automatically satisfied if $\Gamma\neq\Gamma_n$.  
	%Moreover, when $\Gamma\neq\Gamma_n$ ($\Gamma_{\mathbb{R}^1}^\infty=\mathbb{R}$) then the estimates do not depend on $(\delta_{\psi,f})^{-1}$ even $\partial M$ is not mean pseudoconcave.
	%In addition one can use the partial uniform ellipticity proved in \cite{yuan2020conformal} to show that  the equation  \eqref{mainequ-gauduchon-general**} with  $\Gamma\neq\Gamma_n$ is in effect of fully uniform ellipticity, which allows us to derive interior estimates.  

\end{remark} 
 
 \begin{remark}
 	On the product $M=X\times S$ with closed balanced factor, 
 	we can use the solution of \eqref{possion-def} to construct the strictly subsolutions.
 \end{remark}

 \begin{appendix}
 	% \subsection{Preliminaries}

 	\section{A quantitative lemma} 
 	\label{appendix1}

 The  following lemma  proposed in earlier works \cite{yuan2017}\renewcommand{\thefootnote}{\fnsymbol{footnote}}\footnote{The results in \cite{yuan2017}  
 		%followed by \cite{yuan2019PAMQ},
 		%\cite{yuan2019,yuan2019PAMQ,yuan2021},
 		%[arXiv: 2001.09238], [arXiv:2106.14837] and [Pure Appl. Math. Q. 16 (2020), 1585-1617; MR4221006], 
 		were removed to \cite{yuan-V}. More precisely, the paper  \cite{yuan-V} is essentially extracted from %[arXiv:2203.03439],
 	\cite{yuan2017}, 
	and the first parts of   [arXiv:2001.09238] and   [arXiv:2106.14837].}, 
 	%followed by \cite{yuan2019PAMQ},
 	 is a key ingredient in proof of Proposition 
 	\ref{proposition-quar-yuan2}.

 	\begin{lemma}
 		[\cite{yuan2017,yuan-V}]
 		%\label{refinement2}
 		\label{yuan's-quantitative-lemma}
 		Let $A$ be an $n\times n$ Hermitian matrix
 		\begin{equation}\label{matrix3}\left(\begin{matrix}
 				d_1&&  &&a_{1}\\ &d_2&& &a_2\\&&\ddots&&\vdots \\ && &  d_{n-1}& a_{n-1}\\
 				\bar a_1&\bar a_2&\cdots& \bar a_{n-1}& \mathrm{{\bf a}} %\nonumber
 			\end{matrix}\right)\end{equation}
 		with $d_1,\cdots, d_{n-1}, a_1,\cdots, a_{n-1}$ fixed, and with $\mathrm{{\bf a}}$ variable.
 		Denote $\lambda=(\lambda_1,\cdots, \lambda_n)$ by   the eigenvalues of $A$.
 		%With the same notation in Lemma \ref{refinement3}.
 		Let $\epsilon>0$ be a fixed constant.
 		Suppose that  the parameter $\mathrm{{\bf a}}$ in $A$ satisfies  the quadratic
 		growth condition  
 		\begin{equation}
 			\begin{aligned}
 				\label{guanjian1-yuan}
 				\mathrm{{\bf a}}\geq \frac{2n-3}{\epsilon}\sum_{i=1}^{n-1}|a_i|^2 +(n-1)\sum_{i=1}^{n-1} |d_i|+ \frac{(n-2)\epsilon}{2n-3}.
 			\end{aligned}
 		\end{equation}
 		%where  $\epsilon$ is a positive constant.
 		% \begin{equation}
 		%a \geq \frac{2}{\epsilon}\sum_{i=1}^{n-1}|a_i|^2 +(n-1)\sum_{i=1}^{n-1} |d_i|+n \epsilon. \nonumber
 		% \end{equation}
 		Then the eigenvalues % (possibly with a proper permutation)
 		(possibly with a proper order) behave like
 		\begin{equation}
 		\begin{aligned}
 			d_{\alpha}-\epsilon 	\,& < 
 			\lambda_{\alpha} < d_{\alpha}+\epsilon, \mbox{  } \forall 1\leq \alpha\leq n-1, \\ \nonumber
 			%	\end{aligned}	\end{equation}
 			%	\begin{equation}\begin{aligned}
 			\mathrm{{\bf a}} 	\,& \leq \lambda_{n}
 			< \mathrm{{\bf a}}+(n-1)\epsilon. \nonumber
 		\end{aligned}
 	\end{equation}
 	\end{lemma}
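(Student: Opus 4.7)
The plan is to exploit the ``arrow'' shape of $A$: all off-diagonal entries are confined to the last row and column. Writing the eigenvalue equation $Av = \lambda v$ with $v = (v_1, \ldots, v_n)^T$, the first $n-1$ rows give $v_\alpha = a_\alpha v_n/(\lambda - d_\alpha)$ whenever $\lambda \neq d_\alpha$, and substitution into the last row yields the secular equation
\begin{equation}
\phi(\lambda) := (\lambda - \mathrm{{\bf a}}) - \sum_{\alpha=1}^{n-1} \frac{|a_\alpha|^2}{\lambda - d_\alpha} = 0.  \nonumber
\end{equation}
On each interval between consecutive poles, $\phi'(\lambda) = 1 + \sum_\alpha |a_\alpha|^2/(\lambda-d_\alpha)^2 > 0$, so $\phi$ is strictly increasing and attains every real value exactly once. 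Ordering $d_{(1)} \le d_{(2)} \le \cdots \le d_{(n-1)}$ and reading off $\phi(d_{(\alpha)}^{\pm}) = \mp\infty$, one locates exactly one root in each of the intervals $(-\infty, d_{(1)})$, $(d_{(1)}, d_{(2)}), \ldots, (d_{(n-1)}, +\infty)$, giving $n$ eigenvalues satisfying the Cauchy interlacing pattern $\lambda_\alpha \le d_{(\alpha)} \le \lambda_{\alpha+1}$.

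For the large root $\lambda_n > d_{(n-1)}$, the Rayleigh bound $e_n^* A e_n = \mathrm{{\bf a}}$ gives $\lambda_n \ge \mathrm{{\bf a}}$, the left half of the desired inequality. From \eqref{guanjian1-yuan} one has $\mathrm{{\bf a}} - \sum_\beta |d_\beta| \ge \frac{2n-3}{\epsilon}\sum_\beta|a_\beta|^2$, so every $\lambda_n - d_\alpha \ge \mathrm{{\bf a}} - \sum_\beta|d_\beta| > 0$, and from $\phi(\lambda_n) = 0$,
\begin{equation}
\lambda_n - \mathrm{{\bf a}} = \sum_{\alpha=1}^{n-1} \frac{|a_\alpha|^2}{\lambda_n - d_\alpha} \le \frac{\sum_\alpha |a_\alpha|^2}{\mathrm{{\bf a}} - \sum_\beta |d_\beta|} \le \frac{\epsilon}{2n-3} < (n-1)\epsilon.  \nonumber
\end{equation}

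For $\alpha \le n-1$, the upper bound $\lambda_\alpha \le d_{(\alpha)} < d_{(\alpha)}+\epsilon$ is immediate from interlacing. For the matching lower bound, I would rewrite the secular identity at $\lambda = \lambda_\alpha$ by separating positive and negative contributions: dropping the nonnegative terms (those $\beta$ with $d_\beta < \lambda_\alpha$) yields
\begin{equation}
\sum_{d_\beta > \lambda_\alpha} \frac{|a_\beta|^2}{d_\beta - \lambda_\alpha} \ge \mathrm{{\bf a}} - \lambda_\alpha \ge \mathrm{{\bf a}} - \sum_\beta|d_\beta| \ge \frac{2n-3}{\epsilon}\sum_\beta|a_\beta|^2. \nonumber
\end{equation}
Rewriting this as $\sum_{d_\beta > \lambda_\alpha} |a_\beta|^2\bigl(\frac{1}{d_\beta - \lambda_\alpha} - \frac{2n-3}{\epsilon}\bigr) \ge 0$, a pigeonhole forces at least one $\beta$ in the sum to satisfy $d_\beta - \lambda_\alpha \le \epsilon/(2n-3)$. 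Since the smallest $d_\beta$ exceeding $\lambda_\alpha$ is $d_{(\alpha)}$ by interlacing, we obtain $0 \le d_{(\alpha)} - \lambda_\alpha \le \epsilon/(2n-3) < \epsilon$. Pairing $\lambda_\alpha \leftrightarrow d_{(\alpha)}$ for $\alpha \le n-1$ with $\lambda_n \leftrightarrow \mathrm{{\bf a}}$ furnishes the ``proper order'' required by the statement.

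The main technical nuisance is the degenerate case where some $a_\alpha$ vanish or several $d_\alpha$ coincide; there $\phi$ sheds poles and the secular bijection must be supplemented by direct eigenvectors of the form $v = e_\alpha$ or linear combinations within a common $d$-eigenspace. I would handle this by first establishing the conclusion in the generic regime (all $a_\alpha \neq 0$ and all $d_\alpha$ distinct), then passing to the limit, noting that the eigenvalues of $A$ and both sides of the displayed bounds depend continuously on the entries of $A$, while \eqref{guanjian1-yuan} is a closed condition preserved under arbitrarily small perturbations.
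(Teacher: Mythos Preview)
Your argument is correct and takes a genuinely different route from the paper. The paper first proves an auxiliary lemma (Lemma~\ref{refinement}) by a trace--contradiction argument, showing only that each small eigenvalue lies within $\epsilon$ of \emph{some} $d_i$; it then upgrades this to the precise pairing $\lambda_\alpha\leftrightarrow d_\alpha$ by a deformation argument: applying Lemma~\ref{refinement} with $\epsilon/(2n-3)$ in place of $\epsilon$, decomposing $\bigcup_\alpha(d_\alpha-\tfrac{\epsilon}{2n-3},d_\alpha+\tfrac{\epsilon}{2n-3})$ into connected components $J_k$, and arguing that the integer-valued counting function ``number of eigenvalues in $J_k$'' is continuous in $\mathbf a$, hence constant, and computable in the limit $\mathbf a\to\infty$. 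Your approach bypasses both the auxiliary lemma and the deformation: the secular equation and strict monotonicity of $\phi$ give Cauchy interlacing $\lambda_\alpha\le d_{(\alpha)}\le\lambda_{\alpha+1}$ outright, and then the pigeonhole on $\sum_{d_\beta>\lambda_\alpha}|a_\beta|^2\bigl(\tfrac{1}{d_\beta-\lambda_\alpha}-\tfrac{2n-3}{\epsilon}\bigr)\ge0$ forces $d_{(\alpha)}-\lambda_\alpha\le\epsilon/(2n-3)$ directly. This is more elementary and in fact yields the sharper bound $|\lambda_\alpha-d_{(\alpha)}|\le\epsilon/(2n-3)$, whereas the paper's connected-component argument only gives $|\lambda_\alpha-d_\alpha|<\epsilon$. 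One small cosmetic point: your final chain $\le\epsilon/(2n-3)<(n-1)\epsilon$ fails as written when $n=2$ (both sides equal $\epsilon$); the fix is to note that $\lambda_n>\mathbf a$ strictly in the generic case, so $\lambda_n-d_\alpha>\mathbf a-\sum_\beta|d_\beta|$ and the preceding inequality is already strict.
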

 	
 	%\noindent {\bf Proof of Lemma \ref{yuan's-quantitative-lemma}}
 	
 	%\vspace{2mm}
 	For convenience we give the proof of Lemma \ref{yuan's-quantitative-lemma} in this appendix. %building on \cite{yuan2017}.
 	We start with the case of $n=2$. 
 	%In this case, we prove that  if $\mathrm{{\bf a}} \geq \frac{|a_1|^2}{ \epsilon}+ d_1$
 	%\begin{equation}
 	%\label{guanjian1-n2}
 	%\begin{aligned}
 	%\mathrm{{\bf a}} \geq \frac{|a_1|^2}{ \epsilon}+ d_1
 	%\end{aligned}
 	%\end{equation}
 	%%$$a>\frac{|a_1|^2}{ \epsilon}+d_1-\epsilon $$
 	% then $$0\leq d_1- \lambda_1=\lambda_2-\mathrm{{\bf a}} <\epsilon.$$
 	%Let's briefly  present the discussion as follows:
 	For $n=2$, the eigenvalues of $\mathrm{A}$ are
 	$$\lambda_{1}=\frac{\mathrm{{\bf a}}+d_1- \sqrt{(\mathrm{{\bf a}}-d_1)^2+4|a_1|^2}}{2} 
 \mbox{ and } \lambda_2=\frac{\mathrm{{\bf a}}+d_1+\sqrt{(\mathrm{{\bf a}}-d_1)^2+4|a_1|^2}}{2}.$$
 	We can assume $a_1\neq 0$; otherwise we are done.
 	If $\mathrm{{\bf a}} \geq \frac{|a_1|^2}{ \epsilon}+ d_1$ then one has
 	\begin{equation}
 		\begin{aligned}
 			0\leq d_1- \lambda_1 =\lambda_2-\mathrm{{\bf a}}
 			= \frac{2|a_1|^2}{\sqrt{ (\mathrm{{\bf a}}-d_1)^2+4|a_1|^2 } +(\mathrm{{\bf a}}-d_1)}
 			< \frac{|a_1|^2}{\mathrm{{\bf a}}-d_1 } \leq \epsilon.   \nonumber
 		\end{aligned}
 	\end{equation}
 	%Here we use $a_1\neq 0$ to verify that the strictly inequality in the above formula holds.
 	%We hence obtain Lemma \ref{yuan's-quantitative-lemma}  for $n=2$.

 	The following lemma enables us  to count  the eigenvalues near the diagonal elements
 	via a deformation argument.
 	It is an essential  ingredient in the proof of  Lemma \ref{yuan's-quantitative-lemma}   for general $n$.
 	\begin{lemma}
 		[\cite{yuan2017,yuan-V}]
 		\label{refinement}
 		Let $\mathrm{A}$ be an $n\times n$  Hermitian matrix
 		\begin{equation}
 			\label{matrix2}
 			\left(
 			\begin{matrix}
 				d_1&&  &&a_{1}\\
 				&d_2&& &a_2\\
 				&&\ddots&&\vdots \\
 				&& &  d_{n-1}& a_{n-1}\\
 				\bar a_1&\bar a_2&\cdots& \bar a_{n-1}& \mathrm{{\bf a}} \nonumber
 			\end{matrix}
 			\right)
 		\end{equation}
 		with $d_1,\cdots, d_{n-1}, a_1,\cdots, a_{n-1}$ fixed, and with $\mathrm{{\bf a}}$ variable.
 		Denote
 		$\lambda_1,\cdots, \lambda_n$ by the eigenvalues of $\mathrm{A}$ with the order
 		$\lambda_1\leq \lambda_2 \leq\cdots \leq \lambda_n$.
 		Fix a positive constant $\epsilon$.
 		Suppose that the parameter $\mathrm{{\bf a}}$ in the matrix $\mathrm{A}$ 
 		satisfies  the following quadratic growth condition
 		%$$a>\frac{2}{\epsilon}\sum_{i=1}^{n-1} a_i^2+ (n-1)\sum_{i=1}^{n-1} |d_i|+ (n-2)\epsilon.$$
 		\begin{equation}
 			\label{guanjian2}
 			\begin{aligned}
 				\mathrm{{\bf a}} \geq \frac{1}{\epsilon}\sum_{i=1}^{n-1} |a_i|^2+\sum_{i=1}^{n-1}  [d_i+ (n-2) |d_i|]+ (n-2)\epsilon.
 			\end{aligned}
 		\end{equation}
 		Then for any $\lambda_{\alpha}$ $(1\leq \alpha\leq n-1)$ there exists $d_{i_{\alpha}}$
 		with lower index $1\leq i_{\alpha}\leq n-1$ such that
 		\begin{equation}
 			\label{meishi}
 			\begin{aligned}
 				|\lambda_{\alpha}-d_{i_{\alpha}}|<\epsilon,
 			\end{aligned}
 		\end{equation}
 		\begin{equation}
 			\label{mei-23-shi}
 			0\leq \lambda_{n}-\mathrm{{\bf a}} <(n-1)\epsilon + |\sum_{\alpha=1}^{n-1}(d_{\alpha}-d_{i_{\alpha}})|.
 		\end{equation}
 	\end{lemma}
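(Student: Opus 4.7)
The plan is to prove Lemma \ref{refinement} by induction on $n$, the base case $n = 2$ being already established by the explicit quadratic root formula preceding the statement. For the inductive step, I would handle the two conclusions \eqref{meishi} and \eqref{mei-23-shi} separately, relying on three ingredients: the Rayleigh quotient, Cauchy's interlacing theorem, and the secular equation obtained by expanding $\det(\lambda I - A)$ along the last row.

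First, pairing $A$ with the standard basis vector $e_n$ gives $\lambda_n \ge e_n^{*} A e_n = \mathbf{a}$, which is the lower bound in \eqref{mei-23-shi}. Next, order the fixed diagonal entries as $d_{(1)} \le \cdots \le d_{(n-1)}$; Cauchy interlacing applied to the principal submatrix $A' = \operatorname{diag}(d_1, \ldots, d_{n-1})$ places $\lambda_1 \le d_{(1)}$ and $\lambda_\alpha \in [d_{(\alpha-1)}, d_{(\alpha)}]$ for $2 \le \alpha \le n-1$, so each of $\lambda_1, \ldots, \lambda_{n-1}$ already sits in an interval whose endpoints come from the $d_i$'s.

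For the refined localization in \eqref{meishi}, I would analyze the secular equation: expansion along the last row yields, for $\lambda \notin \{d_i\}$,
\[
\det(\lambda I - A) \;=\; \prod_{i=1}^{n-1}(\lambda - d_i) \cdot g(\lambda), \qquad g(\lambda) := (\lambda - \mathbf{a}) - \sum_{i=1}^{n-1}\frac{|a_i|^2}{\lambda - d_i},
\]
and $g$ is strictly increasing between consecutive poles since $g'(\lambda) = 1 + \sum_i |a_i|^2/(\lambda - d_i)^2 > 0$. Writing a candidate root in $[d_{(k-1)}, d_{(k)}]$ (or $(-\infty, d_{(1)}]$ when $k=1$) as $\lambda = d_{(k)} - \mu$ with $\mu \ge 0$, the pole term $|a_{i_k}|^2/\mu$ dominates, and the root satisfies, to leading order, $\mu \lesssim |a_{i_k}|^2 / (\mathbf{a} - d_{(k)})$. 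The hypothesis \eqref{guanjian2}, combined with the elementary observation that $d_i + (n-2)|d_i| \ge 0$, delivers $\mathbf{a} - d_{(k)} \ge \tfrac{1}{\epsilon}\sum_j |a_j|^2$, from which $\mu < \epsilon$ follows after absorbing the subdominant terms; this is \eqref{meishi}.

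Finally, the trace identity $\operatorname{tr}(A) = \mathbf{a} + \sum_\alpha d_\alpha = \lambda_n + \sum_\alpha \lambda_\alpha$ rearranges into $\lambda_n - \mathbf{a} = \sum_\alpha (d_\alpha - d_{i_\alpha}) + \sum_\alpha (d_{i_\alpha} - \lambda_\alpha)$, and the triangle inequality with $|d_{i_\alpha} - \lambda_\alpha| < \epsilon$ from \eqref{meishi} yields the upper bound in \eqref{mei-23-shi}. The main obstacle I expect lies in executing the secular-equation analysis cleanly when several of the $d_i$'s (nearly) coincide: the clean pole-by-pole picture degenerates, and multiple eigenvalues can cluster near a single diagonal value. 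This is precisely why the lemma permits the index $i_\alpha$ to repeat, rather than demanding a bijective matching as in Lemma \ref{yuan's-quantitative-lemma}. The natural remedy is a short perturbation step: replace the $d_i$'s by slightly distinct values, prove \eqref{meishi} in that simpler setting, and pass to the limit via continuity of the eigenvalues in the matrix entries.
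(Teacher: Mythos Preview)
Your overall strategy is sound and shares the core ingredients with the paper's proof --- the Rayleigh bound $\lambda_n\ge\mathbf{a}$, the characteristic/secular identity, and the trace identity for \eqref{mei-23-shi} --- but the organization differs. You invoke Cauchy interlacing to pre-localize $\lambda_1,\ldots,\lambda_{n-1}$ between consecutive $d_{(k)}$'s and then analyze the secular equation pole-by-pole, whereas the paper bypasses interlacing entirely: it partitions $\{1,\ldots,n-1\}$ into a ``good'' set $\mathbf{G}=\{\alpha:|\lambda_\alpha-d_i|<\epsilon\text{ for some }i\}$ and a ``bad'' set $\mathbf{B}$, shows from the characteristic identity that every bad eigenvalue satisfies $\lambda_\alpha\ge\mathbf{a}-\tfrac{1}{\epsilon}\sum|a_i|^2$, and then obtains a contradiction with $\operatorname{tr}(A)=\mathbf{a}+\sum d_i$ if $\mathbf{B}\ne\emptyset$. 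This global trace argument handles repeated $d_i$'s automatically, so no perturbation step is needed; your induction framing is likewise superfluous, since nothing in the argument actually appeals to smaller $n$.

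Your secular-equation step, as written, has a genuine gap: the assertion that ``the pole term $|a_{i_k}|^2/\mu$ dominates'' already presupposes $\mu$ is small compared to the distances $|\lambda-d_{(j)}|$ for $j\ne k$, which is precisely what you are trying to prove, so the reasoning is circular. The clean repair is to argue by contradiction instead of by direct estimate: if $|\lambda_\alpha-d_i|\ge\epsilon$ for every $i$, the secular identity gives $|\lambda_\alpha-\mathbf{a}|\le\tfrac{1}{\epsilon}\sum|a_i|^2$, hence $\lambda_\alpha\ge\mathbf{a}-\tfrac{1}{\epsilon}\sum|a_i|^2$. Combined with your interlacing bound $\lambda_\alpha\le d_{(n-1)}$ and the hypothesis \eqref{guanjian2} (which, using $d_i+(n-2)|d_i|\ge0$ for $n\ge3$, forces $\mathbf{a}-\tfrac{1}{\epsilon}\sum|a_i|^2>d_{(n-1)}$), this is already a contradiction --- indeed a slightly shorter one than the paper's, since one bad eigenvalue suffices and the full trace sum is not needed. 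Once repaired this way, the perturbation for coincident $d_i$'s also becomes unnecessary.
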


 	\begin{proof}
 		%[Proof of Lemma \ref{refinement}]
 		Without loss of generality, we assume $\sum_{i=1}^{n-1} |a_i|^2>0$ and  $n\geq 3$
 		(otherwise we are done, since $\mathrm{A}$ is diagonal or $n=2$).
 		%If $\sum_{i=1}^{n-1} |a_i|^2=0$, then $A$ is diagonal, and  we are done.
 		%Suppose now $\sum_{i=1}^{n-1} |a_i|^2>0$.
 		Note that in the assumption of the lemma the eigenvalues have
 		the order $\lambda_1\leq \lambda_2\leq \cdots \leq \lambda_n$.
 		It is  well known that, for a Hermitian matrix,
 		any diagonal element is   less than or equals to   the  largest eigenvalue.
 		In particular,
 		\begin{equation}
 			\label{largest-eigen1}
 			\lambda_n \geq \mathrm{{\bf a}}.
 		\end{equation}
 		
 		It only requires to prove \eqref {meishi}, since  \eqref{mei-23-shi} is a consequence of  \eqref{meishi}, \eqref{largest-eigen1}  and
 		\begin{equation}
 			\label{trace}
 			\sum_{i=1}^{n}\lambda_i=\mbox{tr}(\mathrm{A})=\sum_{\alpha=1}^{n-1} d_{\alpha}+\mathrm{{\bf a}}.
 		\end{equation}

 		Let's denote   $I=\{1,2,\cdots, n-1\}$. We divide the index set   $I$ into two subsets  by
 		$${\bf B}=\{\alpha\in I: |\lambda_{\alpha}-d_{i}|\geq \epsilon, \mbox{   }\forall i\in I\} $$
 		and $ {\bf G}=I\setminus {\bf B}=\{\alpha\in I: \mbox{There exists an $i\in I$ such that }
 		|\lambda_{\alpha}-d_{i}| <\epsilon\}.$
 		
 		To complete the proof we need to prove ${\bf G}=I$ or equivalently ${\bf B}=\emptyset$.
 		It is easy to see that  for any $\alpha\in {\bf G}$, one has
 		\begin{equation}
 			\label{yuan-lemma-proof1}
 			\begin{aligned}
 				|\lambda_\alpha|< \sum_{i=1}^{n-1}|d_i| + \epsilon.
 			\end{aligned}
 		\end{equation}
 		
 		Fix $ \alpha\in {\bf B}$,  we are going to %give the estimate for 
 		estimate $\lambda_\alpha$.
 		%Let  $\lambda$ be an eigenvalue of $A$,  then
 		%Since   $\lambda_{\alpha}$ is an eigenvalue of $A$,  one has
 		The eigenvalue $\lambda_\alpha$ satisfies
 		\begin{equation}
 			\label{characteristicpolynomial}
 			\begin{aligned}
 				(\lambda_{\alpha} -\mathrm{{\bf a}})\prod_{i=1}^{n-1} (\lambda_{\alpha}-d_i)
 				= \sum_{i=1}^{n-1} (|a_{i}|^2 \prod_{j\neq i} (\lambda_{\alpha}-d_{j})).
 			\end{aligned}
 		\end{equation}
 		By the definition of ${\bf B}$, for  $\alpha\in {\bf B}$, one then has $|\lambda_{\alpha}-d_i|\geq \epsilon$ for any $i\in I$.
 		We therefore derive
 		\begin{equation}
 			\begin{aligned}
 				|\lambda_{\alpha}-\mathrm{{\bf a}} |=  \left|\sum_{i=1}^{n-1} \frac{|a_i|^2}{\lambda_{\alpha}-d_{i}}\right|\leq\sum_{i=1}^{n-1} \frac{|a_i|^2}{|\lambda_{\alpha}-d_{i}|}\leq
 				\frac{1}{\epsilon}\sum_{i=1}^{n-1} |a_i|^2, \mbox{ if } \alpha\in {\bf B}.
 			\end{aligned}
 		\end{equation}
 		%Here we use $\sum_{i=1}^{n-1} |a_i|^2>0$.
 		Hence,  for $\alpha\in {\bf B}$, we obtain
 		\begin{equation}
 			\label{yuan-lemma-proof2}
 			\begin{aligned}
 				\lambda_\alpha \geq \mathrm{{\bf a}}-\frac{1}{\epsilon}\sum_{i=1}^{n-1} |a_i|^2.
 			\end{aligned}
 		\end{equation}
 		%Furthermore, $\lambda_n \geq \mathrm{{\bf a}}-\frac{1}{\epsilon}\sum_{i=1}^{n-1} |a_i|^2$,
 		%as one assumes   $\lambda_n\geq \lambda_\alpha$ in the lemma.

 		We shall use proof by contradiction to prove  ${\bf B}=\emptyset$.
 		For a set ${\bf S}$, we denote $|{\bf S}|$ the  cardinality of ${\bf S}$.
 		Assume ${\bf B}\neq \emptyset$.
 		Then $|{\bf B}|\geq 1$, and so $|{\bf G}|=n-1-|{\bf B}|\leq n-2$. % and we will prove that it is a contradiction.

 		In the case of ${\bf G}\neq \emptyset$, we compute the trace of the matrix $A$ as follows:
 		\begin{equation}
 			\begin{aligned}
 				\mbox{tr}(\mathrm{A})=\,&
 				\lambda_n+
 				\sum_{\alpha\in {\bf B}}\lambda_{\alpha} + \sum_{\alpha\in  {\bf G}}\lambda_{\alpha}\\
 				> \,&
 				\lambda_n+
 				|{\bf B}| (\mathrm{{\bf a}}-\frac{1}{\epsilon}\sum_{i=1}^{n-1} |a_i|^2 )-|{\bf G}| (\sum_{i=1}^{n-1}|d_i|+\epsilon ) \\
 				\geq \,&
 				2\mathrm{{\bf a}}-\frac{1}{\epsilon}\sum_{i=1}^{n-1} |a_i|^2 -(n-2) (\sum_{i=1}^{n-1}|d_i|+\epsilon )
 				\\
 				%\geq\,& \sum_{i=1}^{n-1} |d_i|+\mathrm{{\bf a}} \\
 				\geq \,& \sum_{i=1}^{n-1}d_i +\mathrm{{\bf a}}= \mbox{tr}(\mathrm{A}),
 				% \mbox{ provided that \eqref{guanjian1} holds,}
 			\end{aligned}
 		\end{equation}
 		where we use  \eqref{guanjian2},   \eqref{largest-eigen1}, \eqref{yuan-lemma-proof1} and \eqref{yuan-lemma-proof2}.
 		This is a contradiction.
 		
 		In the case of ${\bf G}=\emptyset$, one knows that
 		\begin{equation}
 			\begin{aligned}
 				\mbox{tr}(\mathrm{A})
 				%=\,&\lambda_n+\sum_{\alpha\in  {\bf G}}\lambda_{\alpha}+\sum_{\alpha\in {\bf B}}\lambda_{\alpha}\\
 				%> \,&
 				%(|{\bf B}|+1) (\mathrm{{\bf a}}-\frac{1}{\epsilon}\sum_{i=1}^{n-1} |a_i|^2 )-|{\bf G}| (\sum_{i=1}^{n-1}|d_i|+\epsilon )\\
 				\geq
 				\mathrm{{\bf a}}+
 				(n-1) (\mathrm{{\bf a}}-\frac{1}{\epsilon}\sum_{i=1}^{n-1} |a_i|^2 )
 				>   \sum_{i=1}^{n-1}d_i +\mathrm{{\bf a}}= \mbox{tr}(\mathrm{A}).
 				% \mbox{ provided that \eqref{guanjian1} holds,}
 			\end{aligned}
 		\end{equation}
 		Again, it is a contradiction.  Thus  ${\bf B}=\emptyset$ as required. 
 		
 		%We now prove ${\bf B}=\emptyset$.
 		%Therefore,   ${\bf G}=I$ and  the proof is complete.
 	\end{proof}
 	
 	We apply Lemma \ref{refinement} to prove Lemma \ref{yuan's-quantitative-lemma} via a deformation argument.

 	\begin{proof}
 		[Proof of Lemma \ref{yuan's-quantitative-lemma}]
 		%The proof is based on Lemma  \ref{refinement} and a deformation argument.
 		Without loss of generality,  we assume $n\geq 3$ and  $\sum_{i=1}^{n-1} |a_i|^2>0$
 		(otherwise  $n=2$ or the matrix $\mathrm{A}$ is diagonal, and then we are done).
 		Fix $a_1, \cdots, a_{n-1}$,
 		$d_1, \cdots, d_{n-1}$. 
 		Denote $\lambda_1(\mathrm{{\bf a}}), \cdots, \lambda_n(\mathrm{{\bf a}})$ by
 		the eigenvalues of $\mathrm{A}$ with
 		the order  $\lambda_1(\mathrm{{\bf a}})\leq \cdots\leq \lambda_n(\mathrm{{\bf a}})$. 
 		Clearly,  the eigenvalues $\lambda_i(\mathrm{{\bf a}})$ are all continuous functions 
 		in $\mathrm{{\bf a}}$.
 		For simplicity, we write $\lambda_i=\lambda_i(\mathrm{{\bf a}})$. 
 		%Without loss of generality, we may assume
 		% $$n\geq 3, \mbox{ } \sum_{i=1}^{n-1} |a_i|^2>0,  \mbox{ }  d_1\leq d_2\leq \cdots \leq d_{n-1}  \mbox{ and } \lambda_1\leq \lambda_2 \leq \cdots \lambda_{n-1}\leq \lambda_n.$$
 		%With the notation in the proof of Lemma \ref{refinement111}.

 		%With the same notation in the proof of Lemma \ref{refinement111}.
 		Fix $\epsilon>0$.  Let $I'_\alpha=(d_\alpha-\frac{\epsilon}{2n-3}, d_\alpha+\frac{\epsilon}{2n-3})$ and
 		$$P_0'=\frac{2n-3}{\epsilon}\sum_{i=1}^{n-1} |a_i|^2+ (n-1)\sum_{i=1}^{n-1} |d_i|+ \frac{(n-2)\epsilon}{2n-3}.$$
 		In what follows we assume  $\mathrm{{\bf a}}\geq P_0'$ (i.e. \eqref{guanjian1-yuan} holds).
 		The connected components of $\bigcup_{\alpha=1}^{n-1} I_{\alpha}'$ are as in the following:
 		$$J_{1}=\bigcup_{\alpha=1}^{j_1} I_\alpha',
 		J_2=\bigcup_{\alpha=j_1+1}^{j_2} I_\alpha'  \cdots, J_i =\bigcup_{\alpha=j_{i-1}+1}^{j_i} I_\alpha', \cdots, 
 		J_{m} =\bigcup_{\alpha=j_{m-1}+1}^{n-1} I_\alpha'.$$
 	%	(Here we denote $j_0=0$ and $j_m=n-1$).
 		Moreover
 		\begin{equation}
 			\begin{aligned}
 				J_i\bigcap J_k=\emptyset, \mbox{ for }   1\leq i<k\leq m. \nonumber
 			\end{aligned}
 		\end{equation}
 		%It  plays formally the role of \eqref{daqin122} in the proof of Lemma \ref{refinement111}.
 		
 		% As in the proof of Lemma \ref{refinement111},
 		% we define the function with the variable $\mathrm{{\bf a}}$ as follows:
 		%$$ \mathrm{{\bf \widetilde{Card}}}_k:=|\{1\leq i\leq n: \lambda_i \in J_k\}|:[P_0',+\infty)\rightarrow \mathbb{N}.$$
 		Let  $$ \mathrm{{\bf \widetilde{Card}}}_k:[P_0',+\infty)\rightarrow \mathbb{N}$$
 		be the function that counts the eigenvalues which lie in $J_k$.
 		(Note that when the eigenvalues are not distinct,  the function $\mathrm{{\bf \widetilde{Card}}}_k$ denotes  the summation of all the algebraic  multiplicities of  distinct eigenvalues which
 		lie in $J_k$).
 		This function measures the number of the  eigenvalues which lie in $J_k$.
 		
 		The crucial ingredient is that  Lemma \ref{refinement}  yields the continuity of   $\mathrm{{\bf \widetilde{Card}}}_i(\mathrm{{\bf a}})$ for $\mathrm{{\bf a}}\geq P_0'$. More explicitly,
 		by using  Lemma \ref{refinement} and  $$\lambda_n \geq {\bf a}\geq P_0'>\sum_{i=1}^{n-1}|d_i|+\frac{\epsilon}{2n-3}$$ we conclude that
 		if   $\mathrm{{\bf a}}$ satisfies the quadratic growth condition \eqref{guanjian1-yuan} then
 		\begin{equation}
 			\label{yuan-lemma-proof5}
 			\begin{aligned}
 				\,& \lambda_n \in \mathbb{R}\setminus (\bigcup_{k=1}^{n-1} \overline{I_k'})
 				=\mathbb{R}\setminus (\bigcup_{i=1}^m \overline{J_i}), \\
 				\,& \lambda_\alpha \in \bigcup_{i=1}^{n-1} I_{i}'=\bigcup_{i=1}^m J_{i} \mbox{ for } 1\leq\alpha\leq n-1.
 				%  \sum_{i=1}^m  \mathrm{{\bf \widehat{Card}}}_i(\mathrm{{\bf a}})=n-1.
 			\end{aligned}
 		\end{equation}
 		%  where we also use \eqref{largest1}.
 		Hence,  $\mathrm{{\bf \widetilde{Card}}}_i(\mathrm{{\bf a}})$ is a continuous function
 		in the variable $\mathrm{{\bf a}}$. So it is a constant.
 		Together with  the line of the proof   of \cite[Lemma 1.2]{CNS3}
 		we see
 		that $ \mathrm{{\bf \widetilde{Card}}}_i(\mathrm{{\bf a}}) =j_i-j_{i-1}$ for sufficiently large $\mathrm{{\bf a}}$.
 		Here we denote $j_0=0$ and $j_m=n-1$.
 		%It follows from    Lemma \ref{refinement3}, \eqref{yuan-lemma-proof5} and
 		%  the continuity of the function $ \mathrm{{\bf \widehat{Card}}}_i$ that
 		%  $\lim_{\mathrm{{\bf a}}\rightarrow +\infty} \mathrm{{\bf \widehat{Card}}}_i(\mathrm{{\bf a}})=j_i-j_{i-1}$, and so
 		%$\mathrm{Card}_i(a) \geq j_i-j_{i-1}$. Moreover, applying $\sum_{i=1}^m \mathrm{Card}_i(a)=n-1$  we have
 		%Combining it with  Lemma \ref{refinement3} and \eqref{yuan-lemma-proof5} we can derive
 		The constant of $ \mathrm{{\bf \widetilde{Card}}}_i$  therefore follows that
 		$$ \mathrm{{\bf \widetilde{Card}}}_i(\mathrm{{\bf a}})
 		%=\lim_{\mathrm{{\bf a}}\rightarrow +\infty} \mathrm{{\bf \widetilde{Card}}}_i(\mathrm{{\bf a}})
 		=j_i-j_{i-1}.$$
 		%Combining it with \eqref{yuan-lemma-proof5}, we
 		We thus know that the   $(j_i-j_{i-1})$ eigenvalues
 		$$\lambda_{j_{i-1}+1}, \lambda_{j_{i-1}+2}, \cdots, \lambda_{j_i}$$
 		lie in the connected component $J_{i}$.
 		Thus, for any $j_{i-1}+1\leq \gamma \leq j_i$,  we have $I_\gamma'\subset J_i$ and  $\lambda_\gamma$
 		lies in the connected component $J_{i}$.
 		Therefore,
 		$$|\lambda_\gamma-d_\gamma| < \frac{(2(j_i-j_{i-1})-1) \epsilon}{2n-3}\leq \epsilon.$$
 		Here we also use the fact that $d_\gamma$ is midpoint of  $I_\gamma'$ and 
 		every $J_i\subset \mathbb{R}$ is an open subset.
 		
 		To be brief,  if for fixed index $1\leq i\leq n-1$ the eigenvalue $\lambda_i(P_0')$ lies in $J_{\alpha}$ for some $\alpha$, 
 		then  Lemma \ref{refinement} implies that, for any ${\bf a}>P_0'$, the corresponding eigenvalue  $\lambda_i({\bf a})$ lies in the same  interval $J_{\alpha}$.
 		%Adapting the line of the proof of \cite[Lemma 1.2]{CNS3} to our context, we get the asymptotic behavior as $\mathrm{\bf a}$ goes to infinity.
 		The computation of $\mathrm{{\bf \widetilde{Card}}}_k$ can be done by letting $\mathrm{\bf a}\rightarrow+\infty$.
 		
 		%To be brief,  if for fixed index $1\leq i\leq n-1$ the eigenvalue $\lambda_i(P_0')$ lies in $J_{\alpha}$ for some $\alpha$, 
 		%then  Lemma \ref{refinement} implies that, for any ${\bf a}>P_0'$, $\lambda_i({\bf a})\in J_{\alpha}$.
 		%Adapting the line of the proof Lemma 1.2  of Caffarelli-Nirenberg-Spruck \cite{CNS3} to our context,
 		% we get the asymptotic behavior as $\mathrm{\bf a}$ goes to infinity.
 		
 	\end{proof}

 \end{appendix}

\subsection*{Acknowledgements} 
%\noindent {\bf Acknowledgement} 
The author was supported by the National Natural Science of Foundation of China, Grant No. 11801587.

\end{document}